\newcommand{\be}{\begin{eqnarray*}}
	\newcommand{\en}{\end{eqnarray*}}
\newcommand{\bes}{\begin{eqnarray}}
	\newcommand{\ens}{\end{eqnarray}}
\def\nn{\nonumber}
\newcommand{\ep}{\epsilon}
\newcommand {\norm}[1] {\| #1 \|}
\newtheorem{theorem}{Theorem}[section]
\newtheorem{lemma}{Lemma}[section]
\newtheorem{remark}{Remark}[section]
\def\bq{\begin{equation}}
	\def\eq{\end{equation}}
\def\bqq{\begin{eqnarray*}}
	\def\eqq{\end{eqnarray*}}
\def\nn{\nonumber}
	\title[ 	Regularization of  sideways problem for  a  time fractional diffusion equation with nonlinear source ]{	Regularization of  sideways problem for  a  time fractional diffusion equation with nonlinear source  }
\author[T.B. Ngoc]{ Tran Bao Ngoc}
		\address[T.B. Ngoc]{Department of Mathematical Economics, Banking University of Ho Chi Minh City, Vietnam }
		\address{Department of Mathematics and Computer Science, VNUHCM-University of Science\\ 227 Nguyen Van Cu Str., Dist. 5, Ho Chi Minh City, Vietnam}
		\email{	 tranbaongoc@hcmuaf.edu.vn }
	\author[N.H. Tuan]{Nguyen Huy Tuan}
	\address[N.H. Tuan]{ Applied Analysis Research Group
		Faculty of Mathematics and Statistics
		Ton Duc Thang University
		Ho Chi Minh City, Viet Nam  }
	\email{nguyenhuytuan@tdt.edu.vn}
		\author[M. Kirane]{ Mokhtar Kirane}
	\address[M. Kirane] {Lasie, Facult\'e des Sciences et Technologies, Universit\'e de La Rochelle, Avenue M. Cr\'epeau, La Rochelle, Cedex 17042, France}
\begin{document}
		
		\begin{abstract}
			 In this paper, we consider an inverse problem for a time-fractional diffusion equation with a nonlinear source.  We prove that the considered problem is ill-posed, i.e. the solution does not depend continuously on the data. The problem is ill-posed in the sense of Hadamard. Under some weak {\color{black} a} priori assumptions on the sought solution, we propose a new regularization method for stabili{\color{black}z}ing the ill-posed problem.
			We also provide a numerical example to illustrate our results. 
		\end{abstract}
		
		\maketitle
		\noindent{\it Keywords:}
	Ill-posed, Regularization method, Caputo's  fractional derivatives, Fourier transform.
	
%		\tableofcontents

	\section{Introduction} \thispagestyle{empty}

		In this article, we consider the following concentration identification problem (CIP) for
	the time fractional nonlinear diffusion equation:
	\begin{align}
		D^\alpha_t u(x,t) - u_{xx}(x,t)& =f(x,t,u(x,t)), \qquad x > 0,\quad t>0, \qquad  0<\alpha<1, \label{problem1}
	\end{align}
	with the Cauchy condition and initial condition
	\begin{align}
		u(x,0)&=0, \hspace*{1.65cm} x \ge 0, \label{problem2}\\
		u(0,t)&=g(t), \hspace*{1.3 cm}  t \ge 0, \label{problem3}\\
		u_x(0,t)&=h(t), \hspace*{1.3cm} t \ge 0, \label{problem4}
	\end{align}
	where  $ u$ is the solute concentration (see \cite{zheng}), $f $ is the  source term   defined later. 
	The function  $g(t)$ and $h(t)$ denote  the solute concentration and the measurement datum of dispersion flux, respectively, on {\color{black} the} left boundary.  We will recover the solute concentration
	$u(x, t)$  in the region  $\{ (x,t),~0\le x < 1, t>0 \}$  from the measurement data of source
	terms $ f (x,t,u(x,t))$ and boundary concentrations $g(t),~h(t)$.  The fractional derivative $\displaystyle \frac{\partial ^\alpha u}{\partial t^\alpha}$ is the Caputo fractional derivative of order $\alpha$  defined by  \cite{Podlubny}
	\begin{align}
		\frac{\partial ^\alpha u}{\partial t^\alpha}(t)& =  \frac{1}{\Gamma(1-\alpha)} \int_0^t \frac{\partial  u(x,s)}{\partial s} \frac{ds}{(t-s)^\alpha} \quad \textrm{ for } \quad 0< \alpha < 1,\\
		\frac{\partial^\alpha  u}{\partial t^\alpha}(t)&= \frac{\partial u}{\partial t}, \quad \textrm{ for } \quad \alpha =1, 
	\end{align}
	where $\Gamma(.)$ is the Gamma function. Problem \eqref{problem1}-\eqref{problem4} is an  ill-posed inverse problem (see Lemma 2.1);  the solution does not depend continuously on the given data, i.e.,  any small perturbation in the given data may cause a large change to the solution. The sideway{\color{black}s} problem with classical derivative has been considered by many authors.   In 1995, Teresa Regiflska \cite{Regi}   solved a   sideways heat problem which consists in applying {\color{black} the} wavelet basis decomposition of measured data in the quarter plane ($x\ge 0$, $t\ge 0$).  In 1999, F. Berntsson \cite{Bern}   investigated  the following sideways heat equation	
	\begin{align}
		ku_{xx} & =u_t, \qquad 0<x <1,\quad t\ge 0, \nn\\
		u(1,t)&=g(t), \hspace*{0.5cm} t \ge 0, \nn\\
		u_x(1,t)&=h(t), \hspace*{0.5 cm}  t \ge 0, \nn\\
		u(x,0)&=0, \hspace*{0.95cm} 0<x<1, \nn
	\end{align}	
	where $g$, $h$ were the functions be defined later. He tried to use the spectral method to determine the temperature $u(x,t)$ for $0\le x<1$ from temperature measurements $g=u(1,.)$ and heat-flux measurements $h=u_x(1,.)$. Later on, in 2010, T. Wei  \cite{Wei1}  proposed a spectral regularization method for  the following time fractional advection-dispersion equation 	
	\begin{align}
		{}_0D^\alpha_t u + bu_x =au_{xx} & , \qquad 0<x <L,\quad t> 0, \nn\\
		u(0,t)&=f(t), \hspace*{0.5cm} t \ge 0, \nn\\
		u_x(0,t)&=g(t), \hspace*{0.5 cm}  t \ge 0, \nn\\
		u(x,0)&=0, \hspace*{0.95cm} 0<x<L. \nn
	\end{align}
	where $f$, $g$ were the functions be defined later.

	Recently, the homogeneous problem, i.e, $f \equiv 0$ in Eq. \eqref{problem1} has been considered by some authors, for example{\color{black},} see \cite{Wei1,  Wei4, Wei5, Xiong2}. Although there are many papers on the homogeneous case of the identification problem, the inhomogeneous case has not been intensively investigated. The inhomogeneous case is first studied by G.H. Zheng \cite{zheng}. Very recent, Tuan and his group \cite{Tuan} have studied {\color{black}a} more general case of inhomogeneous source term in the form $f(x,t)$. 	Until now, to our knowledge, the problem  \eqref{problem1}-\eqref{problem4} with a generalized source term  $f(x,t,u)$  has not been studied. Obviously, the nonlinear
problem is much more challenging.   In case of the homogeneous problem, we can transform the solution $u$ into the linear equation
	\begin{equation}
	u(1,t)= \mathbb{T} (g,h){\color{black},} \label{linearoperator}
	\end{equation}
    where $\mathbb{T}$ is {\color{black}a} linear unbounded operator. {\color{black} Then, there are} many choices of stability term for regularization that have been proposed. The main idea is to replace the operator $ \mathbb{T} $ with a class of linear bounded ope{\color{black}ra}tor.  However,  when the right {\color{black} handside } of \eqref{problem1} depend on $u$,  it is impossible to represent $u$ as \eqref{linearoperator}. Thus, the techniques and methods in previous papers on the homogeneous case cannot be applied directly to solve the {\color{black} nonlinear } inhomogeneous problem.  Now, we describe our new ideas for the nonlinear inhomogeneous problem. The sought solution of  Problem  \eqref{problem1}-\eqref{problem4}  can be represented by a nonlinear integral equation containing some instability terms, see  
    \eqref{exsolution}. Our main objectives are    to find a suitable integral equation for approximating the
    exact solution by replacing the instability terms with regularization terms and then  show that the solution of regularized problem
    converges to the exact solution. Notice that the following sideway{\color{black}s} problem
for time fractional diffusion equation 
	\begin{equation} \label{problemm}
	\begin{gathered}
	-  u_x(x,t)= D^\alpha_tu(x,t) + f(x,t,u(x,t)), \quad x > 0,\; t>0,\\
	u(1,t)=g(t), \quad t \ge 0, \\
	\lim_{x\to +\infty} u(x,t) =u(x,0)=0, \quad t \ge 0,
	\end{gathered}
	\end{equation}
	has been studied by M. Kirane et al \cite{Tuan2}. Our problem \eqref{problem1}-\eqref{problem4} is more complicated than   \eqref{problemm} since there are two {\color{black} boundary } functions in \eqref{problem3}-\eqref{problem4} to be investigated. Moreover, in this paper, we first give the convergence rate in  $H^p$ norm  which  is not considered in  \cite{Tuan2} and some previous {\color{black} papers} \cite{Tuan,Wei1,  Wei4, Wei5,  Xiong2} .

The outline of this work is as follows. In Section 2, we present the ill-posedness
of the problem.   In Section 3, we  propose our  regularization method and  convergence estimates for the regularized solution and the sought solution   are given in both $L^2$- and $H^p$-norm based on the {\color{black} a} priori assumptions.  Finally, in
Section 4 we implement a numerical example to illustrate the theoretical results.

	\section{Ill-posedness of the nonlinear problem }
	
	\subsection{The mild solution of  Problem \eqref{problem1}-\eqref{problem4} }
	
	\noindent To apply the Fourier transform, thanks to \cite{Chorfi}, we extend all functions in this paper to the
	whole line $-\infty < t < +\infty$ by defining them to be zero for $t<0$. 
	The Fourier transform of $L^2(\mathbb{R})$ function $v(t)$ $(-\infty < t <\infty)$ is defined by 
	$$\displaystyle \mathcal{F} (v)(\omega):= \widehat{v}(\omega):=\frac{1}{\sqrt{2\pi}}\int_{-\infty}^{\infty} v(t)e^{-i\omega t}dt, \quad \omega \in \mathbb{R} .$$ 
	We denote by $\norm{.}_{L^2(\mathbb{R})}$   the $L^2(\mathbb{R})$ norm, i.e.,
	$$\displaystyle\norm{v}_{L^2(\mathbb{R})} =\left( \int_{\mathbb{R}} |v(\omega)|^2d\omega \right)^{\frac{1}{2}},$$ and by $\displaystyle\norm{.}_{H^p(\mathbb{R})}$   the $H^p(\mathbb{R})$ norm, i.e.,
$$\displaystyle\norm{v}_{H^p(\mathbb{R})}=\left( \int_{\mathbb{R}} (1+\omega^2)^p|\widehat v(\omega)|^2d\omega \right)^{\frac{1}{2}}.$$
Note that when $p=0$, $H^p(\mathbb{R})= H^0(\mathbb{R})=L^2(\mathbb{R})  $. \\
 
	Applying Fourier transform with respect to variable $t$ to the problem (\ref{problem1}) - (\ref{problem4}), we obtain  the following second order differential equation
	\begin{equation}
		\begin{cases}
			\begin{array}{llll}
				(i\omega)^\alpha \, \widehat{u}(x,\omega) - \hat{u}_{xx}(x,\omega)&=\,\,\widehat{f}(x,\omega,u(x,\omega)), & & x > 0, \omega \in \mathbb{R},\\
				\widehat{u}(0,\omega)&= \,\, \widehat{g}(\omega), & & \omega\in\mathbb{R},\\
				\widehat{u}_x(0,\omega)&= \,\, \widehat{h}(\omega), & & \omega\in\mathbb{R}.\\
			\end{array}
		\end{cases}\label{1111}
	\end{equation}
	
Put 
	\begin{align}k(\omega):=(i\omega)^{\frac{\alpha}{2}}= \Re(k(\omega)) +i\Im(k(\omega)) ,
	\end{align}
where the real   and  image parts of $ k(\omega)$  are respectively
	\begin{align}
		\Re(k(\omega))  := |\omega|^{\frac{\alpha}{2}} \cos{\frac{\alpha \pi}{4}},\quad \Im(k(\omega))= |\omega|^{\frac{\alpha}{2}}\textrm{sign}(\omega)\sin{\frac{\alpha \pi}{4}} . \label{rexi} 
	\end{align} 	
	Multiplying the first equation of  \eqref{1111} by $\displaystyle \frac{ \sinh {\Big (k(\omega)(x-z)\Big) }  }{k(\omega)}$    and integrating two sides on $[0;x]$, we derive
	\begin{align}
	\int_0^x \Big(  	(i\omega)^\alpha \, \widehat{u}(z,\omega) - \hat{u}_{zz}(z,\omega) \Big) \frac{ \sinh {\Big (k(\omega)(x-z)\Big) }  }{k(\omega)}dz   = \int_0^x \frac{ \sinh {\Big (k(\omega)(x-z)\Big) }  }{k(\omega)} \widehat{f}(z,\omega,u(z,\omega))  dz. \label{e222}
	\end{align}
	By applying integration by parts to the left   side of \eqref{e222}, and combining the second and third equation of \eqref{1111}, we obtain
	\begin{align}
		 \widehat{u}(x,\omega)&= \cosh \Big(k(\omega) x\Big) \widehat{g}(\omega) +  \frac{\sinh \big(k(\omega) x\Big)}{k(\omega)} \widehat{h}(\omega) \nn\\
		& -    \int_0^x \frac{ \sinh {\Big (k(\omega)(x-z)\Big) }  }{k(\omega)} \widehat{f}(z,\omega,u(z,\omega))  dz,  \label{solution3} 
	\end{align}
	for $x\ge 0$, $\omega\in\mathbb{R}$. Applying the inverse Fourier transform to (\ref{solution3}), we have
	\begin{align} 
		u(x,t)=& \frac{1}{\sqrt{2\pi}}\int_{-\infty}^{+\infty}{ 
			\cosh \Big(k(\omega) x\Big) \hat{g}(\omega)  e^{i\omega t}d\omega} +
		\frac{1}{\sqrt{2\pi}}\int_{-\infty}^{+\infty}{ \frac{\sinh \Big(k(\omega) x\Big)}{k(\omega)} \widehat{h}(\omega) e^{i\omega t}d\omega} \nn \\
		&-    \frac{1}{\sqrt{2\pi}}\int_{-\infty}^{+\infty}\int_0^x  \frac{ \sinh {\Big (k(\omega)(x-z)\Big) }  }{k(\omega)} \widehat{f}(z,\omega,u(z,\omega))e^{i\omega t}dzd\omega   , \label{exsolution}
	\end{align}
	for $x\ge 0$, $t\in\mathbb{R}$. \\
	Note that the  real part of $ k(\omega)$  is an increasing positive function of $\omega$.  Therefore the terms $$\cosh\Big((k(\omega) x\Big), \frac{\sinh\Big((k(\omega) x\Big)}{|k(\omega)|}$$ 
	increase  rather quickly when $|\omega| \to \infty$
	: small errors in high-frequency components
	can blow up and completely destroy the solution for $0<x<1$ .  Therefore, the problem is severely ill-posed {\color{black}and} regularization methods are required for finding the approximate solution of  our problem. 
	
		\subsection{Some notations}
		We prove the following lemma which will be important to obtain {\color{black} the} main results.
		\begin{lemma}\label{ineqlemma} 
			Let $\Re(z)$ be  the real part of any complex number $z$.  If $\Re(z) >0$ then we have
			\begin{itemize}
				\item[a)] $\displaystyle |\cosh (z)|  \le e^{\Re{(z)}}$; \vspace*{0.2cm}
				\item[b)] $\displaystyle \left|\frac{\sinh (\lambda z)}{z}\right| \le \lambda e^{\lambda \Re(z)}$.
			\end{itemize}	
		\end{lemma}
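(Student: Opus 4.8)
The plan is to handle the two estimates separately, deriving (b) from (a) by means of an integral representation that absorbs the division by $z$.

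For part (a), I would start from the identity $\cosh(z)=\tfrac12\bigl(e^{z}+e^{-z}\bigr)$ and apply the triangle inequality together with $|e^{w}|=e^{\Re(w)}$ to obtain $|\cosh(z)|\le\tfrac12\bigl(e^{\Re(z)}+e^{-\Re(z)}\bigr)$. Since the hypothesis $\Re(z)>0$ forces $e^{-\Re(z)}\le e^{\Re(z)}$, the right-hand side is bounded by $e^{\Re(z)}$, which is exactly (a). I would note in passing that the same argument in fact delivers the bound for all $\Re(z)\ge 0$, a remark I will reuse in the next step.

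For part (b), where $\lambda\ge 0$ is a real parameter, the naive route—writing $\tfrac{\sinh(\lambda z)}{z}=\tfrac{1}{2z}\bigl(e^{\lambda z}-e^{-\lambda z}\bigr)$ and estimating directly by the triangle inequality—fails to yield a clean bound, because the factor $1/|z|$ can be arbitrarily large. To sidestep this I would use the representation
\[
\frac{\sinh(\lambda z)}{z}=\lambda\int_0^1\cosh(\lambda z s)\,ds,
\]
which follows by integrating $\frac{d}{ds}\sinh(\lambda z s)=\lambda z\cosh(\lambda z s)$ over $s\in[0,1]$ and dividing by $z$; the point is that this identity carries no singularity at $z=0$. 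Taking absolute values and applying part (a) to each integrand, and noting that $\Re(\lambda z s)=\lambda s\,\Re(z)\in[0,\lambda\Re(z)]$ for $s\in[0,1]$ so that $|\cosh(\lambda z s)|\le e^{\lambda s\Re(z)}\le e^{\lambda\Re(z)}$, I obtain
\[
\left|\frac{\sinh(\lambda z)}{z}\right|\le\lambda\int_0^1 e^{\lambda s\Re(z)}\,ds\le\lambda\,e^{\lambda\Re(z)},
\]
which is precisely (b).

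The only genuine obstacle is the division by $z$ in (b): once the integral representation is used to recast $\sinh(\lambda z)/z$ as $\lambda$ times an average of $\cosh$ values along the segment $[0,\lambda z]$, the estimate collapses to part (a), and everything else is routine.
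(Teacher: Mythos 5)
Your proof is correct and follows essentially the same route as the paper: part (a) by bounding $|\cosh(z)|$ through $\tfrac12\bigl(e^{\Re(z)}+e^{-\Re(z)}\bigr)$, and part (b) via the same integral representation of $\sinh(\lambda z)/z$ as an integral of $\cosh$, reduced to (a). The only (harmless) differences are cosmetic: the paper computes $|\cosh(z)|$ exactly before estimating, where you use the triangle inequality directly, and in (b) the paper evaluates the integral to $\bigl(e^{\lambda\Re(z)}-1\bigr)/\Re(z)$ and then bounds it by $\lambda e^{\lambda\Re(z)}$ via a power-series comparison, whereas you bound the integrand by its supremum — a slightly cleaner finish of the same argument.
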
 
		
		\noindent {\bf Proof.} a) We have 
\begin{align}
|\cosh z| &=\frac{\left|e^{z}+e^{-z}\right|}{2} = \frac{\left|e^{\Re{(z)} +i\Im(z)}+e^{- \Re{(z)} -i\Im(z)}\right|}{2}  \nonumber\\
& = \frac{\left|e^{\Re{(z)}}\left( \cos \Im(z) + i \sin \Im(z) \right) + e^{-\Re{(z)}}\left( \cos \Im(z) - i \sin \Im(z) \right) \right|}{2}   \nonumber\\
& =\frac{ \left| \left(e^{\Re{(z)}}+e^{-\Re{(z)}}\right)  \cos \Im(z) + i \sin \Im(z) \left( e^{\Re{(z)}}  - e^{-\Re{(z)}}  \right) \right|}{2}  \nonumber\\
& = \frac{\sqrt{\left(e^{\Re{(z)}}+e^{-\Re{(z)}}\right)^2  \cos^2 \Im(z) + \left(e^{\Re{(z)}}-e^{-\Re{(z)}}\right)^2  \sin^2 \Im(z) }}{2} \nonumber\\
& = \frac{\sqrt{ e^{2\Re{(z)}} +2\cos{2\Im(z)} + e^{-2\Re{(z)}}  }}{2} \nonumber\\
& \le \frac{ e^{\Re{(z)}}  + e^{-\Re{(z)}} }{2} =\cosh \Re(z) \le e^{\Re{(z)}} ~ \textrm{for} ~ \Re(z)> 0.   \nonumber
\end{align} 		

\vspace*{0.3cm}
		
	\noindent	b) First, we have 
		\begin{align}
		\left|\frac{\sinh (z\lambda) }{z} \right|& = \left| \int_0^\lambda  \cosh(s z) ds \right|   \le  \int_0^\lambda |\cosh(sz )| ds    \le  \int_0^\lambda e^{\Re(z)s } ds    = \frac{e^{\lambda\Re(z) }-1} {\Re(z)}. \label{lemma1prove1}
		\end{align}  
		Second, we have 
		\begin{align}
	 \frac{e^{\lambda\Re(z) }-1} {\Re(z)} = \,& \frac{\sum\limits_{k=0}^{+\infty}\frac{\left( \lambda\Re(z) \right)^k}{k!} - 1 }{ \Re(z) } = \frac{\sum\limits_{k=1}^{+\infty}\frac{\left( \lambda\Re(z) \right)^k}{k!}  }{ \Re(z) } \nn\\
		= & \lambda \sum\limits_{k=1}^{+\infty}\frac{\left( \lambda\Re(z) \right)^{k-1} }{k!}= \lambda \sum\limits_{l=0}^{+\infty}\frac{\left( \lambda\Re(z) \right)^{l} }{(l+1)!}  \nn\\  
		\le \,&  \lambda \sum\limits_{l=0}^{+\infty}\frac{\left( \lambda\Re(z) \right)^{l} }{l!} \le  \lambda e^{\lambda \Re(z) }. \label{lemma1prove2}
		\end{align}
		
		The inequality in part b) follows from    (\ref{lemma1prove1}) and (\ref{lemma1prove2}).$\square$\\

\begin{lemma}\label{ineqlemma2} Let  $\epsilon$, $\xi$, $p$, $\gamma$ be  positive real numbers.   If $\ep$ satisfies the following condition
		\begin{align}
		\epsilon < \left[\frac{\xi \gamma\cos\frac{\alpha \pi}{4}}{p}\right]^{ \frac{1}{\xi}}, \label{lemmamonoas2}
		\end{align}   then we have
		\begin{align}
		(1+\omega^2)^p \exp\left(  2(x-1-\gamma)\omega^{\xi} \cos \frac{\alpha \pi}{4} \right) \le \left(1+\epsilon^{-2}\right)^p \exp\left(  2(x-1-\gamma)\ep^{-\xi} \cos \frac{\alpha \pi}{4} \right) \label{lemmamonoas3}
		\end{align} for all $0\le x <  1$ and $\omega \ge \displaystyle \frac{1}{\epsilon}$.
	\end{lemma}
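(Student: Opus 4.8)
The plan is to fix $x$ and $p$ and to read both sides of \eqref{lemmamonoas3} as values of a single function of $\omega$. Setting
$$F(\omega) := (1+\omega^2)^p \exp\left(2(x-1-\gamma)\omega^\xi \cos\frac{\alpha \pi}{4}\right),$$
the right-hand side of \eqref{lemmamonoas3} is exactly $F(\epsilon^{-1})$, so the claim is equivalent to the monotonicity statement $F(\omega) \le F(\epsilon^{-1})$ for every $\omega \ge \epsilon^{-1}$. Since all quantities involved are strictly positive, I would establish this by showing that $F$ is non-increasing on the half-line $[\epsilon^{-1},\infty)$, which is cleanest to carry out through the logarithm.

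First I would record two elementary sign facts that make the exponent genuinely decaying. Because $0\le x<1$ and $\gamma>0$ we have $x-1-\gamma<-\gamma<0$, hence $1+\gamma-x>\gamma$; and because $0<\alpha<1$ the angle $\alpha\pi/4$ lies in $(0,\pi/4)$, so that $\cos\frac{\alpha\pi}{4}>0$. Differentiating $\ln F$ then gives
$$\frac{d}{d\omega}\ln F(\omega) = \frac{2p\omega}{1+\omega^2} - 2(1+\gamma-x)\,\xi\,\omega^{\xi-1}\cos\frac{\alpha\pi}{4},$$
and the objective becomes to verify that this quantity is $\le 0$ on the range of interest.

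The key simplification is the crude but sufficient estimate $\dfrac{\omega}{1+\omega^2}\le\dfrac{1}{\omega}$, which bounds the first term by $2p/\omega$, combined with $1+\gamma-x>\gamma$, which lowers the second term. Together these reduce the required inequality $\frac{d}{d\omega}\ln F\le 0$ to the single clean condition $\omega^\xi \ge \dfrac{p}{\xi\gamma\cos\frac{\alpha\pi}{4}}$. At this point hypothesis \eqref{lemmamonoas2} does the remaining work: raising it to the power $-\xi$ yields $\epsilon^{-\xi}>\dfrac{p}{\xi\gamma\cos\frac{\alpha\pi}{4}}$, and since $\omega\ge\epsilon^{-1}$ forces $\omega^\xi\ge\epsilon^{-\xi}$, the threshold holds for every admissible $\omega$. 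Thus $\ln F$, and therefore $F$, is non-increasing on $[\epsilon^{-1},\infty)$, and evaluating at the left endpoint $\omega=\epsilon^{-1}$ delivers \eqref{lemmamonoas3}.

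I do not expect a serious obstacle, as the statement is at bottom a one-variable monotonicity estimate. The only point demanding a little care is confirming that the polynomial factor $(1+\omega^2)^p$ cannot overwhelm the exponential decay as $\omega\to\infty$; the bound $\omega/(1+\omega^2)\le 1/\omega$ supplies exactly the slack needed to absorb it, and the precise exponent $1/\xi$ together with the constant $\xi\gamma\cos\frac{\alpha\pi}{4}/p$ appearing in \eqref{lemmamonoas2} is tailored so that the reduction closes precisely at the endpoint $\omega=\epsilon^{-1}$.
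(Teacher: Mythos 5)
Your proposal is correct and follows essentially the same route as the paper: both establish that the function $\omega \mapsto (1+\omega^2)^p \exp\bigl(2(x-1-\gamma)\omega^{\xi}\cos\frac{\alpha\pi}{4}\bigr)$ is non-increasing on $[\epsilon^{-1},\infty)$ by a derivative-sign computation using the same two estimates ($1+\omega^2 \ge \omega^2$, equivalently your bound $\omega/(1+\omega^2)\le 1/\omega$, and $1+\gamma-x \ge \gamma$) and the same threshold $\omega^{\xi} \ge p/(\xi\gamma\cos\frac{\alpha\pi}{4})$ supplied by \eqref{lemmamonoas2}. The only difference is that you differentiate $\ln F$ while the paper differentiates $F$ directly and factors out the positive term $\Gamma(\omega)$, which is purely cosmetic.
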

	
	\noindent {\bf Proof.} For $0\le x <1$, we define  
$$	\Lambda(\omega):= (1+\omega^2)^p \exp\left(  2(x-1-\gamma)\omega^{\xi} \cos \frac{\alpha \pi}{4} \right),$$ for $0\le x \le  1$.  Let us denote
$$\Gamma(\omega)=2\omega p (1+\omega^2)^{p-1}\exp\left(  2(x-1-\gamma)\omega^{\xi} \cos \frac{\alpha \pi}{4} \right) $$
then
		\begin{align}
		& \frac{d\Lambda}{d \omega} =  \Gamma(\omega) \left[ 1 - \frac{ (1+\omega^2)(1-x+\gamma)   \cos \frac{\alpha \pi}{4} .\xi.\omega^{\xi-2}}{ p}  \right] \nn\\
		   \le\,&  \Gamma(\omega) \left[ 1 - \frac{  \omega^2   \gamma    \cos \frac{\alpha \pi}{4} .\xi.\omega^{\xi-2}}{ p}  \right] \nn\\
		%%%%%%%%%%%%%%%%%%%%%
		 \le\,&  \Gamma(\omega) \left[ 1 - \frac{ \xi \gamma \cos \frac{\alpha \pi}{4} . \omega^{\xi }}{ p}  \right]  \le  \Gamma(\omega) \left[ 1 - \frac{ \xi \gamma \cos \frac{\alpha \pi}{4} . \ep^{-\xi }}{ p}  \right]  \label{lemmamonoas4} 
		\end{align} since $\omega \ge \displaystyle \frac{1}{\epsilon}$. 
It follows from (\ref{lemmamonoas2}) that $\displaystyle 1 - \frac{ \xi \gamma \cos \frac{\alpha \pi}{4} . \ep^{-\xi }}{ p} < 0$. Therefore, the inequality (\ref{lemmamonoas4}) implies $\displaystyle \frac{d \Lambda}{d \omega} \le  0$ for $\omega \ge \displaystyle \frac{1}{\epsilon}$. The proof is completed.$\Box$

\subsection{An example of ill-posedness for problem \eqref{problem1}-\eqref{problem4}  } In this subsection, we give  an example of ill-posedness  by choosing the function $f$ as follows 
\begin{align}
f(z,t,u(z,t))=\frac{1}{\sqrt{2\pi}}\int_{-\infty}^{+\infty} \frac{1}{2} e^{-k(\omega)}  \widehat u(z,\omega) e^{i\omega t } d\omega
  \label{exf}  
\end{align} or
\begin{align}
\widehat{f}(z,\omega,u(z,\omega))=\frac{1}{2} e^{-k(\omega)}  \widehat u(z,\omega)
, \label{exfb} 
\end{align} or 
 for all $(z,\omega)\in [0;1]\times[0;+\infty)$ and it is extended to  zero for all $(z,\omega)\in [0;1]\times(-\infty;0)$.   The ill-posedness of the problem \eqref{problem1}-\eqref{problem4} corresponding to the above function $f$ can be proved by using the following lemmas. 
 
\begin{lemma}
Let $f$ be defined as \eqref{exf}. Then for any $(g,h) \in \Big( L^2(\mathbb{R})\Big)^2 $,  Problem  \eqref{exsolution} has unique solution $u^*(g,h) \in C([0,1];L^2(\mathbb{R})) $.
\end{lemma}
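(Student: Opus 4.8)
The plan is to recast the integral equation \eqref{exsolution} with the special source \eqref{exf} as a fixed-point problem on the Banach space $X := C([0,1];L^2(\mathbb{R}))$, normed by $\|u\|_X := \sup_{0 \le x \le 1}\|u(x,\cdot)\|_{L^2(\mathbb{R})}$, and to solve it by the Banach contraction principle. Since $\widehat{f}(z,\omega,u) = \tfrac12 e^{-k(\omega)}\widehat{u}(z,\omega)$ for $\omega \ge 0$ and vanishes for $\omega < 0$, Parseval's identity shows that $u \in X$ solves \eqref{exsolution} if and only if $u = \Phi(u)$, where
\[
\Phi(u)(x,\cdot) = G(x,\cdot) - \int_0^x \mathcal{K}(x,z)\, u(z,\cdot)\, dz,
\]
$G(x,\cdot) := \mathcal{F}^{-1}\big[ \cosh(k(\omega)x)\,\widehat{g}(\omega) + \tfrac{\sinh(k(\omega)x)}{k(\omega)}\widehat{h}(\omega)\big]$ is the affine data term, and $\mathcal{K}(x,z)$ is the Fourier-multiplier operator acting as multiplication by $\tfrac12 e^{-k(\omega)}\tfrac{\sinh(k(\omega)(x-z))}{k(\omega)}$. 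First I would check that $G \in X$ and that $z \mapsto \mathcal{K}(x,z)u(z,\cdot)$ is Bochner integrable, so that $\Phi$ maps $X$ into itself.

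The heart of the argument --- and the reason \eqref{exf} is built with the factor $e^{-k(\omega)}$ --- is a uniform operator bound on $\mathcal{K}$. For $0 \le z \le x \le 1$ and $\omega \ge 0$, Lemma \ref{ineqlemma}(b), applied with $\lambda = x - z$ and complex argument $k(\omega)$, gives $\big|\tfrac{\sinh(k(\omega)(x-z))}{k(\omega)}\big| \le (x-z)\, e^{(x-z)\Re(k(\omega))}$, while $|e^{-k(\omega)}| = e^{-\Re(k(\omega))}$. Multiplying, the two exponentials combine:
\[
\Big| \tfrac12 e^{-k(\omega)}\,\tfrac{\sinh(k(\omega)(x-z))}{k(\omega)} \Big| \le \tfrac12 (x-z)\, e^{(x-z-1)\Re(k(\omega))} \le \tfrac12 (x-z),
\]
because $x - z - 1 \le 0$ and $\Re(k(\omega)) \ge 0$. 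Thus the growing factor $e^{(x-z)\Re(k(\omega))}$ is exactly absorbed by the decaying $e^{-\Re(k(\omega))}$, precisely because $x \le 1$. By Parseval's identity, $\mathcal{K}(x,z)$ is therefore bounded on $L^2(\mathbb{R})$ with $\|\mathcal{K}(x,z)\|_{L^2 \to L^2} \le \tfrac12 (x-z)$.

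With this bound in hand the conclusion is immediate. For $u_1, u_2 \in X$,
\[
\|\Phi(u_1) - \Phi(u_2)\|_X \le \sup_{0 \le x \le 1} \int_0^x \tfrac12 (x-z)\, \|u_1(z,\cdot) - u_2(z,\cdot)\|_{L^2}\, dz \le \tfrac14 \|u_1 - u_2\|_X,
\]
so $\Phi$ is a contraction and the Banach fixed-point theorem produces a unique $u^*(g,h) \in X$ solving \eqref{exsolution}. Continuity of $x \mapsto u^*(x,\cdot)$ in $L^2(\mathbb{R})$ then follows from the continuity in $x$ of the symbols $\cosh(k(\omega)x)$, $\sinh(k(\omega)x)/k(\omega)$ and $\sinh(k(\omega)(x-z))/k(\omega)$ together with the uniform bounds, via dominated convergence. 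I expect the one genuine obstacle to lie in the first step: confirming that the affine term $G$ and the integrand define honest elements of $X$ despite the exponentially growing homogeneous symbols $\cosh(k(\omega)x)$ and $\sinh(k(\omega)x)/k(\omega)$. Once $G \in X$ is secured, the contraction estimate above renders existence and uniqueness routine.
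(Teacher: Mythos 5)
Your argument is essentially the paper's argument: the paper likewise sets up $\Phi$ on $C([0,1];L^2(\mathbb{R}))$ and proves a one-step contraction from exactly the cancellation you isolate, namely Lemma \ref{ineqlemma}(b) giving $\left|\sinh\big(k(\omega)(x-z)\big)/k(\omega)\right|\le (x-z)\,e^{(x-z)\Re(k(\omega))}$, with the factor $e^{-k(\omega)}$ built into \eqref{exf} absorbing the growth precisely because $x-z\le 1$. The only difference is bookkeeping: the paper applies H\"older in $z$ and Parseval to reach $\|\Phi(v_1)(x,\cdot)-\Phi(v_2)(x,\cdot)\|^2_{L^2(\mathbb{R})}\le \tfrac12\,\|v_1-v_2\|^2_{C([0,1];L^2(\mathbb{R}))}$, while you bound the multiplier's $L^2\to L^2$ operator norm by $\tfrac12(x-z)$ and integrate via the Minkowski integral inequality, obtaining the marginally better Lipschitz constant $\tfrac14$; both routes yield a genuine contraction in a single application, with no need for the iterated-map device the paper uses later in Lemma \ref{uniqlemma}.

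The obstacle you flag in your final paragraph is real, and you should be aware that the paper does not resolve it either: its proof consists of the contraction estimate alone and never verifies that $\Phi$ maps $C([0,1];L^2(\mathbb{R}))$ into itself. For arbitrary $(g,h)\in \big(L^2(\mathbb{R})\big)^2$ this actually fails: $\cosh\big(k(\omega)x\big)$ grows like $\tfrac12\exp\big(x\cos\tfrac{\alpha\pi}{4}\,|\omega|^{\alpha/2}\big)$, so taking for instance $\widehat g(\omega)=e^{-|\omega|^{\alpha/4}}$ makes $\cosh\big(k(\omega)x\big)\widehat g(\omega)\notin L^2(\mathbb{R})$ for every $x>0$, and your affine term $G(x,\cdot)$ is then not an element of $L^2(\mathbb{R})$. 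As stated, the lemma therefore needs an extra hypothesis on the data (e.g.\ band-limited or suitably decaying $\widehat g,\widehat h$; note that the data used in the subsequent instability lemma are frequency-supported in $\left[n;\,n+\frac1n\right]$, for which $G\in C([0,1];L^2(\mathbb{R}))$ is immediate). So your contraction core is complete and coincides with the published proof, and the step you honestly leave open is a genuine lacuna --- but one the paper silently shares rather than closes.
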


\begin{proof}
Let us set
\begin{align} 
		\Phi(v)(x,t):=& \frac{1}{\sqrt{2\pi}}\int_{-\infty}^{+\infty}{ 
			\cosh \Big(k(\omega) x\Big) \hat{g}(\omega)  e^{i\omega t}d\omega} +
		\frac{1}{\sqrt{2\pi}}\int_{-\infty}^{+\infty}{ \frac{\sinh \Big(k(\omega) x\Big)}{k(\omega)} \widehat{h}(\omega) e^{i\omega t}d\omega} \nn \\
		&-    \frac{1}{\sqrt{2\pi}}\int_{-\infty}^{+\infty}\int_0^x  \frac{ \sinh {\Big (k(\omega)(x-z)\Big) }  }{k(\omega)} \widehat{f}(z,\omega,v(z,\omega))e^{i\omega t}dzd\omega   \nn
	\end{align} or
\begin{align}
\Phi(v)(x,t):=& \frac{1}{\sqrt{2\pi}}\int_{-\infty}^{+\infty}{ 
	\cosh \Big(k(\omega) x\Big) \hat{g}(\omega)  e^{i\omega t}d\omega} +
\frac{1}{\sqrt{2\pi}}\int_{-\infty}^{+\infty}{ \frac{\sinh \Big(k(\omega) x\Big)}{k(\omega)} \widehat{h}(\omega) e^{i\omega t}d\omega} \nn \\
&-   \frac{1}{2\sqrt{2\pi}}\int_{-\infty}^{+\infty} \int_0^x \frac{ \sinh {\Big (k(\omega)(x-z)\Big) }  }{k(\omega)} e^{-k(\omega)}  \widehat v(z,\omega) e^{i\omega t} dz d\omega,\nn 
\end{align}
for all $ v\in C([0,1];L^2(\mathbb{R}))$. For any  $(g,h) \in L^2(\mathbb{R}) \times L^2(\mathbb{R})  $, we  claim that 
the following equation 
\begin{align}
	v(x,t)=\Phi(v)(x,t)  \label{example1}  
	\end{align} has  a unique solution  in $C([0,1];L^2(\mathbb{R}))$.  
 For $v_1, v_2\in C([0,1];L^2(\mathbb{R}))$ and $0\le x\le1$, we shall prove that
 \begin{align}
 \|\Phi(v_1)(x,.)-\Phi(v_2)(x,.)\|^2_{L^2(\mathbb{R})}  \le    \frac{1}{2}\,\, \|v_1-v_2\|^2 _{C([0,1];L^2(\mathbb{R}))}. \label{exexqn}
 \end{align}     Indeed, we have  		
 \begin{align}
 & \Phi(v_1)(x,t)-\Phi(v_2)(x,t) \nn\\
      = \,&   - \frac{1}{2\sqrt{2\pi}}\int_{-\infty}^{+\infty}\int_0^x  \frac{ \sinh {\Big (k(\omega)(x-z)\Big) }  }{k(\omega)} e^{-k(\omega)}\left[\widehat v_1(z,\omega ) - \widehat v_2(z,\omega ) \right] dz e^{i\omega t}d\omega \nn\\
  =\,&  \mathcal{F}^{-1}\left( -\frac{1}{2}\int_0^x  \frac{ \sinh {\Big (k(\omega)(x-z)\Big) }  }{k(\omega)} e^{-k(\omega)}\left[\widehat v_1(z,\omega ) - \widehat v_2(z,\omega ) \right]dz\right)(t). \nn
 \end{align} 
By using the Parseval's identity, we obtain 
 \begin{align}
&   \|\Phi(v_1)(x,.)-\Phi(v_2)(x,.)\|_{L^2(\mathbb{R})}^2  \nn\\
  =\,&  \int_{-\infty}^{+\infty} \left| \Phi(v_1)(x,t)-\Phi(v_2)(x,t) \right|^2 dt  \nn\\
 = \,&  \frac{1}{2}\int_{-\infty}^{+\infty}   \left| \int_0^x  \frac{ \sinh {\Big (k(\omega)(x-z)\Big) }  }{k(\omega)} e^{-k(\omega)}\left[\widehat v_1(z,\omega ) - \widehat v_2(z,\omega ) \right]dz \right|^2   d\omega. \label{exex3}
 \end{align}
 Applying the H\"older inequality to (\ref{exex3}), we derive
 \begin{align}
 &   \left| \int_0^x  \frac{ \sinh {\Big (k(\omega)(x-z)\Big) }  }{k(\omega)} e^{-k(\omega)}\left[\widehat v_1(z,\omega ) - \widehat v_2(z,\omega ) \right]dz\right|^2 \nn\\
  \le\,&    x   \int_0^x \left| \frac{ \sinh {\Big (k(\omega)(x-z)\Big) }  }{k(\omega)}\right|^2  
 \left|e^{-k(\omega)}\left[\widehat v_1(z,\omega ) - \widehat v_2(z,\omega ) \right]\right|^2 dz   \nn\\
 \le\,&     \int_0^x (x-z)^2 \exp\Big(2\Re(k(\omega))(x-z)\Big)  \exp\Big(-2\Re(k(\omega)) \Big) 
 \left|\widehat v_1(z,\omega ) - \widehat v_2(z,\omega )\right|^2 dz     \nn\\
  \le \,&     \int_0^x  
 \left|\widehat v_1(z,\omega ) - \widehat v_2(z,\omega )\right|^2 dz,     \label{exex4}
 \end{align} where we have used the Lemma \ref{ineqlemma} as follows
\begin{align}
\left| \frac{ \sinh {\Big (k(\omega)(x-z)\Big) }  }{k(\omega)}\right|^2 \le (x-z)^2 \exp\Big(2\Re(k(\omega))(x-z)\Big). \label{appliedlemma} 
\end{align}

 It follows from (\ref{exex3}), and (\ref{exex4}) that 
 \begin{align} \|\Phi(v_1)(x,.)-\Phi(v_2)(x,.)\|_{L^2(\mathbb{R})}^2 
  \le \,\,& \frac{1}{2} \int_{-\infty}^{+\infty}     \int_0^x   \left|\widehat v_1(z,\omega ) - \widehat v_2(z,\omega )\right|^2 dz d\omega \nn\\
 \le \,\,& \frac{1}{2}\int_0^x         \norm{\widehat v_1(z,. ) - \widehat v_2(z,. )}^2_{L^2(\mathbb{R})}  dz \nn\\
  \le \,\,& \frac{1}{2} \int_0^x         \norm{  v_1(z,. ) -   v_2(z,. )}^2_{L^2(\mathbb{R})}  dz  \label{exex5}\\
 \le \,\,& \,\, \frac{1}{2}\|v_1-v_2\|^2 _{C([0,1];L^2(\mathbb{R}))} . \label{exex6}
 \end{align}	
Consequently, we get
 \begin{align}
 \|\Phi(v_1) -\Phi(v_2) \|^2_{C([0,1];L^2(\mathbb{R}))}  \le    \frac{1}{2}\,\, \|v_1-v_2\|^2 _{C([0,1];L^2(\mathbb{R}))}, 
 \end{align}  so $\Phi$ is a contract mapping on $C([0,1];L^2(\mathbb{R}))$.  Thus, there is a unique fixed point of $\displaystyle\Phi $ in $C([0,1];L^2(\mathbb{R}))$ which is denoted by $u^*(g,h)$, i.e. $\Phi(u^*(g,h))=u^*(g,h)$. We obtain the existence and uniqueness of the solution of \eqref{example1}. Hence, the problem  \eqref{exsolution} has a unique solution $u^*(g,h) \in C([0,1];L^2(\mathbb{R})) $.
\end{proof}

\begin{lemma}
Let $f$ be defined as \eqref{exf}. Then for any $(g,h) \in \Big( L^2(\mathbb{R})\Big)^2 $,  Problem  \eqref{exsolution} is unstable.
\end{lemma}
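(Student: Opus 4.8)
The plan is to exploit the fact that, for the source \eqref{exf}, the map $\Phi$ is \emph{affine} in $v$: writing $\Phi(v)=L(g,h)+Bv$, where
$$L(g,h)(x,t)=\frac{1}{\sqrt{2\pi}}\int_{-\infty}^{+\infty}\cosh\big(k(\omega)x\big)\widehat g(\omega)e^{i\omega t}d\omega+\frac{1}{\sqrt{2\pi}}\int_{-\infty}^{+\infty}\frac{\sinh\big(k(\omega)x\big)}{k(\omega)}\widehat h(\omega)e^{i\omega t}d\omega$$
collects the two data terms and $B$ is the linear integral operator appearing in the last line of $\Phi$. The contraction estimate \eqref{exexqn} established in the proof of the previous lemma says precisely that $\|Bv_1-Bv_2\|_{C([0,1];L^2(\mathbb{R}))}\le \frac{1}{\sqrt2}\|v_1-v_2\|_{C([0,1];L^2(\mathbb{R}))}$, since the data terms cancel in the difference $\Phi(v_1)-\Phi(v_2)$.

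First I would reduce the instability of the full fixed-point solution to the instability of the linear data map $L$. Given two data pairs, set $w=u^*(g_1,h_1)-u^*(g_2,h_2)$ and $\ell=L(g_1-g_2,h_1-h_2)$. Subtracting the two identities $u^*=\Phi(u^*)$ gives $w=\ell+Bw$, whence
$$\|w\|_{C([0,1];L^2(\mathbb{R}))}\ge \|\ell\|_{C([0,1];L^2(\mathbb{R}))}-\|Bw\|_{C([0,1];L^2(\mathbb{R}))}\ge \|\ell\|_{C([0,1];L^2(\mathbb{R}))}-\frac{1}{\sqrt2}\|w\|_{C([0,1];L^2(\mathbb{R}))},$$
so that $\|w\|_{C([0,1];L^2(\mathbb{R}))}\ge \frac{\sqrt2}{\sqrt2+1}\|\ell\|_{C([0,1];L^2(\mathbb{R}))}$. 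Taking $g_2=h_2=h_1=0$ (so that $\Phi(v)=Bv$ has the unique fixed point $u^*(0,0)=0$), it therefore suffices to produce data $g_n$ with $\|g_n\|_{L^2(\mathbb{R})}\to 0$ but $\|L(g_n,0)\|_{C([0,1];L^2(\mathbb{R}))}\to\infty$.

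For the explicit counterexample I would take $\widehat{g_n}=\frac{1}{n}\mathbf{1}_{[n,n+1]}$ and $h_n=0$, so that by Parseval $\|g_n\|_{L^2(\mathbb{R})}=\frac1n\to 0$. Using the elementary lower bound $|\cosh z|\ge \sinh(\Re(z))$ (which follows from $|\cosh z|^2=\cosh^2(\Re z)\cos^2(\Im z)+\sinh^2(\Re z)\sin^2(\Im z)\ge \sinh^2(\Re z)$), evaluated at a fixed interior point $x_0\in(0,1)$ and combined with $\|L(g_n,0)\|_{C([0,1];L^2(\mathbb{R}))}\ge \|L(g_n,0)(x_0,\cdot)\|_{L^2(\mathbb{R})}$, Parseval gives
$$\|L(g_n,0)(x_0,\cdot)\|_{L^2(\mathbb{R})}^2=\int_{n}^{n+1}\big|\cosh(k(\omega)x_0)\big|^2\frac{1}{n^2}d\omega\ge \frac{1}{n^2}\sinh^2\!\Big(n^{\alpha/2}\cos\frac{\alpha\pi}{4}\,x_0\Big),$$
where I used $\Re(k(\omega))=|\omega|^{\alpha/2}\cos\frac{\alpha\pi}{4}$ from \eqref{rexi} and its monotonicity on $[n,n+1]$. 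The right-hand side grows like $n^{-2}e^{2n^{\alpha/2}\cos(\frac{\alpha\pi}{4})x_0}\to\infty$, hence $\|u^*(g_n,0)\|_{C([0,1];L^2(\mathbb{R}))}\to\infty$ while the data tend to $0$ in $\big(L^2(\mathbb{R})\big)^2$; this contradicts continuous dependence and proves the instability.

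The main obstacle, and the only point requiring genuine care, is to ensure that the source term does not cancel the exponential blow-up of the $\cosh$-factor carried by the data. This is exactly what the reduction in the second paragraph resolves: because the Lipschitz constant $\frac{1}{\sqrt2}<1$ makes $(I-B)$ boundedly invertible, the operator $B$ cannot absorb the divergence of the linear data response $\ell$, so the lower bound $\|w\|\ge \frac{\sqrt2}{\sqrt2+1}\|\ell\|$ propagates the blow-up to the true solution. The remaining ingredients — the lower bound for $|\cosh z|$ and the monotonicity of $\Re(k(\omega))$ on the frequency band $[n,n+1]$ — are routine, so I expect no further difficulty beyond this reduction.
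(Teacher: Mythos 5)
Your proof is correct, and it takes a genuinely different route from the paper's. The paper perturbs \emph{both} data simultaneously, choosing $\widehat{g_n}(\omega)=\frac{1}{k(\omega)}\chi_{[n,\,n+\frac1n]}(\omega)$ and $\widehat{h_n}(\omega)=\chi_{[n,\,n+\frac1n]}(\omega)$ precisely so that the data response collapses to the single exponential $\frac{e^{k(\omega)x}}{k(\omega)}$, which sidesteps any need to bound $|\cosh|$ from below; your choice $h_n=0$ forces you to supply that bound, and your identity $|\cosh z|^2=\cosh^2(\Re z)\cos^2(\Im z)+\sinh^2(\Re z)\sin^2(\Im z)$ is correct and consistent with the formula $|\cosh z|^2=\frac{e^{2\Re z}+2\cos 2\Im(z)+e^{-2\Re z}}{4}$ appearing in the proof of Lemma \ref{ineqlemma}. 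More substantively, the paper absorbs the source contribution by hand: it bounds the term $B(x,\omega)$ as in \eqref{exex4}, invokes $|z_1-z_2|^2\ge \frac{|z_1|^2}{2}-|z_2|^2$, and closes with a sup-absorption step to get the explicit rate $\sup_{0\le x\le 1}\|u_n-u\|^2_{L^2(\mathbb{R})}\ge\frac23 n$. You replace all of this with the observation that for the source \eqref{exf} the map $\Phi$ is affine with linear part $B$ of norm at most $\frac{1}{\sqrt2}$ by \eqref{exexqn}, so $w=\ell+Bw$ yields $\|w\|_{C([0,1];L^2(\mathbb{R}))}\ge\frac{\sqrt2}{\sqrt2+1}\|\ell\|_{C([0,1];L^2(\mathbb{R}))}$ --- in effect the bounded invertibility of $I-B$ --- reducing instability of the fixed point to instability of the linear data map $L$. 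Your reduction is more modular (it works for any contraction factor below $1$, and even for genuinely nonlinear Lipschitz sources inducing a contractive integral operator, since the data terms still cancel in the difference of the two fixed-point identities) and it delivers an exponentially fast blow-up of order $n^{-1}\sinh\big(x_0 n^{\alpha/2}\cos\frac{\alpha\pi}{4}\big)$, considerably stronger than the paper's $\sqrt{2n/3}$; the paper's route stays entirely elementary and on the Fourier side, at the cost of the ad hoc absorption inequality. The remaining ingredients you use --- Parseval, the monotonicity of $\Re(k(\omega))=|\omega|^{\alpha/2}\cos\frac{\alpha\pi}{4}$ from \eqref{rexi}, and uniqueness of the fixed point giving $u^*(0,0)=0$ --- all check out, so there is no gap.
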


\begin{proof}

  To show the instability of $u$, we construct the functions $g_0=h_0=0$ and $ (g_n, h_n)$
 defined by the Fourier transform, as follows:
 $$\widehat{g_n}(\omega)= \displaystyle \frac{1}{k(\omega)} \chi_{\left[n;\,n+\frac{1}{n}\right]}(\omega),~~\widehat{h_n}(\omega)=\displaystyle\chi_{\left[n;\,n+\frac{1}{n}\right]}(\omega),$$ for all $\omega \in \mathbb{R}$. It is easy to check that 
 \[
  \norm{g_n-g_0}_{L^2(\mathbb{R})}+ \norm{h_n-h_0}_{L^2(\mathbb{R})} \to 0,
 \]
 when $n \to 0$. Indeed,  we have 
 \begin{align}
  \norm{g_n-g_0}_{L^2(\mathbb{R})}^2=\norm{ g_n}_{L^2(\mathbb{R})}^2=&\,\,\norm{\widehat g_n}_{L^2(\mathbb{R})}^2 
  =\int_{n}^{\, n+\frac{1}{n}} \left| \frac{1}{k(\omega)} \right|^2 d\omega. \nn
\end{align}  
We note that $\displaystyle  \omega^\alpha > 1$  for all $\displaystyle \omega\in \left[n;\,n+\frac{1}{n}\right] $. So 
\begin{align}
\int_{n}^{\, n+\frac{1}{n}} \left| \frac{1}{k(\omega)} \right|^2 d\omega = \int_{n}^{\, n+\frac{1}{n}}  \frac{1}{ \omega^{\alpha} }  d\omega < \int_{n}^{\, n+\frac{1}{n}}    d\omega < \frac{1}{n} \longrightarrow 0 \quad \textrm{when } n \to +\infty .\nn
\end{align}
This implies $\displaystyle \norm{g_n-g_0}_{L^2(\mathbb{R})}^2 \longrightarrow 0 $ when $n \to +\infty$. Moreover, 
  \begin{align}
  \norm{h_n-h_0}_{L^2(\mathbb{R})}^2=\norm{h_n}_{L^2(\mathbb{R})}^2 =&\norm{\widehat h_n }_{L^2(\mathbb{R})}^2=\int_{n}^{\, n+\frac{1}{n}}  d\omega =\frac{1}{n} \longrightarrow 0 \quad \textrm{when } n \to +\infty.
  \end{align}

 Let $u_n(g_n,h_n)$ and $u(g_0, h_0)$ be two solutions of  Problem  \eqref{exsolution} corresponding to $(g_n,h_n)$ and $(g_0, h_0)$ respectively. The existence of $u_n(g_n,h_n)$ and $u(g_0, h_0)$ has been proved in Lemma 2.3.   We will show that 
 $\|u_n(g_n,h_n)-u(g_0, h_0) \|_{L^2(\mathbb{R})}$ does not converge to zero when $n \to +\infty$. 
 Since
 \begin{align}
 \widehat{u_n}(g_n,h_n)(x,\omega)  
 = \,& \cosh \Big(k(\omega) x\Big)  \frac{1}{k(\omega)}\chi_{\left[n;\,n+\frac{1}{n}\right]} +  \frac{\sinh \big(k(\omega) x\Big)}{k(\omega)}  \chi_{\left[n;\,n+\frac{1}{n}\right]}(\omega) \nn\\
 & \hspace*{3.1cm} -   \frac{1}{2} \int_0^x \frac{ \sinh {\Big (k(\omega)(x-z)\Big) }  }{k(\omega)} e^{-k(\omega)}  \widehat u_n(g_n,h_n)(z,\omega)  dz  \nonumber \nn \\
   = \,& \frac{e^{k(\omega) x}}{k(\omega)}.\chi_{\left[n;\,n+\frac{1}{n}\right]}(\omega)  -  \frac{1}{2}  \int_0^x \frac{ \sinh {\Big (k(\omega)(x-z)\Big) }  }{k(\omega)} e^{-k(\omega)}  \widehat u_n(g_n,h_n)(z,\omega)  dz.  \label{2.25}
 \end{align}
We  get
\begin{align}
 \widehat{u_n}(g_n,h_n)(x,\omega)-\widehat{u}(g_0,h_0)(x,\omega) = A(x,\omega) - B(x,\omega) , \label{exa1} 
\end{align}
where   $$A(x,\omega):=\frac{e^{k(\omega) x}}{k(\omega)}\chi_{\left[n;\,n+\frac{1}{n}\right]}(\omega)$$ 
and 
$$B(x,\omega):= \frac{1}{2} \int_0^x \frac{ \sinh {\Big (k(\omega)(x-z)\Big) }  }{k(\omega)} e^{-k(\omega)}\Big[  \widehat u_n(g_n,h_n)(z,\omega) -  \widehat u(g_0,h_0)(z,\omega) \Big] dz.$$
If $n$ is  large enough, then
\begin{align}
\Big|A(x,\omega)\Big| = \,\, \frac{e^{\,\mathcal{R}(k(\omega))x}}{\omega^{\,\,{\frac{\alpha}{2}}}} = \,\, \frac{\exp\left(x\cos\frac{\alpha\pi}{4}.\omega^{\,\,{\frac{\alpha}{2}}}\right) }{\omega^{\,\,{\frac{\alpha}{2}}}}  \ge   \sqrt{6}. \omega, \label{exa2} 
\end{align} for $\displaystyle \omega\in\left[n;\,n+\frac{1}{n}\right]$. On the other hand, we have 
\begin{align*}
\Big|B(x,\omega)\Big|^2 \le  \frac{1}{2} \int_0^x \left|  \widehat u_n(g_n,h_n)(z,\omega) -  \widehat u(g_0,h_0)(z,\omega) \right|^2 dz
\end{align*} by using the same way in (\ref{exex4}). Hence,
\begin{align}
& \int_{\left[n;\,n+\frac{1}{n}\right]} \left|B(x,\omega)\right|^2d\omega\le   \int_{-\infty}^{+\infty} \left|B(x,\omega)\right|^2d\omega \nn\\
  \le \,\,& \frac{1}{2} \int_0^x \int_{-\infty}^{+\infty} \left|  \widehat u_n(g_n,h_n)(z,\omega) -  \widehat u(g_0,h_0)(z,\omega) \right|^2  d\omega dz \nn\\
 \le \,\,& \frac{1}{2}\int_0^x \norm{\widehat u_n(g_n,h_n)(z,.)-\widehat u(g_0,h_0)(z,.)}_{L^2(\mathbb{R})}^2 dz   \nn\\
 \le \,\,& \frac{1}{2}\int_0^x \norm{  u_n(g_n,h_n)(z,.)-  u(g_0,h_0)(z,.)}_{L^2(\mathbb{R})}^2 dz. \label{exa3}
\end{align} 
It follows from (\ref{exa1}), (\ref{exa2}), and (\ref{exa3}) that 
\begin{align}
&\norm{u_n(g_n,h_n)(x,.)-u(g_0,h_0)(x,.)}_{L^2(\mathbb{R})}^2 \nn\\
= &\,\, \int_{-\infty}^{+\infty} \left| A(x,\omega)-B(x,\omega) \right|^2d\omega \nn\\
\ge &\,\, \int_{\left[n;\,n+\frac{1}{n}\right]} \left( \frac{\left| A(x,\omega)\right|^2}{2}- \left|B(x,\omega) \right|^2\right)d\omega\nn\\
\ge &\,\, \int_{\left[n;\,n+\frac{1}{n}\right]} 3\omega^2 d\omega -   \int_{\left[n;\,n+\frac{1}{n}\right]} \left|B(x,\omega) \right|^2 d\omega\nn\\
\ge &\,\, \int_{\left[n;\,n+\frac{1}{n}\right]} 3\omega^2 d\omega - \frac{1}{2} \int_0^x \norm{u_n(g_n,h_n)(z,.)-u(g_0,h_0)(z,.)}_{L^2(\mathbb{R})}^2 dz \nn\\
\ge &\,\, \int_{\left[n;\,n+\frac{1}{n}\right]} 3\omega^2 d\omega - \frac{1}{2} \sup_{0 \le z \le 1} \norm{u_n(g_n,h_n)(z,.)-u(g_0,h_0)(z,.)}_{L^2(\mathbb{R})}^2, \label{exa4}
\end{align} where we have used  the following inequalities $$\left| z_1-z_2 \right|^2 \ge \Big| \left| z_1\right|-\left|z_2 \right| \Big|^2 \ge \frac{\left| z_1\right|^2}{2}- \left|z_2 \right|^2,$$ for all complex numbers $z_1$ and $z_2$. From (\ref{exa4}) we have
\begin{align}
&\,\,\norm{u_n(g_n,h_n)(x,.)-u(g_0,h_0)(x,.)}_{L^2(\mathbb{R})}^2  + \frac{1}{2} \sup_{0 \le z \le 1} \norm{u_n(g_n,h_n)(z,.)-u(g_0,h_0)(z,.)}_{L^2(\mathbb{R})}^2 \ge n \label{exa5}
\end{align} since $$\int_{\left[n;\,n+\frac{1}{n}\right]} 3\omega^2d\omega =   \left[\left(n+\frac{1}{n}\right)^3-n^3\right] \ge n.$$ 
The left side of (\ref{exa5}) is less than $\displaystyle \frac{3}{2} \sup_{0 \le x \le 1} \norm{u_n(g_n,h_n)(x,.)-u(g_0,h_0)(x,.)}_{L^2(\mathbb{R})}^2$. Hence, we obtain
\begin{equation}
\sup_{0 \le x \le 1} \norm{u_n(g_n,h_n)(x,.)-u(g_0,h_0)(x,.)}_{L^2(\mathbb{R})}^2 \ge \frac{2}{3}n.
\end{equation}
The above inequality implies that Problem  \eqref{exsolution} is  ill-posed in the Hadamard sense in $L^2$-norm. 
\end{proof}

{\color{black} 
\begin{remark}
	Notice that the solution of  Problem \eqref{problem1}-\eqref{problem4} in the case $F=0$ is 
	\begin{align} 
	u(x,t)= \frac{1}{\sqrt{2\pi}}\int_{-\infty}^{+\infty}{ 
		\cosh \Big(k(\omega) x\Big) \hat{g}(\omega)  e^{i\omega t}d\omega}  \label{exsolution22}
	\end{align}	
	The growth factor $ |	\cosh \Big(k(\omega) x\Big)| \simeq \exp \Big(|\omega|^{\frac{\alpha}{2}} \cos{\frac{\alpha \pi}{4}} \Big) $ leads to the severely ill-posed nature of the homogeneous problem.  Then, the degree of ill-posedness  of our problem is  $\exp \Big(|\omega|^{\frac{\alpha}{2}} \cos{\frac{\alpha \pi}{4}} \Big)$.  As in \cite{Fre}, we can see that the degree of ill-posedness for the
	classical parabolic problem is $\exp \Big(|\omega|^{\frac{1}{2}}  \Big)$. 
	Since $0<\alpha <1$, we have that $\exp \Big(|\omega|^{\frac{\alpha}{2}} \cos{\frac{\alpha \pi}{4}} \Big)$  grows at a slower rate than $\exp \Big(|\omega|^{\frac{1}{2}}  \Big)$ does. This implies that the  fractional  case of  parabolic 
	problem is ``less ill-posed'' than the classical  one.
\end{remark}
}

\section{Regularization and error estimate for  problem \eqref{exsolution} }
\subsection{Regularized solution}
\noindent In order to obtain a stable approximate solution of the problem, we  apply the truncation method. Let $\displaystyle (g^{\delta},h^{\delta})\in \left( L^2(\mathbb{R}) \right)^2$  be the measured data which satisfy
\begin{align}
\norm{g^{\delta}-g }_{L^2(\mathbb{R})} + \norm{h^{\delta}-h }_{L^2(\mathbb{R})}  \le \delta, \label{noisyassumption}
\end{align}
where the constant $\delta>0 $ is called the error level. We present the following regularization problem  
\begin{align} 
\widehat{w}(x,\omega)&= \cosh \Big(k(\omega) x\Big) \hat{g}_\epsilon^\delta(\omega) + \frac{\sinh \Big(k(\omega) x\Big)}{k(\omega)} \widehat{h}_\ep^\delta(\omega)   -    \int_0^x  \frac{ \sinh {\Big (k(\omega)(x-z)\Big) }  }{k(\omega)} \widehat{f}_\epsilon(z,\omega,w(z,\omega))dz   ,  \label{regularizedb1}
\end{align}
for $x\ge 0$,   $\omega\in\mathbb{R}$, where $\epsilon:=\epsilon(\delta)>0$ is a regularization parameter and 
$$\Big[\widehat{g}^\delta_\epsilon(\omega) \quad \widehat{h}^\delta_\epsilon(\omega) \quad \widehat{f}_\epsilon(z,\omega,w(z,\omega)) \Big] := \Big[\widehat{g}^\delta(\omega) \quad \widehat{h}^\delta(\omega) \quad \widehat{f}(z,\omega,w(z,\omega)) \Big]\chi_{\left[-\frac{1}{\epsilon};\frac{1}{\epsilon}  \right]}(\omega).$$  The problem (\ref{regularizedb1}) can be rewritten as \begin{align} 
w(x,t)=& \frac{1}{\sqrt{2\pi}}\int_{-\infty}^{+\infty}{ 
	\cosh \Big(k(\omega) x\Big) \hat{g}_\epsilon^\delta(\omega)  e^{i\omega t}d\omega} +
\frac{1}{\sqrt{2\pi}}\int_{-\infty}^{+\infty}{ \frac{\sinh \Big(k(\omega) x\Big)}{k(\omega)} \widehat{h}_\ep^\delta(\omega) e^{i\omega t}d\omega} \nn \\
&-    \frac{1}{\sqrt{2\pi}}\int_{-\infty}^{+\infty}\int_0^x  \frac{ \sinh {\Big (k(\omega)(x-z)\Big) }  }{k(\omega)} \widehat{f}_\epsilon(z,\omega,w(z,\omega))e^{i\omega t}dzd\omega   , \label{regularizedsolution2}
\end{align}
for $x\ge 0$, $t\in\mathbb{R}$.\\

%%%%%%%%%%%%%%%%%%%%%%%%%%%%%%%%%%%%%%%%%%%%%%%%
%LEM: EXISTENCE AND UNIQUENESS SOLUTION OF REGULARIZED PROBLEM 

The following lemma will show  that the regularized problem (\ref{regularizedsolution2}) is well-posed.

\begin{lemma}\label{uniqlemma} Let $\epsilon>0$, $\delta>0$ and $g^\delta,h^\delta\in L^2(\mathbb{R})$. Assume that $f\in L^\infty([0,1]\times\mathbb{R}\times\mathbb{R})$ satisfies the following condition 
	\begin{align}
	|f(x,t,v_1)-f(x,t,v_2)|\,\,\,\le\,\,\,& K|v_1-v_2|, \quad 0\le x \le 1, \quad t\in \mathbb{R}, \quad v_1, v_2\in \mathbb{R} \label{f2}
	\end{align}
	for a constant $K>0$ independent of $x$, $t$, $v_1$, $v_2$. Then Problem (\ref{regularizedsolution2}) has a   unique solution denoted by $u_\epsilon^\delta \in C([0,1];L^2(\mathbb{R}))$.  
\end{lemma}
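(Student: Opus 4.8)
The plan is to recast the contraction-mapping argument already used for the ill-posedness example, but to account for the fact that the general Lipschitz source $f$ no longer carries the damping factor $e^{-k(\omega)}$ that produced the clean contraction constant $\tfrac12$ there. I would define the nonlinear operator $\Phi_\epsilon$ on $C([0,1];L^2(\mathbb{R}))$ by letting $\Phi_\epsilon(w)(x,t)$ be the right-hand side of \eqref{regularizedsolution2}. The frequency cut-off $\chi_{[-1/\epsilon,1/\epsilon]}$ makes all three terms well defined in $L^2(\mathbb{R})$: it confines every integrand to $|\omega|\le 1/\epsilon$, a band on which $\Re(k(\omega))=|\omega|^{\alpha/2}\cos\frac{\alpha\pi}{4}$ is bounded by $\Lambda_\epsilon:=\epsilon^{-\alpha/2}\cos\frac{\alpha\pi}{4}$, so that $\cosh(k(\omega)x)\widehat g^\delta_\epsilon$, $\frac{\sinh(k(\omega)x)}{k(\omega)}\widehat h^\delta_\epsilon$ and the truncated nonlinear term all have compactly supported, bounded Fourier transforms and thus lie in $L^2(\mathbb{R})$. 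A short continuity check in $x$ then shows $\Phi_\epsilon$ maps $C([0,1];L^2(\mathbb{R}))$ into itself.

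Next comes the difference estimate. For $v_1,v_2\in C([0,1];L^2(\mathbb{R}))$ the first two terms cancel, so $\Phi_\epsilon(v_1)-\Phi_\epsilon(v_2)$ is the inverse Fourier transform of the single Volterra integral $-\int_0^x \frac{\sinh(k(\omega)(x-z))}{k(\omega)}\chi_{[-1/\epsilon,1/\epsilon]}(\omega)\big[\widehat f(z,\omega,v_1)-\widehat f(z,\omega,v_2)\big]\,dz$. I would proceed exactly as in \eqref{exex3}--\eqref{exex4}: apply Parseval's identity, the H\"older inequality in $z$, and bound the kernel by Lemma \ref{ineqlemma}(b), which on the support $|\omega|\le1/\epsilon$ gives $\left|\frac{\sinh(k(\omega)(x-z))}{k(\omega)}\right|\le (x-z)e^{(x-z)\Re(k(\omega))}\le e^{\Lambda_\epsilon}$. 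The Lipschitz assumption \eqref{f2} together with Parseval turns $\norm{\widehat f(z,\cdot,v_1)-\widehat f(z,\cdot,v_2)}_{L^2(\mathbb{R})}$ into $K\,\norm{v_1(z,\cdot)-v_2(z,\cdot)}_{L^2(\mathbb{R})}$, so that, with $C_\epsilon:=K^2 e^{2\Lambda_\epsilon}$,
\[
\norm{\Phi_\epsilon(v_1)(x,\cdot)-\Phi_\epsilon(v_2)(x,\cdot)}_{L^2(\mathbb{R})}^2 \le C_\epsilon \int_0^x \norm{v_1(z,\cdot)-v_2(z,\cdot)}_{L^2(\mathbb{R})}^2\,dz .
\]

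The main obstacle is that $C_\epsilon$ can be arbitrarily large, so a single application of $\Phi_\epsilon$ need not be a contraction — this is precisely why the clean factor $\tfrac12$ of the previous lemma is unavailable. The remedy is to exploit the Volterra structure of the $\int_0^x$ term and iterate: by induction on $n$, feeding the pointwise bound above into itself, I expect
\[
\norm{\Phi_\epsilon^{\,n}(v_1)(x,\cdot)-\Phi_\epsilon^{\,n}(v_2)(x,\cdot)}_{L^2(\mathbb{R})}^2 \le \frac{(C_\epsilon\,x)^n}{n!}\,\norm{v_1-v_2}_{C([0,1];L^2(\mathbb{R}))}^2 ,
\]
the factor $1/n!$ arising from the iterated simplex integral $\int_0^x\!\int_0^{z_1}\!\cdots$. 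Taking the supremum over $x\in[0,1]$ gives $\norm{\Phi_\epsilon^{\,n}(v_1)-\Phi_\epsilon^{\,n}(v_2)}_{C([0,1];L^2(\mathbb{R}))}^2\le \frac{C_\epsilon^{\,n}}{n!}\norm{v_1-v_2}_{C([0,1];L^2(\mathbb{R}))}^2$, and since $C_\epsilon^{\,n}/n!\to 0$ there is an $n_0$ with $C_\epsilon^{\,n_0}/n_0!<1$, i.e. $\Phi_\epsilon^{\,n_0}$ is a contraction. As $C([0,1];L^2(\mathbb{R}))$ is a Banach space, the generalized Banach fixed-point theorem (a map one of whose iterates is a contraction has a unique fixed point) yields a unique $u_\epsilon^\delta$ with $\Phi_\epsilon(u_\epsilon^\delta)=u_\epsilon^\delta$, which is the asserted unique solution of \eqref{regularizedsolution2}.
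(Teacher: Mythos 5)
Your proposal is correct and follows essentially the same route as the paper: the same operator (the paper's $\Theta_{\ep,\delta}$), the same Parseval--H\"older--Lemma \ref{ineqlemma} estimate yielding the Volterra bound with constant $K^2\exp\left(2\epsilon^{-\frac{\alpha}{2}}\cos\frac{\alpha\pi}{4}\right)$, the same induction giving the $\frac{x^m}{m!}$ factor, and the same conclusion that some iterate is a contraction, whence a unique fixed point of the map itself. The only cosmetic difference is that you cite the generalized Banach fixed-point theorem where the paper writes out the one-line argument that a fixed point of $\Theta_{\ep,\delta}^{m_*}$ is fixed by $\Theta_{\ep,\delta}$.
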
 

\begin{proof} Define 
	\begin{align*}
&	\Theta_{\ep,\delta}(w)(x,t):=  \frac{1}{\sqrt{2\pi}}\int_{-\infty}^{+\infty}{ 
		\cosh \Big(k(\omega) x\Big) \hat{g}_{\ep}^\delta(\omega)  e^{i\omega t}d\omega}  + \frac{1}{\sqrt{2\pi}}\int_{-\infty}^{+\infty}{ \frac{\sinh \Big(k(\omega) x\Big)}{k(\omega)} \widehat{h}_{\ep}^\delta(\omega) e^{i\omega t}d\omega}        \nn\\
& \hspace*{2.2cm}	-  \frac{1}{\sqrt{2\pi}}\int_{-\infty}^{+\infty}\int_0^x  \frac{ \sinh {\Big (k(\omega)(x-z)\Big) }  }{k(\omega)} \widehat{f}_{\ep}(z,\omega,w(z,\omega))e^{i\omega t}dzd\omega,  
	\end{align*}  for all    $w\in C([0,1];L^2(\mathbb{R}))$.\\
	
	We  show that the  problem (\ref{regularizedsolution2}) has a   unique solution by proving  that $\Theta_{\ep,\delta}$ has a unique fixed point in $C([0,1];L^2(\mathbb{R}))$.  For $w_1, w_2\in C([0,1];L^2(\mathbb{R}))$ and $0\le x\le1$, we {\color{black}will} show  the following estimate
	\begin{align}
	  \|\Theta_{\ep,\delta}^m(w_1)(x,.)-\Theta_{\ep,\delta}^m(w_2)(x,.)\|^2_{L^2(\mathbb{R})}  \le \,\,& \left( K\exp\left(\epsilon^{-\frac{\alpha}{2}} \cos\frac{\alpha\pi}{4}\right) \right)^{2m} \frac{x^m}{m!}\,\, \|w_1-w_2\|^2 _{C([0,1];L^2(\mathbb{R}))} , \label{qn}
	\end{align} for all integer numbers $m \ge 1$ .
 For  $m=1$, we have  
	\begin{align}
	\,\,&  \Theta_{\ep,\delta}(w_1)(x,t)-\Theta_{\ep,\delta}(w_2)(x,t) \nn\\
	= \,\,&  - \frac{1}{\sqrt{2\pi}}\int_{-\infty}^{+\infty}\int_0^x  \frac{ \sinh {\Big (k(\omega)(x-z)\Big) }  }{k(\omega)} \left[\widehat{f}_\epsilon(z,\omega,w_1(z,\omega)) - \widehat{f}_\epsilon(z,\omega,w_2(z,\omega)) \right]dz e^{i\omega t}d\omega \nn\\
	= \,\,& \mathcal{F}^{-1}\left( -\int_0^x  \frac{ \sinh {\Big (k(\omega)(x-z)\Big) }  }{k(\omega)} \left[\widehat{f}_\epsilon(z,\omega,w_1(z,\omega)) - \widehat{f}_\epsilon(z,\omega,w_2(z,\omega)) \right]dz\right)(t). \label{FourierTransa}
	\end{align} 
By  applying the Parseval's Theorem to (\ref{FourierTransa}),  we get 
	\begin{align}
	& \hspace*{3.3cm} \|\Theta_{\ep,\delta}(w_1)(x,.)-\Theta_{\ep,\delta}(w_2)(x,.)\|_{L^2(\mathbb{R})}^2 \nn\\
	= \,\,& \int_{-\infty}^{+\infty}   \left| \int_0^x  \frac{ \sinh {\Big (k(\omega)(x-z)\Big) }  }{k(\omega)}   
	\left[\widehat{f}_\epsilon(z,\omega,w_1(z,\omega)) - \widehat{f}_\epsilon(z,\omega,w_2(z,\omega)) \right]dz\right|^2   d\omega. \label{ApplyParseval}
	\end{align}
	
	Applying the H\"older inequality to (\ref{ApplyParseval}), we obtain
	\begin{align}
	\,\,& \left| \int_0^x  \frac{ \sinh {\Big (k(\omega)(x-z)\Big) }  }{k(\omega)}   
	\left[\widehat{f}_\epsilon(z,\omega,w_1(z,\omega)) - \widehat{f}_\epsilon(z,\omega,w_2(z,\omega))\right] dz\right|^2 \nn\\
	\le \,\,&  x   \int_0^x \left| \frac{ \sinh {\Big (k(\omega)(x-z)\Big) }  }{k(\omega)}\right|^2  
	\left|\widehat{f}_\epsilon(z,\omega,w_1(z,\omega)) - \widehat{f}_\epsilon(z,\omega,w_2(z,\omega))\right|^2 dz   \nn\\
	\le \,\,&  x   \int_0^x (x-z)^2 \exp\Big(2\Re(k(\omega))(x-z)\Big)  
	\left|\widehat{f}_\epsilon(z,\omega,w_1(z,\omega)) - \widehat{f}_\epsilon(z,\omega,w_2(z,\omega))\right|^2 dz ,    \nn
	\end{align}
	 where $$\left| \frac{ \sinh {\Big (k(\omega)(x-z)\Big) }  }{k(\omega)}\right|^2 \le (x-z)^2 \exp\Big(2\Re(k(\omega))(x-z)\Big)$$ by using the Lemma \ref{ineqlemma}. Therefore, 
	\begin{align}
	&\|\Theta_{\ep,\delta}(w_1)(x,.)-\Theta_{\ep,\delta}(w_2)(x,.)\|_{L^2(\mathbb{R})}^2 \nn\\
	\le \,\,&  x   \int_0^x \int_{-\infty}^{+\infty}   (x-z)^2 \exp\Big(2\Re(k(\omega))(x-z)\Big) 
	\left|\widehat{f}_\epsilon(z,\omega,w_1(z,\omega)) - \widehat{f}_\epsilon(z,\omega,w_2(z,\omega))\right|^2 d\omega dz     \nn\\
	\le \,\,&  x   \int_0^x \int_{-\frac{1}{\ep}}^{\frac{1}{\ep}}   (x-z)^2 \exp\Big(2\Re(k(\omega))(x-z)\Big)  
	\left|\widehat{f}(z,\omega,w_1(z,\omega)) - \widehat{f}(z,\omega,w_2(z,\omega))\right|^2 d\omega dz     \nn\\
	\le \,\,&  x^3  \exp\left(2x\ep^{-{\frac{\alpha}{2}}}\cos\frac{\alpha\pi}{4}\right)   \int_0^x   
	\norm{ \widehat{f}(z,.,w_1(z,.)) - \widehat{f}(z,.,w_2(z,.)) }_{L^2(\mathbb{R})}^2  dz     \nn\\
	\le \,\,&   \exp\left(2\ep^{-{\frac{\alpha}{2}}}\cos\frac{\alpha\pi}{4} \right)   \int_0^x   
	\norm{ {f}(z,.,w_1(z,.)) -  {f}(z,.,w_2(z,.)) }_{L^2(\mathbb{R})}^2  dz     \nn\\
	\le \,\,&  K^2  \exp\left(2\ep^{-{\frac{\alpha}{2}}}\cos\frac{\alpha\pi}{4} \right)   \int_0^x   
	\norm{ w_1(z,.) -  w_2(z,.) }_{L^2(\mathbb{R})}^2  dz,     \label{thetaprove1}  
	\end{align} where  the Lipschitz property (\ref{f2}) has been used.  This {\color{black} immediately implies} that
	\begin{align}
	\|\Theta_{\ep,\delta}(w_1)(x,.)-\Theta_{\ep,\delta}(w_2)(x,.)\|_{L^2(\mathbb{R})}^2 \le \,\,& K^2 \exp\left( 2 \epsilon^{-\frac{\alpha}{2}} \cos\frac{\alpha\pi}{4} \right) x \|w_1-w_2\|^2_{C([0,1];L^2(\mathbb{R}))}, \nn
	\end{align} 
	so (\ref{qn}) holds for $m=1$. Assume that (\ref{qn}) holds  for $m=j$, $j\ge 1$, i.e.,
	\begin{align}
	\|\Theta_{\ep,\delta}^j(w_1)(x,.)-\Theta_{\ep,\delta}^j(w_2)(x,.)\|^2_{L^2(\mathbb{R})} \le \left( K \exp\left( \epsilon^{-\frac{\alpha}{2}} \cos\frac{\alpha\pi}{4} \right) \right)^{2j} \frac{x^j}{j!}\,\, \|w_1-w_2\|^2 _{C([0,1];L^2(\mathbb{R}))}. \nn
	\end{align} 
	We are going to prove that (\ref{qn}) holds  for $m=j+1$. Indeed, it follows from (\ref{thetaprove1}) that
	\begin{align}
	&\|\Theta_{\ep,\delta}^{j+1}(w_1)(x,.)-\Theta_{\ep,\delta}^{j+1}(w_2)(x,.)\|_{L^2(\mathbb{R})}^2 \nn\\
	\le\,\, & K^2 \exp\left( 2 \epsilon^{-\frac{\alpha}{2}} \cos\frac{\alpha\pi}{4} \right) 
	\int_{0}^{x} \|\Theta_{\ep,\delta}^{j}(w_1)(z,.)-\Theta_{\ep,\delta}^{j}(w_2)(z,.)\|_ {L^2(\mathbb{R})}^2dz\nn\\
	%%%%%%%%%%%%%%%%%%%%%%%%%%
	\le \,\, & K^2 \exp\left( 2 \epsilon^{-\frac{\alpha}{2}} \cos\frac{\alpha\pi}{4} \right) \int_{0}^{x} \left( K \exp\left( \epsilon^{-\frac{\alpha}{2}} \cos\frac{\alpha\pi}{4} \right) \right)^{2j} \frac{z^j}{j!} \,\,\|w_1-w_2\|^2 _{C([0,1];L^2(\mathbb{R}))} dz \nn\\
	%%%%%%%%%%%%%%%%%%%%%%%%%%
	\le \,\, & \left( K \exp\left( \epsilon^{-\frac{\alpha}{2}} \cos\frac{\alpha\pi}{4} \right) \right)^{2(j+1)} \frac{ x^{j+1} }{(j+1)!} \,\, \|w_1-w_2\|^2 _{C([0,1];L^2(\mathbb{R}))}. \label{takingsup}
	\end{align}
By induction principle,	we conclude that (\ref{qn})  holds for all integer number{\color{black}s} $m \ge 1$. Now, by taking the supremum of (\ref{takingsup}) in the variable $x$, we derive
	\begin{align}
 	\|\Theta_{\ep,\delta}^m(w_1)-\Theta_{\ep,\delta}^m(w_2)\|^2 _{C([0,1];L^2(\mathbb{R}))}   \le \left( K \exp\left( \epsilon^{-\frac{\alpha}{2}} \cos\frac{\alpha\pi}{4} \right) \right)^{2m} \frac{1}{m!}\,\, \|w_1-w_2\|^2 _{C([0,1];L^2(\mathbb{R}))}. \label{qn2}
	\end{align} 
	
	On the other hand, because 
	\begin{align*}
	\lim\limits_{m\to +\infty} \left( K \exp\left( \epsilon^{-\frac{\alpha}{2}} \cos\frac{\alpha\pi}{4} \right) \right)^{2m} \frac{1}{m!} = 0.
	\end{align*}
{\color{black} It follows} from (\ref{qn2})  that there exists an integer number $m_* \ge 1$ such that $\Theta_{\ep,\delta}^{m_*}$ is a contraction mapping. Thus,  there exists a unique fixed point of $\displaystyle\Theta_{\ep,\delta}^{m_*} \in C([0,1];L^2(\mathbb{R}))$ which denoted by $u_\epsilon^\delta$, i.e., $$\Theta_{\ep,\delta}^{m_*}(u_\epsilon^\delta)=u_\epsilon^\delta.$$
	Hence $\displaystyle\Theta_{\ep,\delta}^{m_*} \left(\Theta_{\ep,\delta} (u_\epsilon^\delta)\right)=\Theta_{\ep,\delta} (u_\epsilon^\delta)$, i.e., $\displaystyle\Theta_{\ep,\delta} (u_\epsilon^\delta)$ is also a fixed point of $\displaystyle\Theta_{\ep,\delta}^{m_*}$ in $C([0,1];L^2(\mathbb{R}))$. This implies that $\displaystyle\Theta_{\ep,\delta} (u_\epsilon^\delta) = u_\epsilon^\delta$, due to the uniqueness of the fixed point.
\end{proof} 

\subsection{$L^2$ estimate }

From now on,  let  $\delta>0$ be the error level and  $g^\delta, h^\delta \in L^2(\mathbb{R})$ be the measured data satisfy  (\ref{noisyassumption}).  Let $\ep:=\ep(\delta)>0$ be the regularization parameter and let $u_\epsilon^\delta$ be the regularized solution of (\ref{regularizedsolution2}) respectively. 

\begin{theorem}\label{theorem1} Let  $f$ be {\color{black}defined by} Lemma \ref{uniqlemma} and assume that the problem (\ref{exsolution}) has a  unique (exact) solution $u$ such that
	\begin{align}
	\int_{-\infty}^{+\infty}  \exp\Big(2(1-x)\Re(k(\omega)) \Big) \left|\hat{u}(x,\omega)  \right|^2 d\omega < M_1^2, \quad 0\le x <1, \label{assumptionu}
	\end{align}
	for $M_1>0$,  
	 then
	\begin{align}
	 \|u_{\epsilon}^\delta(x,.)-u(x,.)\|_{L^2(\mathbb{R})}
	\le     C_1  \exp\left( x \epsilon^{-\frac{\alpha}{2}} \cos\frac{\alpha\pi}{4}\right) \delta    + C_1  \exp\Big( (x-1)\ep^{-{\frac{\alpha}{2}}} \cos{\frac{\alpha \pi}{4}} \Big) \label{mainresult1} 
	\end{align}
where $C_1= 4e^{K^2} + 2M_1e^{K^2}$. As a consequence, if we choose $\displaystyle \epsilon= \left( \frac{\cos\frac{\alpha\pi}{4}}{\ln  \frac{1}{\delta}  } \right)^{\frac{2}{\alpha}}$ then 
	\begin{align}
	 \|u_{\epsilon}^\delta(x,.)-u(x,.)\|_{L^2(\mathbb{R})}
	\le   C_1 \delta^{1-x},\quad 0\le x <1. \label{mainresult2}
	\end{align}  
\end{theorem}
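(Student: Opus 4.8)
The plan is to estimate the error $\|u_\epsilon^\delta(x,\cdot)-u(x,\cdot)\|_{L^2(\mathbb{R})}$ by interposing a third function, namely the regularized solution driven by the \emph{exact} data. Concretely, I would define $v_\epsilon$ to be the solution of the truncated integral equation \eqref{regularizedsolution2} but with $g_\epsilon^\delta,h_\epsilon^\delta$ replaced by the exact truncated data $g_\epsilon := \widehat g\,\chi_{[-1/\epsilon,1/\epsilon]}$ and $h_\epsilon$ defined analogously; the existence and uniqueness of $v_\epsilon$ is guaranteed by Lemma \ref{uniqlemma}. Then I would split by the triangle inequality
\begin{align}
\|u_\epsilon^\delta(x,\cdot)-u(x,\cdot)\|_{L^2(\mathbb{R})}
\le \|u_\epsilon^\delta(x,\cdot)-v_\epsilon(x,\cdot)\|_{L^2(\mathbb{R})}
 + \|v_\epsilon(x,\cdot)-u(x,\cdot)\|_{L^2(\mathbb{R})}. \nonumber
\end{align}
The first term is the \emph{noise-propagation} (stability) error and the second is the \emph{approximation} (truncation) error; these should produce the two summands $C_1\exp(x\epsilon^{-\alpha/2}\cos\frac{\alpha\pi}{4})\delta$ and $C_1\exp((x-1)\epsilon^{-\alpha/2}\cos\frac{\alpha\pi}{4})$ respectively.

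\textbf{Bounding the stability term.}
For the first term I would apply Parseval and work on the truncated frequency band $|\omega|\le 1/\epsilon$. The difference $\widehat{u_\epsilon^\delta}-\widehat{v_\epsilon}$ satisfies an integral identity in which the source terms differ through the Lipschitz-controlled nonlinearity and the data differ by $\widehat{g^\delta}-\widehat g$, $\widehat{h^\delta}-\widehat h$. Using Lemma \ref{ineqlemma} to bound $|\cosh(k(\omega)x)|\le e^{x\Re(k(\omega))}$ and $|\sinh(k(\omega)x)/k(\omega)|\le x\,e^{x\Re(k(\omega))}$, together with the bound $\Re(k(\omega))\le \epsilon^{-\alpha/2}\cos\frac{\alpha\pi}{4}$ valid on the truncated band (which follows from \eqref{rexi} and $|\omega|\le 1/\epsilon$), I would obtain a Gr\"onwall-type integral inequality of the form
\begin{align}
\|u_\epsilon^\delta(x,\cdot)-v_\epsilon(x,\cdot)\|_{L^2(\mathbb{R})}^2
\le C\,e^{2x\epsilon^{-\alpha/2}\cos\frac{\alpha\pi}{4}}\delta^2
 + K^2 e^{2\epsilon^{-\alpha/2}\cos\frac{\alpha\pi}{4}}\!\int_0^x
 \|u_\epsilon^\delta(z,\cdot)-v_\epsilon(z,\cdot)\|_{L^2(\mathbb{R})}^2\,dz. \nonumber
\end{align}
Applying Gr\"onwall's inequality then yields the factor $e^{K^2}$ absorbed into $C_1$ and the exponential growth factor $\exp(x\epsilon^{-\alpha/2}\cos\frac{\alpha\pi}{4})\delta$, with the noise bound \eqref{noisyassumption} controlling the data discrepancy.

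\textbf{Bounding the approximation term and conclusion.}
For the second term, the difference $\widehat{v_\epsilon}-\widehat u$ splits into a part supported on the high frequencies $|\omega|>1/\epsilon$ (where $v_\epsilon$ uses no data, so only $\widehat u$ survives) plus a nonlinearity difference on the truncated band. On the high-frequency tail I would invoke the a priori bound \eqref{assumptionu}: writing $|\widehat u(x,\omega)|^2 = e^{-2(1-x)\Re(k(\omega))}\cdot e^{2(1-x)\Re(k(\omega))}|\widehat u(x,\omega)|^2$ and using that on $|\omega|>1/\epsilon$ one has $e^{-2(1-x)\Re(k(\omega))}\le \exp(2(x-1)\epsilon^{-\alpha/2}\cos\frac{\alpha\pi}{4})$, the tail integral is bounded by $M_1^2\exp(2(x-1)\epsilon^{-\alpha/2}\cos\frac{\alpha\pi}{4})$, giving the second summand. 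The nonlinearity difference on the band produces another Gr\"onwall argument identical in structure to the first, again contributing the $e^{K^2}$ factor. The main obstacle I anticipate is the careful bookkeeping of the coupled Gr\"onwall inequalities: because the source $\widehat f_\epsilon(z,\omega,w(z,\omega))$ depends on the unknown $w$ itself, the error at level $x$ is controlled by an integral of the error over $[0,x]$, so each estimate is implicit and must be closed via Gr\"onwall rather than a direct bound. Finally, substituting the explicit choice $\epsilon = \bigl(\cos\frac{\alpha\pi}{4}/\ln\frac1\delta\bigr)^{2/\alpha}$ gives $\epsilon^{-\alpha/2}\cos\frac{\alpha\pi}{4} = \ln\frac1\delta$, so $\exp(x\epsilon^{-\alpha/2}\cos\frac{\alpha\pi}{4})\delta = \delta^{1-x}$ and $\exp((x-1)\epsilon^{-\alpha/2}\cos\frac{\alpha\pi}{4}) = \delta^{1-x}$, collapsing \eqref{mainresult1} into the rate \eqref{mainresult2}.
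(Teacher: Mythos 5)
Your proposal follows the paper's proof essentially step for step: your $v_\epsilon$ is the paper's $u_\epsilon$ (the regularized solution driven by the exact truncated data, whose existence comes from Lemma \ref{uniqlemma}), your two-term triangle split matches the paper's Steps 1 and 2, your treatment of the approximation term reproduces the paper's further decomposition through the auxiliary quantity $\mathcal{P}_\epsilon(u)$ (exact solution inserted into the truncated equation: a nonlinearity difference on the band, closed by a second Gr\"onwall argument, plus the pure high-frequency tail $\int_{|\omega|>1/\epsilon}|\hat u(x,\omega)|^2\,d\omega$ controlled by the a priori bound \eqref{assumptionu}), and your concluding substitution $\epsilon^{-\alpha/2}\cos\frac{\alpha\pi}{4}=\ln\frac{1}{\delta}$ is identical to the paper's.

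One step, as you displayed it, would fail if executed literally. In your Gr\"onwall inequality for the stability term you replaced the kernel factor by the constant $e^{2\epsilon^{-\alpha/2}\cos\frac{\alpha\pi}{4}}$ multiplying the \emph{unweighted} integral $\int_0^x\|u_\epsilon^\delta(z,\cdot)-v_\epsilon(z,\cdot)\|_{L^2(\mathbb{R})}^2\,dz$. Applying Gr\"onwall to that inequality produces the factor $\exp\bigl(K^2e^{2\epsilon^{-\alpha/2}\cos\frac{\alpha\pi}{4}}x\bigr)$, which is doubly exponential in $\epsilon^{-\alpha/2}$, blows up as $\epsilon\to 0$, and destroys the rate $\delta^{1-x}$ — you would not get the $\epsilon$-independent factor $e^{K^2}$ that you claim is ``absorbed into $C_1$.'' The repair, which is precisely the paper's device, is to retain the $z$-dependence of the kernel bound, $\exp\bigl(2(x-z)\epsilon^{-\alpha/2}\cos\frac{\alpha\pi}{4}\bigr)$, pull out $\exp\bigl(2x\epsilon^{-\alpha/2}\cos\frac{\alpha\pi}{4}\bigr)$, and apply Gr\"onwall not to the error itself but to the weighted quantity $\mathcal{Y}_1(z)=\exp\bigl(-2z\epsilon^{-\alpha/2}\cos\frac{\alpha\pi}{4}\bigr)\|u_\epsilon^\delta(z,\cdot)-v_\epsilon(z,\cdot)\|_{L^2(\mathbb{R})}^2$, which satisfies $\mathcal{Y}_1(x)\le 8\delta^2+2K^2\int_0^x\mathcal{Y}_1(z)\,dz$ and hence $\mathcal{Y}_1(x)\le 8\delta^2e^{2K^2x}$; the same weighting is needed in the second Gr\"onwall argument for the nonlinearity difference (the paper's $\mathcal{Z}_1$). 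You clearly sensed this issue when you flagged the ``careful bookkeeping of the coupled Gr\"onwall inequalities'' as the main obstacle, but the weighting trick is the one idea your sketch does not actually supply, and without it the displayed estimate is too lossy to yield \eqref{mainresult1}.
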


\textcolor{black} {
\begin{remark}
	If $f=0$ and $h=0$ then since \eqref{solution3}, we have
	\begin{align}
		\widehat{u}(x,\omega)&= \cosh \Big(k(\omega) x\Big) \widehat{g}(\omega) .
	\end{align}
	Then the left-hand side of \eqref {assumptionu} is 
	\begin{align}
	\int_{-\infty}^{+\infty}  \exp\Big(2(1-x)\Re(k(\omega)) \Big) \left|\hat{u}(x,\omega)  \right|^2 d\omega
	 	&=  	\int_{-\infty}^{+\infty}  \exp\Big(2(1-x)\Re(k(\omega)) \Big) |\cosh \Big(k(\omega) x\Big) |^2 \left|\widehat{g}(\omega)   \right|^2 d\omega\nn\\
		&\le 	\int_{-\infty}^{+\infty}  \exp\Big(2\Re(k(\omega)) \Big) \left|\widehat{g}(\omega)   \right|^2 d\omega
	\end{align}
	where we have used $ |\cosh (z)|  \le e^{\Re{(z)}}$ since Lemma 2.1.	
	Moreover, we get
	\begin{align}
 k(\omega) \int_0^1 	\widehat{u} (z,\omega) dz=  \frac{e^{k(\omega) } - e^{-k(\omega )} }{2}  \widehat{g}(\omega) .
	\end{align}
	This implies that
	\[
	\widehat{u}(1,\omega)+  k(\omega) \int_0^1 	\widehat{u} (z,\omega) dz= e^{k(\omega)} \widehat{g}(\omega)
	\]
	Hence, we get
	\begin{align}
	\int_{-\infty}^{+\infty}  \exp\Big(2\Re(k(\omega)) \Big) \left|\widehat{g}(\omega)   \right|^2 d\omega & \le 2\int_{-\infty}^{+\infty}  |	\widehat{u}(1,\omega)|^2 d\omega+ 2 \int_{-\infty}^{+\infty}   |\omega|^\alpha 	\int_0^1 |\widehat{u} (z,\omega)|^2 dz d\omega\nn\\
	&=2\int_{-\infty}^{+\infty}  |	\widehat{u}(1,\omega)|^2 d\omega+ 2 \int_0^1 \int_{-\infty}^{+\infty} |\omega|^\alpha |\widehat{u} (z,\omega)|^2 d\omega dz\nn\\
	&=2 \|u(1.,)\|^2_{L^2(\mathbb R)}+   2 \|u \|^2_{L^2(0,1; H^{\frac{\alpha}{2}}(\mathbb R))} . \label{es1}
	\end{align}
	Thus, if $u \in L^2(0,1; H^{\frac{\alpha}{2}}(\mathbb R)) $ then \eqref{es1} holds.  Therefore, we can say that the condition $ \eqref {assumptionu} $ is nature and makes sense.
\end{remark}
}
\begin{proof} We only consider $0\le x < 1$ throughout this proof.  We  recall that 
\begin{align}
&	u_\ep^\delta(x,t) =  \frac{1}{\sqrt{2\pi}}\int_{-\infty}^{+\infty}{ 
		\cosh \Big(k(\omega) x\Big) \hat{g}_{\ep}^\delta(\omega)  e^{i\omega t}d\omega}  + \frac{1}{\sqrt{2\pi}}\int_{-\infty}^{+\infty}{ \frac{\sinh \Big(k(\omega) x\Big)}{k(\omega)} \widehat{h}_{\ep}^\delta(\omega) e^{i\omega t}d\omega}        \nn\\
& \hspace*{1.9cm}	-  \frac{1}{\sqrt{2\pi}}\int_{-\infty}^{+\infty}\int_0^x  \frac{ \sinh {\Big (k(\omega)(x-z)\Big) }  }{k(\omega)} \widehat{f}_{\ep}(z,\omega,u_\ep^\delta(z,\omega))e^{i\omega t}dzd\omega  \label{ued}
	\end{align}
and
\begin{align}
&	u(x,t) =  \frac{1}{\sqrt{2\pi}}\int_{-\infty}^{+\infty}{ 
		\cosh \Big(k(\omega) x\Big) \hat{g}(\omega)  e^{i\omega t}d\omega}  + \frac{1}{\sqrt{2\pi}}\int_{-\infty}^{+\infty}{ \frac{\sinh \Big(k(\omega) x\Big)}{k(\omega)} \widehat{h}(\omega) e^{i\omega t}d\omega}        \nn\\
& \hspace*{1.7cm}	-  \frac{1}{\sqrt{2\pi}}\int_{-\infty}^{+\infty}\int_0^x  \frac{ \sinh {\Big (k(\omega)(x-z)\Big) }  }{k(\omega)} \widehat{f}(z,\omega,u(z,\omega))e^{i\omega t}dzd\omega. \label{u}  
	\end{align}
 Applying  Lemma \ref{uniqlemma}, there   exists a unique function $u_\epsilon \in C([0,1];L^2(\mathbb{R}))$ satisfying 
\begin{align}
&	u_\ep(x,t) =  \frac{1}{\sqrt{2\pi}}\int_{-\infty}^{+\infty}{ 
		\cosh \Big(k(\omega) x\Big) \hat{g}_{\ep}(\omega)  e^{i\omega t}d\omega}  + \frac{1}{\sqrt{2\pi}}\int_{-\infty}^{+\infty}{ \frac{\sinh \Big(k(\omega) x\Big)}{k(\omega)} \widehat{h}_{\ep}(\omega) e^{i\omega t}d\omega}        \nn\\
&	\hspace*{1.9cm} -  \frac{1}{\sqrt{2\pi}}\int_{-\infty}^{+\infty}\int_0^x  \frac{ \sinh {\Big (k(\omega)(x-z)\Big) }  }{k(\omega)} \widehat{f}_{\ep}(z,\omega,u_\ep(z,\omega))e^{i\omega t}dzd\omega  \label{ue}
	\end{align}
 where  $\displaystyle \Big[\widehat{g}_\epsilon(\omega) \quad \widehat{h}_\epsilon(\omega) \quad \widehat{f}_\epsilon(z,\omega,w(z,\omega)) \Big] := \Big[\widehat{g}(\omega) \quad \widehat{h}(\omega) \quad \widehat{f}(z,\omega,w(z,\omega)) \Big]\chi_{\left[-\frac{1}{\epsilon};\frac{1}{\epsilon}  \right]}(\omega)$. In order to establish an estimate for $\displaystyle \|u_{\epsilon}^\delta(x,.)-u(x,.)\|_{L^2(\mathbb{R})}$, we  introduce the quantity $\mathcal{P}_\ep (u)$ defined as follows 
\begin{align}
&	\mathcal{P}_\ep(u)(x,t) =  \frac{1}{\sqrt{2\pi}}\int_{-\infty}^{+\infty}{ 
		\cosh \Big(k(\omega) x\Big) \hat{g}_\ep(\omega)  e^{i\omega t}d\omega}  + \frac{1}{\sqrt{2\pi}}\int_{-\infty}^{+\infty}{ \frac{\sinh \Big(k(\omega) x\Big)}{k(\omega)} \widehat{h}_\ep(\omega) e^{i\omega t}d\omega}        \nn\\
& \hspace*{2.2cm}	- \frac{1}{\sqrt{2\pi}}\int_{-\infty}^{+\infty}\int_0^x  \frac{ \sinh {\Big (k(\omega)(x-z)\Big) }  }{k(\omega)} \widehat{f}_\ep(z,\omega,u(z,\omega))e^{i\omega t}dzd\omega , \label{peu} 
	\end{align} 
where $u$ is the exact solution.
The triangle inequality {\color{black}shows} that  
	\begin{align}
	\|u_{\epsilon}^\delta(x,.)-u(x,.)\|^2_{L^2(\mathbb{R})}\le &\,\, 2 \|u_{\epsilon}^\delta(x,.)-{u_\epsilon}(x,.)\|^2_{L^2(\mathbb{R})}   +2 \|u_\ep(x,.)-u(x,.)\|^2_{L^2(\mathbb{R})}  . \label{pl2etrin1}
	\end{align}
Next, we divide  the proof  into three steps.
	
	\vspace*{0.4cm}
	
	\noindent {\bf Step 1:} Estimate for  $\|u_{\epsilon}^\delta(x,.)-{u_\epsilon}(x,.)\|^2_{L^2(\mathbb{R})}$. It follows from (\ref{ued}), and (\ref{ue}) that
\begin{align}
	\,\,& \hspace*{6.5cm} u_{\epsilon}^\delta(x,t)-{u_\epsilon}(x,t)\nn\\
	  = \,\,& \frac{1}{\sqrt{2\pi}}\int_{-\infty}^{+\infty}{ 
		\cosh \Big(k(\omega) x\Big) \left[\hat{g}_{\epsilon}^\delta(\omega)-\hat{g}_{\epsilon}(\omega) \right]  e^{i\omega t}d\omega}   + \frac{1}{\sqrt{2\pi}}\int_{-\infty}^{+\infty}{ \frac{\sinh \Big(k(\omega) x\Big)}{k(\omega)} \left[\hat{h}_{\epsilon}^\delta(\omega)-\hat{h}_{\epsilon}(\omega) \right] e^{i\omega t}d\omega}        \nn\\
& 	-  \frac{1}{\sqrt{2\pi}}\int_{-\infty}^{+\infty}\int_0^x  \frac{ \sinh {\Big (k(\omega)(x-z)\Big) }  }{k(\omega)} \left[ \widehat{f}_\epsilon(z,\omega,u_{\epsilon}^\delta(z,\omega)) - \widehat{f}_\epsilon(z,\omega,u_{\epsilon}(z,\omega)) \right]e^{i\omega t}dzd\omega   \nn
\end{align}	
which can be represented by the inverse Fourier transform as
\begin{align}
 & \hspace*{5.2cm} u_{\epsilon}^\delta(x,t)-{u_\epsilon}(x,t) \nn\\
 & \hspace*{0.2cm} =  \mathcal{F}^{-1}\left( \cosh \Big(k(\omega) x\Big) \left[\hat{g}_{\epsilon}^\delta(\omega)-\hat{g}_{\epsilon}(\omega) \right] + \frac{\sinh \Big(k(\omega) x\Big)}{k(\omega)} \left[\hat{h}_{\epsilon}^\delta(\omega)-\hat{h}_{\epsilon}(\omega) \right]\right)(t)  \nn\\
 & +  \mathcal{F}^{-1}\left(-\int_0^x  \frac{ \sinh {\Big (k(\omega)(x-z)\Big) }  }{k(\omega)} \left[ \widehat{f}_\epsilon(z,\omega,u_{\epsilon}^\delta(z,\omega)) - \widehat{f}_\epsilon(z,\omega,u_{\epsilon}(z,\omega)) \right] dz\right)(t) . \label{pl2step1a}
\end{align}	
By applying the Parseval's Theorem to (\ref{pl2step1a}), we derive 
\begin{align}
 & \hspace*{4.5cm} \|u_{\epsilon}^\delta(x,.)-{u_\epsilon}(x,.)\|^2_{L^2(\mathbb{R})}\nn\\
& \le   2\underbrace{\int_{-\infty}^{+\infty} \left| \cosh \Big(k(\omega) x\Big) \left[\hat{g}_{\epsilon}^\delta(\omega)-\hat{g}_{\epsilon}(\omega) \right] + \frac{\sinh \Big(k(\omega) x\Big)}{k(\omega)} \left[\hat{h}_{\epsilon}^\delta(\omega)-\hat{h}_{\epsilon}(\omega) \right]  \right|^2 d\omega}_{I_{1}}  \nn\\
&+2\underbrace{\int_{-\infty}^{+\infty} \left|     \int_0^x  \frac{ \sinh {\Big (k(\omega)(x-z)\Big) }  }{k(\omega)} \left[ \widehat{f}_\epsilon(z,\omega,u_{\epsilon}^\delta(z,\omega)) - \widehat{f}_\epsilon(z,\omega,u_{\epsilon}(z,\omega)) \right] dz \right|^2 d\omega}_{I_{2}}. \label{pl2step1b}
\end{align}
We continue to estimate  $I_{1}$ as follows:
	\begin{align}
	I_{1} \le \,\, &  2\int_{-\infty}^{+\infty} \left| \cosh \Big(k(\omega) x\Big)\right|^2 \left|\hat{g}_{\epsilon}^\delta(\omega)-\hat{g}_{\epsilon}(\omega) \right|^2 d\omega + 2\int_{-\infty}^{+\infty} \left| \frac{\sinh \Big(k(\omega) x\Big)}{k(\omega)} \right|^2 \left|\hat{h}_{\epsilon}^\delta(\omega)-\hat{h}_{\epsilon}(\omega) \right|^2 d\omega \nn \\
	%%%%%%%%%%%
	\le \,\, &  2\int_{-\frac{1}{\ep}}^{\frac{1}{\ep}} \left| \cosh \Big(k(\omega) x\Big)\right|^2 \left|\hat{g} ^\delta(\omega)-\hat{g} (\omega) \right|^2 d\omega   + 2\int_{-\frac{1}{\ep}}^{\frac{1}{\ep}} \left| \frac{\sinh \Big(k(\omega) x\Big)}{k(\omega)} \right|^2 \left|\hat{h} ^\delta(\omega)-\hat{h} (\omega) \right|^2 d\omega \nn \\
	%%%%%%%%%%%%%%%%%
	\le \,\, &  2\int_{-\frac{1}{\ep}}^{\frac{1}{\ep}} \exp\Big(2x\Re(k(\omega))\Big) \left|\hat{g} ^\delta(\omega)-\hat{g} (\omega) \right|^2 d\omega   + 2\int_{-\frac{1}{\ep}}^{\frac{1}{\ep}} x^2 \exp\Big(2x\Re(k(\omega))\Big) . \left|\hat{h} ^\delta(\omega)-\hat{h} (\omega) \right|^2 d\omega, \nn 
	\end{align} where $$\left| \cosh \Big(k(\omega) x\Big)\right|^2 \le \exp\Big(2x\Re(k(\omega))\Big),$$ and $$\left| \frac{\sinh \Big(k(\omega) x\Big)}{k(\omega)} \right|^2 \le x^2 \exp\Big(2x\Re(k(\omega))\Big) $$ by applying Lemma \ref{ineqlemma}. 
	On the other hand, for $\displaystyle |\omega| \le \frac{1}{\ep}$ we have $$\exp\Big(2x\Re(k(\omega))\Big) \le \exp\left(2x \epsilon^{-\frac{\alpha}{2}} \cos\frac{\alpha\pi}{4}\right).$$ Thus we get
	\begin{align}
	%%%%%%%%%%%%%%%%%
	I_{1} \le \,\,& 2 \exp\left(2x \epsilon^{-\frac{\alpha}{2}} \cos\frac{\alpha\pi}{4}\right) \| \hat{g}^\delta -\hat{g} \|_{L^2(\mathbb{R})}^2  +  2x^2 \exp\left(2x \epsilon^{-\frac{\alpha}{2}} \cos\frac{\alpha\pi}{4}\right) \| \hat{h}^\delta -\hat{h}\|_{L^2(\mathbb{R})}^2 .\nn
	\end{align} 
	Consequently, we can obtain 
	\begin{align}
	%%%%%%%%%%%%%%%%%
	I_{1}  \le 4 \exp\left(2x \epsilon^{-\frac{\alpha}{2}} \cos\frac{\alpha\pi}{4}\right)\delta^2,   \label{pl2step1c}
	\end{align} where the fact that $x \le 1$ and the condition (\ref{noisyassumption}) have been used. 
	Applying the H\"older's inequality and  Lemma \ref{ineqlemma}, it yields 
we obtain
	\begin{align}
  & \hspace*{0.3cm}	I_{2} \le  \int_{-\infty}^{+\infty}   x   \int_0^x \left| \frac{ \sinh {\Big (k(\omega)(x-z)\Big) }  }{k(\omega)} \right|^2  \left| \widehat{f}_\epsilon(z,\omega,u_{\epsilon}^\delta(z,\omega)) - \widehat{f}_\epsilon(z,\omega,u_{\epsilon}(z,\omega)) \right|^2 dz  d\omega\nn\\
	%%%%%%%%%%%%%%
	 &   \le  x  \int_0^x  \int_{-\frac{1}{\epsilon}}^{\frac{1}{\epsilon}}  (x-z)^2\exp\Big( 2(x-z)\Re(k(\omega)) \Big) \left|\widehat{f}(z,\omega,u_{\epsilon}^\delta(z,\omega)) - \widehat{f}(z,\omega,u_{\epsilon}(z,\omega))  \right|^2d\omega dz\nn .
	\end{align} 
{\color{black}It} follows from $x(x-z)^2\le 1$ and $\exp\Big(2(x-z)\Re(k(\omega))\Big) \le \exp\left(2(x-z) \epsilon^{-\frac{\alpha}{2}} \cos\frac{\alpha\pi}{4}\right)$
for $0\le z\le x\le 1$, $\displaystyle |\omega| \le \frac{1}{\ep}$  that
\begin{align}
\,\,&  	I_{2}  
	\le  \exp\left(2x\ep^{-{\frac{\alpha}{2}}} \cos{\frac{\alpha \pi}{4}}\right)   \int_0^x \exp\left(-2z\ep^{-{\frac{\alpha}{2}}} \cos{\frac{\alpha \pi}{4}}\right) \times \nn\\
	& \hspace*{0.2cm} \times \int_{-\frac{1}{\epsilon}}^{\frac{1}{\epsilon}}        \left|\widehat{f}(z,\omega,u_{\epsilon}^\delta(z,\omega)) - \widehat{f}(z,\omega,u_{\epsilon}(z,\omega))  \right|^2 d\omega  dz.\nn
\end{align}
Using Paserval's identity and the Lipschitz condition (\ref{f2}), we obtain
\begin{align}
& \int_{-\frac{1}{\epsilon}}^{\frac{1}{\epsilon}}        \left|\widehat{f}(z,\omega,u_{\epsilon}^\delta(z,\omega)) - \widehat{f}(z,\omega,u_{\epsilon}(z,\omega))  \right|^2 d\omega  dz \nn\\
  \le \,\,&   \norm{\widehat{f}(z,.,u_{\epsilon}^\delta(z,.)) - \widehat{f}(z,.,u_{\epsilon}(z,.))}_{L^2(\mathbb{R})}^2   \nn\\
= \,\,& \norm{ {f}(z,.,u_{\epsilon}^\delta(z,.)) -  {f}(z,.,u_{\epsilon}(z,.))}_{L^2(\mathbb{R})}^2 \nn\\
\le \,\,& K^2 \norm{ u_{\epsilon}^\delta(z,.) -  u_{\epsilon}(z,.)}_{L^2(\mathbb{R})}^2 . \nn
\end{align} 
 Therefore, we derive
\begin{align}
&	I_{2}  
	\le    K^2 \exp\left(2x\ep^{-{\frac{\alpha}{2}}} \cos{\frac{\alpha \pi}{4}}\right)  \int_0^x \exp\left(-2z\ep^{-{\frac{\alpha}{2}}} \cos{\frac{\alpha \pi}{4}}\right) K^2 \norm{ u_{\epsilon}^\delta(z,.) -  u_{\epsilon}(z,.)}_{L^2(\mathbb{R})}^2  dz.\nn\\
& \hspace*{3.5cm}	\le \,\, K^2 \exp\left( 2x\ep^{-{\frac{\alpha}{2}}} \cos{\frac{\alpha \pi}{4}} \right) \int_0^x \mathcal{Y}_1(z)dz, \label{pl2step1d}
	\end{align}
where we put  $$\displaystyle \mathcal{Y}_1(z):=\exp\left(-2z\epsilon^{-\frac{\alpha}{2}} \cos{\frac{\alpha \pi}{4}}\right) \| u_{\epsilon}^\delta(z,.) - u_{\epsilon}(z,.) \|_{L^2(\mathbb{R})}^2 \ge 0$$ for $0\le z\le x$.	It follows from (\ref{pl2step1b}), (\ref{pl2step1c}), and (\ref{pl2step1d}) that
	\begin{align}
	& \hspace*{1.9cm} \|u_{\epsilon}^\delta(x,.)-{u_\epsilon}(x,.)\|^2_{L^2(\mathbb{R})}  \le   2I_1+2I_2 \nn\\
	  \le  \,\,& 8  \exp\left(2x \epsilon^{-\frac{\alpha}{2}} \cos\frac{\alpha\pi}{4} \right)\delta^2   + 2K^2  \exp\left(2x\epsilon^{-\frac{\alpha}{2}} \cos{\frac{\alpha \pi}{4}}\right) \int_0^x  \mathcal{Y}_1(z) dz,\label{pl2gwa}
	\end{align}
	which is equivalent to 
	\begin{align}
	\mathcal{Y}_1(x) \le 8\delta^2     + 2K^2 \int_0^x  \mathcal{Y}_1(z) dz. \label{pl2step1e}
	\end{align}
	Applying the Gronwall's inequality to (\ref{pl2step1e}), we obtain
	\begin{align}
	\mathcal{Y}_1(x) \le 8\delta^2 e^{2K^2x}   .\nn 
	\end{align}
	Hence,
	\begin{align}
	\|u_{\epsilon}^\delta(x,.)-{u_\epsilon}(x,.)\|^2_{L^2(\mathbb{R})} &\,\, \le 8  e^{2K^2 }  \exp\left(2x \epsilon^{-\frac{\alpha}{2}} \cos\frac{\alpha\pi}{4}\right) \delta^2  .\label{pl2step1e}
	\end{align}
	
\vspace*{0.3cm}
	
	%%%%%%%%%%%%%%%%%%%%%%%%%%%%%%%%%%%%%%%%%%%%%%%%%%%%%%%%%%%%%%%%%%%%%%%%%%%%%%%%%%%%%%%%%%%%%%%%%%%%%%%%%%%%%%%%%%%%%%%%%%%%%%%
\noindent {\bf Step 2:} Estimate for  $\|u_\ep(x,.)-u(x,.)\|^2_{L^2(\mathbb{R})}$. We have
\begin{align}
	\|u_\ep(x,.)-u(x,.)\|^2_{L^2(\mathbb{R})}\le &\,\,    2\|u_\ep(x,.)-\mathcal{P}_\ep(u)(x,.)\|^2_{L^2(\mathbb{R})}  + 2  \|\mathcal{P}_\ep(u)(x,.)-u(x,.)\|^2_{L^2(\mathbb{R})}. \label{pl2etrin2}
	\end{align}  We    split this step into three sub-steps. The first one is estimating $\|u_\ep(x,.)-\mathcal{P}_\ep(u)(x,.)\|^2_{L^2(\mathbb{R})}$ and the second one is estimating $\|\mathcal{P}_\ep(u)(x,.)-u(x,.)\|^2_{L^2(\mathbb{R})}$. The last one will combine two first ones to obtain an estimate for $\|u_\ep(x,.)-u(x,.)\|^2_{L^2(\mathbb{R})}$. \\

\noindent {\bf Sub-step 2a:} Estimate for $\|u_\ep(x,.)-\mathcal{P}_\ep(u)(x,.)\|^2_{L^2(\mathbb{R})}$. By subtracting the equations (\ref{ue}) and (\ref{peu}), we derive
\begin{align}
  u_\ep(x,t)-\mathcal{P}_\ep(u)(x,t)   
 =    \mathcal{F}^{-1}\left(-\int_0^x  \frac{ \sinh {\Big (k(\omega)(x-z)\Big) }  }{k(\omega)} \left[ \widehat{f}_\epsilon(z,\omega,u_{\epsilon} (z,\omega)) - \widehat{f}_\epsilon(z,\omega,u(z,\omega)) \right] dz\right)(t). \nn
\end{align}	
By using the same proof as for (\ref{pl2step1d}), we obtain
\begin{align}
\,\,& \|u_\ep(x,.)-\mathcal{P}_\ep(u)(x,.)\|^2_{L^2(\mathbb{R})} \le   K^2 \exp\left( 2x\ep^{-{\frac{\alpha}{2}}} \cos{\frac{\alpha \pi}{4}} \right) \int_0^x \mathcal{Z}_1(z)dz, \label{pl2step2aa}
\end{align} where $\displaystyle \mathcal{Z}_1(z):=\exp\left(-2z\epsilon^{-\frac{\alpha}{2}} \cos{\frac{\alpha \pi}{4}}\right) \| u_{\epsilon} (z,.) - u (z,.) \|_{L^2(\mathbb{R})}^2 \ge 0$ for all $0\le z\le x$.\\

\noindent {\bf Sub-step 2b:} Estimate for  $\|\mathcal{P}_\ep(u)(x,.)-u(x,.)\|^2_{L^2(\mathbb{R})}$. By subtracting the equations (\ref{ue}) and (\ref{peu}), we obtain
\begin{align}
	 & \hspace*{6.3cm} \mathcal{P}_\ep(u)(x,t)-u(x,t)\nn\\
	&  =  \frac{1}{\sqrt{2\pi}}\int_{-\infty}^{+\infty}{ 
		\cosh \Big(k(\omega) x\Big) \left[\hat{g}_{\epsilon} (\omega)-\hat{g} (\omega) \right]  e^{i\omega t}d\omega}   + \frac{1}{\sqrt{2\pi}}\int_{-\infty}^{+\infty}{ \frac{\sinh \Big(k(\omega) x\Big)}{k(\omega)} \left[\hat{h}_{\epsilon} (\omega)-\hat{h} (\omega) \right] e^{i\omega t}d\omega}        \nn\\
& \hspace*{1.3cm} 	-  \frac{1}{\sqrt{2\pi}}\int_{-\infty}^{+\infty}\int_0^x  \frac{ \sinh {\Big (k(\omega)(x-z)\Big) }  }{k(\omega)} \left[ \widehat{f}_\epsilon(z,\omega,u(z,\omega)) - \widehat{f}(z,\omega,u(z,\omega)) \right]e^{i\omega t}dzd\omega,   \nn
\end{align}	
which can be represented by the inverse Fourier transform as follows
\begin{align}
 & \hspace*{4.7cm} \mathcal{P}_\ep(u)(x,t)-u(x,t) \nn\\
& \hspace*{0.3cm} =   \mathcal{F}^{-1}\left( \cosh \Big(k(\omega) x\Big) \left[\hat{g}_{\epsilon} (\omega)-\hat{g} (\omega) \right] + \frac{\sinh \Big(k(\omega) x\Big)}{k(\omega)} \left[\hat{h}_{\epsilon} (\omega)-\hat{h} (\omega) \right]\right)(t)  \nn\\
 & +  \mathcal{F}^{-1}\left(-\int_0^x  \frac{ \sinh {\Big (k(\omega)(x-z)\Big) }  }{k(\omega)} \left[ \widehat{f}_\epsilon(z,\omega,u(z,\omega)) - \widehat{f}(z,\omega,u(z,\omega)) \right] dz\right)(t) \nn\\
& \hspace*{3.7cm} =    \mathcal{F}^{-1}\left( -   \hat{u}(x,\omega)   \chi_{\mathbb{R} \setminus \left[-\frac{1}{\epsilon};\frac{1}{\epsilon}\right]}(\omega) \right).
 \label{pl2nopeumu}
\end{align}	 Taking $L^2(\mathbb{R})$-norm of $\mathcal{P}_\ep(u)(x,t)-u(x,t)$ and applying the Parseval's identity, we get 
\begin{align}
  \norm{\mathcal{P}_\ep(u)(x,.)-u(x,.)}^2_{L^2(\mathbb{R})}  =  \int_{\mathbb{R} \setminus \left[-\frac{1}{\epsilon};\frac{1}{\epsilon}\right] } \left|\hat{u}(x,\omega)  \right|^2 d\omega.\nn 
\end{align}
Therefore,
\begin{align}
& \hspace*{3.8cm}  \norm{\mathcal{P}_\ep(u)(x,.)-u(x,.)}^2_{L^2(\mathbb{R})} \nn\\ 
& =    \int_{\mathbb{R} \setminus  \left[-\frac{1}{\epsilon};\frac{1}{\epsilon}\right] }  
\exp\Big(2(x-1)\Re(k(\omega)) \Big)
 \left|\hat{u}(x,\omega)  \right|^2 \exp\Big(2(1-x)\Re(k(\omega)) \Big) d\omega \nn\\
& \le   \exp\Big(2(x-1)\ep^{-{\frac{\alpha}{2}}} \cos{\frac{\alpha \pi}{4}} \Big) \int_{\mathbb{R} \setminus \left[-\frac{1}{\epsilon};\frac{1}{\epsilon}\right] }  \exp\Big(2(1-x)\Re(k(\omega)) \Big) \left|\hat{u}(x,\omega)  \right|^2  d\omega, \nn 
\end{align} 
where  we have used that  $$\displaystyle \exp\Big(2(x-1)\Re(k(\omega)) \Big)=\exp\Big(2(x-1) |w|^{ \frac{\alpha}{2}}\cos{\frac{\alpha \pi}{4}} \Big)  \le  \exp\Big(2(x-1)\ep^{-{\frac{\alpha}{2}}} \cos{\frac{\alpha \pi}{4}} \Big),$$ for all $ \displaystyle |\omega| \le \frac{1}{\ep} $.
On the other hand, it follows from the assumption (\ref{assumptionu2}) that  $$\int_{\mathbb{R} \setminus \left[-\frac{1}{\epsilon};\frac{1}{\epsilon}\right] }  \exp\Big(2(1-x)\Re(k(\omega)) \Big) \left|\hat{u}(x,\omega)  \right|^2  d\omega \le M_1^2.$$
Hence, 
\begin{align}
  \norm{\mathcal{P}_\ep(u)(x,.)-u(x,.)}^2_{L^2(\mathbb{R})}  
\le M_1^2  \exp\Big(2(x-1)\ep^{-{\frac{\alpha}{2}}} \cos{\frac{\alpha \pi}{4}} \Big). \label{pl2step2ba}
\end{align}

\noindent {\bf Sub-step 2c:} Estimate   for $\|u_\ep(x,.)-u(x,.)\|^2_{L^2(\mathbb{R})}$. Combining (\ref{pl2etrin2}),   (\ref{pl2step2aa}) and  (\ref{pl2step2ba}), we derive 
\begin{align}
   &  \|u_\ep(x,.)-u(x,.)\|^2_{L^2(\mathbb{R})}  \le 2M_1^2 \exp\Big(2(x-1)\ep^{-{\frac{\alpha}{2}}} \cos{\frac{\alpha \pi}{4}} \Big)\nn\\
   & \hspace*{1.2cm}  +   2K^2  \exp\left(2x\ep^{-{\frac{\alpha}{2}}} \cos{\frac{\alpha \pi}{4}}\right)   \int_0^x \mathcal{Z}_1(z) dz.
\end{align}
This implies that
\begin{align}
 \mathcal{Z}_1(x)\le & 2 M_1^2 \exp\Big(-2\ep^{-{\frac{\alpha}{2}}} \cos{\frac{\alpha \pi}{4}} \Big)   +   2K^2      \int_0^x \mathcal{Z}_1(z)  dz. \label{l2step2f}
\end{align} Therefore, 
$$\mathcal{Q}_1(x)\le 2M_1^2 \exp\Big(-2\ep^{-{\frac{\alpha}{2}}} \cos{\frac{\alpha \pi}{4}} \Big) e^{2K^2x}$$ by applying Gronwall's inequality to (\ref{l2step2f}). Thus,
	\begin{align}
	 \|u_\ep(x,.)-u(x,.)\|^2_{L^2(\mathbb{R})}  &\,\,\le 2M_1^2e^{2K^2 }  \exp\Big(2(x-1)\ep^{-{\frac{\alpha}{2}}} \cos{\frac{\alpha \pi}{4}} \Big)  . \label{pl2step2ca}
	\end{align}

	\noindent {\bf Step 3:}  Estimate for  $\|u_{\epsilon}^\delta(x,.)-u(x,.)\|_{L^2(\mathbb{R})}$. Since (\ref{pl2etrin1}), (\ref{pl2step1e}) and (\ref{pl2step2ca}), we obtain
	\begin{align}
 \,\,&	\|u_{\epsilon}^\delta(x,.)-u(x,.)\|^2_{L^2(\mathbb{R})}  
	 	\le  16  e^{2K^2 }  \exp\left(2 x \epsilon^{-\frac{\alpha}{2}} \cos\frac{\alpha\pi}{4}\right) \delta^2  \,\,+\,\,  4M_1^2e^{2K^2 } \exp\Big(2(x-1)\ep^{-{\frac{\alpha}{2}}} \cos{\frac{\alpha \pi}{4}} \Big){\color{black}.}   \nn  
	\end{align} {\color{black}S}o
	\begin{align}
	\|u_{\epsilon}^\delta(x,.)-u(x,.)\|_{L^2(\mathbb{R})}
	\le     4 e^{ K^2 }  \exp\left( x \epsilon^{-\frac{\alpha}{2}} \cos\frac{\alpha\pi}{4}\right) \delta    + 2M_1e^{K^2}  \exp\Big( (x-1)\ep^{-{\frac{\alpha}{2}}} \cos{\frac{\alpha \pi}{4}} \Big)  ,   \label{error}  
	\end{align} i.e., (\ref{mainresult1}) {\color{black} is} proved. In addition, substituting $\displaystyle \epsilon= \left( \frac{\cos\frac{\alpha\pi}{4}}{\ln  \frac{1}{\delta}  } \right)^{\frac{2}{\alpha}}$ into (\ref{error}) we get
	\begin{align}
	   \|u_{\epsilon}^\delta(x,.)-u(x,.)\|_{L^2(\mathbb{R})}
	\le   C_1 \delta^{1-x} .
	\end{align} 
\end{proof}

\subsection{$H^p$ estimate }

In {\color{black} Theorem \ref{theorem1}}, an estimate of $\|u_{\epsilon}^\delta(x,.)-u(x,.)\|_{L^2(\mathbb{R})}$ {\color{black} was}  given  according to the a priori condition (\ref{assumptionu}). To obtain a result in $H^p(\mathbb{R})$,  we  assume that the exact solution $u$ satisfies the stronger condition (\ref{assumptionu2}).   Recall that $H^p(\mathbb{R})$ is defined in Section 2.    

\begin{theorem}\label{theorem2} Let  $f$ be {\color{black} defined by} Lemma \ref{uniqlemma}.  Suppose that there exist  constants $M_2>0$,\, $\displaystyle   \gamma >0$\,  and \,$\mu>\max\{4-\alpha;\,4p-\alpha\}$\, such that 
	\begin{align}
		\int_{-\infty}^{+\infty}   \exp\left(2(1+\gamma-x) |\omega|^{ \frac{\alpha+\mu }{2}} \cos\frac{\alpha\pi}{4}\right) \left|\hat{u}(x,\omega) \right|^2  d\omega < M_2^2, \qquad 0\le x< 1 . \label{assumptionu2}
	\end{align}  Let us choose   $\displaystyle \delta $  such that
\begin{align}
\displaystyle  \left( \frac{A+B}{\ln   \frac{1}{\delta} }\right)^\frac{2}{\alpha+\mu} < \min\left\{ 1;\, \left(\frac{B}{A}\right)^{\frac{1}{\frac{\alpha+\mu}{2}-q}};\, \left[\frac{\frac{\alpha+\mu}{2}\gamma  \cos\frac{\alpha\pi}{4}}{p}\right]^{\frac{2}{\alpha+\mu}} \right\} \label{deltachoosing}
\end{align}
and choose\, $ \displaystyle \epsilon=\left( \frac{A+B}{\ln   \frac{1}{\delta} }\right)^\frac{2}{\alpha+\mu}$\,
	then
\begin{align}
\|u_{\epsilon}^\delta(x,.)-u(x,.)\|_{H^p(\mathbb{R})}  
    \le    &\,\,    C_2\delta^{\frac{B}{A+B}}  , \label{result2b}
	\end{align} for $0\le x<1$
	where  
	$ \displaystyle A  = \frac{p}{2}  +1+ K^22^p , \,   \displaystyle B=\frac{1}{2}\gamma \cos\frac{\alpha \pi}{4}, \,  \displaystyle q  = \max\{\,2;\,\frac{\alpha}{2};\,2p\},\,  C_2= 4+2M_2.$ 
\end{theorem}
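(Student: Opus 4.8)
The plan is to follow the three--step architecture of the proof of Theorem~\ref{theorem1}, but to carry the Sobolev weight $(1+\omega^2)^p$ through every frequency integral and to replace the crude high--frequency estimate used there by the sharp monotonicity bound of Lemma~\ref{ineqlemma2}. First I note that, since the data and the source are cut off by $\chi_{[-1/\ep;1/\ep]}$, the transform $\widehat{u_\ep^\delta}(x,\cdot)$ is supported in $\{|\omega|\le 1/\ep\}$, so $u_\ep^\delta(x,\cdot)\in H^p(\mathbb R)$ and the left--hand side of \eqref{result2b} is finite. Introducing, exactly as in Theorem~\ref{theorem1}, the intermediate function $u_\ep$ (the regularized problem with exact but truncated data) and the auxiliary quantity $\mathcal P_\ep(u)$, I split
\[
\|u_\ep^\delta(x,.)-u(x,.)\|_{H^p(\mathbb R)}^2\le 2\|u_\ep^\delta(x,.)-u_\ep(x,.)\|_{H^p(\mathbb R)}^2+2\|u_\ep(x,.)-u(x,.)\|_{H^p(\mathbb R)}^2 .
\]
Throughout I abbreviate $\nu=\tfrac{\alpha+\mu}{2}$ and $q=\max\{2,\tfrac\alpha2,2p\}$, and I use repeatedly that $\nu>q$ (which is precisely $\mu>\max\{4-\alpha,4p-\alpha\}$) together with $\ep<1$ (the first entry of \eqref{deltachoosing}), so that $\ep^{-2},\ep^{-\alpha/2},\ep^{-2p}\le\ep^{-q}\le\ep^{-\nu}$.

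For the data term I repeat Step~1 of Theorem~\ref{theorem1} with the weight inserted. On the band $|\omega|\le1/\ep$ one has $(1+\omega^2)^p\le(1+\ep^{-2})^p$, and the bounds on $|\cosh(k(\omega)x)|$ and $|\sinh(k(\omega)x)/k(\omega)|$ of Lemma~\ref{ineqlemma} apply as before. Since \eqref{f2} is only an $L^2$ Lipschitz statement, I first extract the weight as $(1+\ep^{-2})^p$ and only then invoke \eqref{f2}, closing the Gronwall loop with $\|\cdot\|_{L^2(\mathbb R)}\le\|\cdot\|_{H^p(\mathbb R)}$; after factoring out $\exp(-2z\ep^{-\alpha/2}\cos\frac{\alpha\pi}{4})$ as in Step~1, Gronwall's inequality produces the constant $2K^2(1+\ep^{-2})^p$, which is the origin of the $2^p$ in $A$. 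Collecting the weight factor $(1+\ep^{-2})^p$, the growth $\exp(2x\ep^{-\alpha/2}\cos\frac{\alpha\pi}{4})$ and the Gronwall factor $\exp(2K^2(1+\ep^{-2})^p x)$, and bounding each exponent by the corresponding multiple of $\ep^{-\nu}$, I obtain
\[
\|u_\ep^\delta(x,.)-u_\ep(x,.)\|_{H^p(\mathbb R)}^2\le 8\,\delta^2\exp\!\big(2A\,\ep^{-\nu}\big),\qquad A=\tfrac p2+1+K^22^p .
\]

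The regularization term splits once more as $\|u_\ep-u\|_{H^p}^2\le 2\|u_\ep-\mathcal P_\ep(u)\|_{H^p}^2+2\|\mathcal P_\ep(u)-u\|_{H^p}^2$, the nonlinear piece being handled by the same H\"older--Lipschitz--Gronwall computation. The heart of the proof is the truncation tail $\|\mathcal P_\ep(u)(x,.)-u(x,.)\|_{H^p(\mathbb R)}^2=\int_{|\omega|>1/\ep}(1+\omega^2)^p|\widehat u(x,\omega)|^2\,d\omega$. Here I insert the a~priori weight of \eqref{assumptionu2}, writing the integrand as the product of the factor $(1+\omega^2)^p\exp\!\big(2(x-1-\gamma)|\omega|^{\nu}\cos\frac{\alpha\pi}{4}\big)$ and $\exp\!\big(2(1+\gamma-x)|\omega|^{\nu}\cos\frac{\alpha\pi}{4}\big)|\widehat u(x,\omega)|^2$, and I apply Lemma~\ref{ineqlemma2} with $\xi=\nu$ to bound the first factor, for $|\omega|\ge1/\ep$, by its value at $\omega=1/\ep$. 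This is exactly the step that forces the smallness requirement $\ep<\big[\tfrac{\nu\gamma\cos\frac{\alpha\pi}{4}}{p}\big]^{1/\nu}$, i.e. the third entry of the minimum in \eqref{deltachoosing}. Using $M_2^2$ and running the Gronwall step of Sub--step~2c then gives, uniformly in $x\in[0,1)$ (bounding the decay by its worst value at $x\to1^-$),
\[
\|u_\ep(x,.)-u(x,.)\|_{H^p(\mathbb R)}^2\le 2M_2^2\,(1+\ep^{-2})^p\exp\!\big(-4B\,\ep^{-\nu}\big)\exp\!\big(2K^2(1+\ep^{-2})^p x\big),\qquad B=\tfrac12\gamma\cos\frac{\alpha\pi}{4}.
\]
In contrast with the data term, I cannot bound the extra (sub--$\nu$) factors by $\exp(2A\ep^{-\nu})$ without destroying the decay; instead their total exponent is $\le 2A\ep^{-q}$, and the second entry $\ep<(B/A)^{1/(\nu-q)}$ of \eqref{deltachoosing} yields $2A\ep^{-q}<2B\ep^{-\nu}$, so the tail is $\le 2M_2^2\exp(-2B\ep^{-\nu})$.

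Finally I balance the two contributions through the choice $\ep^{-\nu}=\frac{1}{A+B}\ln\frac1\delta$, which is precisely the value equating the data bound $\delta^2\exp(2A\ep^{-\nu})$ with the tail bound $\exp(-2B\ep^{-\nu})$: indeed $\delta^2\exp(2A\ep^{-\nu})=\delta^{\,2-2A/(A+B)}=\delta^{\,2B/(A+B)}$ and $\exp(-2B\ep^{-\nu})=\delta^{\,2B/(A+B)}$. Adding the two halves and taking square roots gives $\|u_\ep^\delta(x,.)-u(x,.)\|_{H^p(\mathbb R)}\le\sqrt{16+4M_2^2}\,\delta^{B/(A+B)}\le(4+2M_2)\,\delta^{B/(A+B)}=C_2\delta^{B/(A+B)}$, which is \eqref{result2b}. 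The main obstacle is the tail estimate: the polynomial weight $(1+\omega^2)^p$ has to be dominated by the super--exponentially decaying a~priori bound \eqref{assumptionu2}, something impossible by the naive estimate of the $L^2$ proof and made possible exactly by the monotonicity Lemma~\ref{ineqlemma2}, at the cost of the smallness conditions on $\ep$; a secondary difficulty, responsible for the $K^22^p$ summand in $A$, is that \eqref{f2} controls the nonlinearity only in $L^2$, forcing one to pull the weight out as $(1+\ep^{-2})^p$ before applying the Lipschitz bound.
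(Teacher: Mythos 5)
Your proposal is correct and takes essentially the same route as the paper's proof: the same splitting through the intermediate solution $u_\epsilon$ and the quantity $\mathcal{P}_\epsilon(u)$, the same H\"older--Lipschitz--Gronwall treatment of the nonlinearity after extracting the weight $(1+\epsilon^{-2})^p$, Lemma \ref{ineqlemma2} with $\xi=\frac{\alpha+\mu}{2}$ applied to the truncation tail, and the identical roles of the three entries of \eqref{deltachoosing} (smallness for $1+\epsilon^{-2}\le 2\epsilon^{-2}$, the absorption $A\epsilon^{-q}\le B\epsilon^{-\frac{\alpha+\mu}{2}}$, and the hypothesis of Lemma \ref{ineqlemma2}), culminating in the same balance $\delta e^{A\epsilon^{-\frac{\alpha+\mu}{2}}}$ versus $e^{-B\epsilon^{-\frac{\alpha+\mu}{2}}}$ under $\epsilon^{-\frac{\alpha+\mu}{2}}=\frac{1}{A+B}\ln\frac{1}{\delta}$. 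The only cosmetic deviations are that you carry squared norms to the very end, bound the $x$-dependent decay uniformly by its value as $x\to 1^-$, and conclude via $\sqrt{16+4M_2^2}\le 4+2M_2=C_2$, whereas the paper sums the two square-rooted terms directly.
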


\vspace*{0.3cm}   
   
\begin{proof} First, we have 
	\begin{align}
		\|u_{\epsilon}^\delta(x,.)-u(x,.)\|^2_{H^p(\mathbb{R})}\le 2\|u_{\epsilon}^\delta(x,.)-{u_\epsilon}(x,.)\|^2_{H^p(\mathbb{R})}+2\|{u_\epsilon}(x,.)-u(x,.)\|^2_{H^p(\mathbb{R})}. \label{etrinhp}
	\end{align}
{\color{black} We split the proof into} three steps as follows\\
	
	\noindent {\bf Step 1:} Estimate $\|u_{\epsilon}^\delta(x,.)-{u_\epsilon}(x,.)\|^2_{H^p(\mathbb{R})}$.  From  (\ref{pl2step1a}), we get 
	\begin{align}
 & \hspace*{5.3cm} \|u_{\epsilon}^\delta(x,.)-{u_\epsilon}(x,.)\|^2_{H^p(\mathbb{R})}\nn\\
& \le   2\underbrace{\int_{-\infty}^{+\infty}(1+\omega^2)^p \left| \cosh \Big(k(\omega) x\Big) \left[\hat{g}_{\epsilon}^\delta(\omega)-\hat{g}_{\epsilon}(\omega) \right] + \frac{\sinh \Big(k(\omega) x\Big)}{k(\omega)} \left[\hat{h}_{\epsilon}^\delta(\omega)-\hat{h}_{\epsilon}(\omega) \right]  \right|^2 d\omega}_{J_{1}}  \nn\\
& +2\underbrace{\int_{-\infty}^{+\infty} (1+\omega^2)^p\left|     \int_0^x  \frac{ \sinh {\Big (k(\omega)(x-z)\Big) }  }{k(\omega)} \left[ \widehat{f}_\epsilon(z,\omega,u_{\epsilon}^\delta(z,\omega)) - \widehat{f}_\epsilon(z,\omega,u_{\epsilon}(z,\omega)) \right] dz \right|^2 d\omega}_{J_{2}}. \nn
\end{align}
	
The quantities $J_1$ and $J_2$  are  estimated by the same way as  Theorem \ref{theorem1}. Firstly, we note that $\displaystyle (1+\omega^2)^p \le \left(1+ \epsilon^{-2} \right)^p$ and $\displaystyle \exp\Big(2x\Re(k(\omega)) \Big) \le  \exp\Big(2x\ep^{-{\frac{\alpha}{2}}} \cos{\frac{\alpha \pi}{4}} \Big)$ for all $\displaystyle |\omega| \le \frac{1}{\epsilon}$, so
	\begin{align}
		%%%%%%%%%%%%%%%%%  
		& J_{1} \le    2  \left(1+ \epsilon^{-2} \right)^p \exp\left(2x \epsilon^{-\frac{\alpha}{2}} \cos\frac{\alpha\pi}{4}\right)    \| \hat{g}^\delta -\hat{g} \|_{L^2(\mathbb{R})}^2   \nn\\
		& \hspace*{0.1cm} +  2 \left(1+ \epsilon^{-2} \right)^p  \exp\left(2x \epsilon^{-\frac{\alpha}{2}} \cos\frac{\alpha\pi}{4}\right)  \| \hat{h}^\delta -\hat{h}\|_{L^2(\mathbb{R})}^2 {\color{black}.} \nn
	\end{align} {\color{black}This} implies 
	\begin{align}
		%%%%%%%%%%%%%%%%%  
		\|u_{\epsilon}^\delta(x,.)-{u_\epsilon}(x,.)\|^2_{H^p(\mathbb{R})} 
		\le   4 \left(1+ \epsilon^{-2} \right)^p \exp\left( 2x \epsilon^{-\frac{\alpha}{2}} \cos\frac{\alpha\pi}{4}\right)  \delta  ^2,  \label{phpstep1a}
	\end{align} where the condition (\ref{noisyassumption}) {\color{black} has been} used.
	 Secondly, it follows from  $$\displaystyle \exp\Big(2(x-z)\Re(k(\omega)) \Big) \le  \exp\Big(2(x-z)\ep^{-{\frac{\alpha}{2}}} \cos{\frac{\alpha \pi}{4}} \Big),$$ for all $\displaystyle |\omega| \le \frac{1}{\epsilon}$ and $z \le x$ that 
	\begin{align}
		J_{2} \le \,\,&  \left(1+ \epsilon^{-2} \right)^p K^2  \exp\left(2x\ep^{-{\frac{\alpha}{2}}} \cos{\frac{\alpha \pi}{4}}\right)      \int_0^x \exp\left(-2z\ep^{-{\frac{\alpha}{2}}} \cos{\frac{\alpha \pi}{4}}\right) \norm{ u_{\epsilon}^\delta(z,.) -  u_{\epsilon}(z,.)}_{L^2(\mathbb{R})}^2    dz,\nn
	\end{align} where  the Lipschitz property (\ref{f2}) {\color{black} has been used}.
	 Moreover, because of the fact that   $$\norm{ u_{\epsilon}^\delta(z,.) -  u_{\epsilon}(z,.)}_{L^2(\mathbb{R})}^2 \le \norm{ u_{\epsilon}^\delta(z,.) -  u_{\epsilon}(z,.)}_{H^p(\mathbb{R})}^2,$$
 we obtain
	\begin{align}
	 J_2  	\le  & K^2 \left(1+ \epsilon^{-2} \right)^p    \exp\left(2x\ep^{-{\frac{\alpha}{2}}} \cos{\frac{\alpha \pi}{4}}\right)   \int_0^x \mathcal{Y}_2(z)   dz , \label{phpstep1b}
	\end{align}
	where $$\displaystyle \mathcal{Y}_2(z)=\exp\left(-2z\epsilon^{-\frac{\alpha}{2}} \cos{\frac{\alpha \pi}{4}}\right) \| u_{\epsilon}^\delta(z,.) - u_{\epsilon}(z,.) \|_{H^p(\mathbb{R})}^2 \ge 0,$$ for all $0\le z\le x$.
	Now, by associating   (\ref{phpstep1a}) with (\ref{phpstep1b}), we derive
	\begin{align}
		  & \hspace*{5cm} \|u_{\epsilon}^\delta(x,.)-{u_\epsilon}(x,.)\|^2_{H^p(\mathbb{R})}  \nn\\
	&	\le    8 \left(1+ \epsilon^{-2} \right)^p \exp\left( 2x \epsilon^{-\frac{\alpha}{2}} \cos\frac{\alpha\pi}{4}\right)  \delta  ^2   +2K^2 \left(1+ \epsilon^{-2} \right)^p    \exp\left(2x\ep^{-{\frac{\alpha}{2}}} \cos{\frac{\alpha \pi}{4}}\right)   \int_0^x \mathcal{Y}_2(z)   dz .\nn
	\end{align}
	This implies that
	\begin{align}
		\mathcal{Y}_2(x) \le     8 \left(1+ \epsilon^{-2} \right)^p   \delta  ^2   +2K^2 \left(1+ \epsilon^{-2} \right)^p \int_0^x \mathcal{Y}_2(z)   dz.\nn
	\end{align}
	Applying the Gronwall's inequality, we get
	\begin{align}
		\mathcal{Y}_2(x) \le 8 \left(1+ \epsilon^{-2} \right)^p   \delta  ^2    \exp\left(2K^2 \left(1+ \epsilon^{-2} \right)^p x\right) {\color{black}.}  \nn 
	\end{align} {\color{black} Therefore,}
	 \begin{align}
		 \| u_{\epsilon}^\delta(x,.) - u_{\epsilon}(x,.) \|_{H^p(\mathbb{R})}^2  
	 \le   8 \left(1+ \epsilon^{-2} \right)^p \exp\left(  2 x \epsilon^{-\frac{\alpha}{2}} \cos\frac{\alpha\pi}{4}\right)     \exp\left(2K^2 \left(1+ \epsilon^{-2} \right)^px \right)  \delta  ^2.\label{phpstep1c}
	\end{align}
	
\vspace*{0.3cm}

	\noindent {\bf Step 2:} Estimate $\|{u_\epsilon}(x,.)-u(x,.)\|^2_{H^p(\mathbb{R})}$. We divide this step into three sub-steps since 
\begin{align}
	\|u_\ep(x,.)-u(x,.)\|^2_{H^p(\mathbb{R})}\le &\,\,    2\|u_\ep(x,.)-\mathcal{P}_\ep(u)(x,.)\|^2_{H^p(\mathbb{R})}  + 2  \|\mathcal{P}_\ep(u)(x,.)-u(x,.)\|^2_{H^p(\mathbb{R})}. \label{phpetrin2}
\end{align}	
	 The first and the second {\color{black} steps} are estimating $\|u_\ep(x,.)-\mathcal{P}_\ep(u)(x,.)\|^2_{H^p(\mathbb{R})}$ and  $\|\mathcal{P}_\ep(u)(x,.)-u(x,.)\|^2_{H^p(\mathbb{R})}$ respectively. The last sub-step will combine two first sub-steps to derive an estimate for $\|u_\ep(x,.)-u(x,.)\|^2_{H^p(\mathbb{R})}$.\\ 
	  
\noindent {\bf Sub-step 2a:}  Estimating $\|u_\ep(x,.)-\mathcal{P}_\ep(u)(x,.)\|^2_{H^p(\mathbb{R})}$.  It follows from (\ref{ue}) and (\ref{peu}) that
\begin{align}
\,\,& u_\ep(x,t)-\mathcal{P}_\ep(u)(x,t)  =    \mathcal{F}^{-1}\left(-\int_0^x  \frac{ \sinh {\Big (k(\omega)(x-z)\Big) }  }{k(\omega)} \left[ \widehat{f}_\epsilon(z,\omega,u_{\epsilon} (z,\omega)) - \widehat{f}_\epsilon(z,\omega,u(z,\omega)) \right] dz\right)(t). \nn
\end{align}
Hence,
\begin{align}
 & \hspace*{4.9cm} \|u_\ep(x,.)-\mathcal{P}_\ep(u)(x,.)\|^2_{H^p(\mathbb{R})} \nn\\
	%%%%%%%%%%%%%%
&	=  \int_{-\infty}^{+\infty} (1+\omega^2)^p \left|     \int_0^x  \frac{ \sinh {\Big (k(\omega)(x-z)\Big) }  }{k(\omega)} \left[ \widehat{f}_\epsilon(z,\omega,u(z,\omega)) - \widehat{f}_\epsilon(z,\omega,u_{\epsilon}(z,\omega)) \right] dz \right|^2 d\omega . \nn
	\end{align}
{\color{black} We note that } 
	\begin{align*}
	\left| \frac{ \sinh {\Big (k(\omega)(x-z)\Big) }  }{k(\omega)} \right|^2 \le (x-z)^2 \exp\Big( 2(x-z)\Re(k(\omega)) \Big){\color{black},}
	\end{align*}
{\color{black} and}  
\begin{align*}
	  \exp\Big( 2(x-z)\Re(k(\omega)) \Big) \le  \exp\left(2(x-z)\epsilon^{-\frac{\alpha}{2}} \cos{\frac{\alpha \pi}{4}}\right) \le  \exp\left(2(x-z)\epsilon^{-\frac{\alpha+\mu}{2}} \cos{\frac{\alpha \pi}{4}}\right), \quad |\omega| \le \frac{1}{\epsilon},
	\end{align*}
since \,\, $\ep^{-\frac{\alpha}{2}} < \epsilon^{-\frac{\alpha+\mu}{2}}$ \,\, for  $\displaystyle \frac{1}{\ep} > 1$ . By  a similar argument as in (\ref{phpstep1b}), we obtain
\begin{align}
	 \|u_\ep(x,.)-\mathcal{P}_\ep(u)(x,.)\|^2_{H^p(\mathbb{R})}  \le   K^2 \left(1+ \epsilon^{-2} \right)^p    \exp\left(2x\ep^{-{\frac{\alpha+\mu}{2}}} \cos{\frac{\alpha \pi}{4}}\right)   \int_0^x \mathcal{Z}_2(z)   dz , \label{phpstep2aa}
	\end{align} where $$\displaystyle \mathcal{Z}_2(z):=\exp\left(-2z\epsilon^{-\frac{\alpha+\mu}{2}} \cos{\frac{\alpha \pi}{4}}\right) \| u  (z,.) - u_{\epsilon}(z,.) \|_{H^p(\mathbb{R})}^2 \ge 0,$$ for all $0\le z\le x$. \\

\noindent {\bf Sub-step 2b:} Estimate $\|\mathcal{P}_\ep(u)(x,.)-u(x,.)\|^2_{H^p (\mathbb{R})}$. The equations (\ref{peu}) and (\ref{u}) show that 
\begin{align}
  \mathcal{P}_\ep(u)(x,t)-u(x,t) =
   \mathcal{F}^{-1}\left( -   \hat{u}(x,\omega)   \chi_{\mathbb{R} \setminus \left[-\frac{1}{\epsilon};\frac{1}{\epsilon}\right]}(\omega) \right)
 \nn
\end{align}	where $\hat{u}(x,\omega)$ is given in (\ref{solution3}). So
	\begin{align}
		  & \hspace*{2.5cm} \norm{  \mathcal{P}_\ep(u)(x,.)-u(x,.) }^2_{H^p(\mathbb{R})}  \nn\\
	 &	=     \int_{\mathbb{R} \setminus \left[-\frac{1}{\epsilon};\frac{1}{\epsilon}\right] }   \exp\left(2(1+\gamma-x)|\omega|^{ {\frac{\alpha+\mu}{2}}} \cos{\frac{\alpha \pi}{4}}\right) \left|\hat{u}(x,\omega)  \right|^2  	(1+\omega^2)^p \exp\left(2(x- \gamma-1)|\omega|^{{\frac{\alpha+\mu}{2}}} \cos{\frac{\alpha \pi}{4}}\right)  d\omega. \nn
		\end{align}
Since $\delta$ {\color{black} satisfies} (\ref{deltachoosing}), we {\color{black} imply} that  $\ep$ satisfies the condition (\ref{lemmamonoas2}). Applying  Lemma \ref{ineqlemma2}  with  $\displaystyle \xi=\frac{\alpha+\mu }{2}>0$, we get 
 $$(1+\omega^2)^p \exp\left(2(x- \gamma-1)|\omega|^{{\frac{\alpha+\mu}{2}}} \cos{\frac{\alpha \pi}{4}}\right) \le \left(1+\epsilon^{-2} \right)^p  \exp\left(2(x- \gamma-1)\ep^{-{\frac{\alpha+\mu }{2}}} \cos{\frac{\alpha \pi}{4}}\right),$$ for all $\displaystyle |\omega| \le \frac{1}{\epsilon}$.  Using the assumption (\ref{assumptionu2}) we have 
$$\int_{\mathbb{R} \setminus \left[-\frac{1}{\epsilon};\frac{1}{\epsilon}\right] }   \exp\left(2(1+\gamma-x)|\omega|^{ {\frac{\alpha+\mu}{2}}} \cos{\frac{\alpha \pi}{4}}\right) \left|\hat{u}(x,\omega)  \right|^2d\omega \le M_2^2.$$ Therefore, 
\begin{align}
		\norm{  \mathcal{P}_\ep(u)(x,.)-u(x,.) }^2_{H^p(\mathbb{R})}  \le   M_2^2 \left(1+\epsilon^{-2} \right)^p  \exp\left(2(x- \gamma-1)\ep^{-{\frac{\alpha+\mu }{2}}} \cos{\frac{\alpha \pi}{4}}\right)          . \label{phpstep2ba}
	\end{align}

\noindent {\bf Sub-step 2c:} Estimate for  the term  $\|u_\ep(x,.)-u(x,.)\|^2_{H^p(\mathbb{R})}$. Combining (\ref{phpetrin2}) and  (\ref{phpstep2aa}), (\ref{phpstep2ba}), we get
	\begin{align}
		 & \|u_\ep(x,.)-u(x,.)\|^2_{H^p(\mathbb{R})} 
		\le  2M_2^2 \left(1+\epsilon^{-2} \right)^p  \exp\left(2(x- \gamma-1)\ep^{-{\frac{\alpha+\mu }{2}}} \cos{\frac{\alpha \pi}{4}}\right) \nn\\
	&  \hspace*{1.7cm}	 + 2K^2 \left(1+ \epsilon^{-2} \right)^p    \exp\left(2x\ep^{-{\frac{\alpha+\mu}{2}}} \cos{\frac{\alpha \pi}{4}}\right)   \int_0^x \mathcal{Z}_2(z)   dz,   \nn
	\end{align} which implies that
	\begin{align}
		\mathcal{Z}_2(x)
		\le &\,\,  2M_2^2 \left(1+\epsilon^{-2} \right)^p \exp\left(-2(\gamma+1)  \ep^{-{\frac{\alpha+\mu }{2}}}  \cos{\frac{\alpha \pi}{4}}\right)    + 2K^2 \left(1+ \epsilon^{-2} \right)^p      \int_0^x \mathcal{Z}_2(z)   dz . \nn
	\end{align}	
Applying the Gronwall's inequality, we obtain 
\begin{align}
\mathcal{Z}_2(x) \le 2M_2^2\left(1+\epsilon^{-2} \right)^p \exp\left(-2 (\gamma+1) \ep^{-{\frac{\alpha+\mu }{2}}}  \cos{\frac{\alpha \pi}{4}}\right) \exp\left( 2K^2 \left(1+ \epsilon^{-2} \right)^p x \right). \label{phpgwb} 
\end{align}	{\color{black} This leads to}
\begin{align}
 & \hspace*{1cm} \|{u}(x,.)-u_\epsilon(x,.)\|^2_{H^p(\mathbb{R})} \le 2M_2^2\left(1+\epsilon^{-2} \right)^p \times \nn\\
 & \times \exp\left(2(x-\gamma-1)  \ep^{-{\frac{\alpha+\mu }{2}}}  \cos{\frac{\alpha \pi}{4}}\right)  \exp\left( 2K^2 \left(1+ \epsilon^{-2} \right)^p   \right). \label{hpstep2b}
\end{align}

\vspace*{0.3cm}
	 
	\noindent {\bf Step 3:} Estimate for  $\|u_{\epsilon}^\delta(x,.)-u(x,.)\|_{H^p(\mathbb{R})}$. It follows from (\ref{etrinhp}), (\ref{phpstep1c}), and (\ref{hpstep2b}) that 
\begin{align}
&   \|u_{\epsilon}^\delta(x,.)-u(x,.)\|_{H^p(\mathbb{R})}^2 \le   16 \left(1+ \epsilon^{-2} \right)^p \exp\left(  2 x \epsilon^{-\frac{\alpha}{2}} \cos\frac{\alpha\pi}{4}\right)     \exp\left(2K^2 \left(1+ \epsilon^{-2} \right)^px \right)  \delta  ^2 \nn\\
 & \hspace*{1cm} + 4M_2^2\left(1+\epsilon^{-2} \right)^p   \exp\left(2(x-\gamma-1)  \ep^{-{\frac{\alpha+\mu }{2}}}  \cos{\frac{\alpha \pi}{4}}\right)  \exp\left( 2K^2 \left(1+ \epsilon^{-2} \right)^p   \right). \nn
\end{align}	
{\color{black}S}o 
\begin{align}
& \|u_{\epsilon}^\delta(x,.)-u(x,.)\|_{H^p(\mathbb{R})}  \le   4 \left(1+ \epsilon^{-2} \right)^{p/2} \exp\left(    x \epsilon^{-\frac{\alpha}{2}} \cos\frac{\alpha\pi}{4}\right)     \exp\left( K^2 \left(1+ \epsilon^{-2} \right)^p  \right)  \delta    \nn\\
 & \hspace*{0.7cm} + 2M_2 \left(1+\epsilon^{-2} \right)^{p/2}   \exp\left( (x-\gamma-1)  \ep^{-{\frac{\alpha+\mu }{2}}}  \cos{\frac{\alpha \pi}{4}}\right)  \exp\left(  K^2 \left(1+ \epsilon^{-2} \right)^p   \right) .\nn
\end{align}\\ Now we consider the estimate (\ref{result2b}).
	 Since  $\delta$ satisfies the condition (\ref{deltachoosing}), we know that  $\displaystyle \frac{1}{\epsilon} >1$. This follows from $\displaystyle 0\le x\cos\frac{\alpha\pi}{4}\le 1$  that
\begin{align}
\left(1+ \epsilon^{-2} \right)^{p/2} & \le \left(e^{\ep^{-2}}\right)^{p/2}, \nn\\
\exp\left(    x \epsilon^{-\frac{\alpha}{2}} \cos\frac{\alpha\pi}{4}\right)     & \le e^{ \epsilon^{-\frac{\alpha}{2}}}, \nn\\
\exp\left( K^2 \left(1+ \epsilon^{-2} \right)^p  \right) &\le e^{ K^2.(2\ep^{-2})^p}, \nn
\end{align}
and  
\begin{align} 
\left(e^{\ep^{-2}}\right)^{p/2}   e^{ \epsilon^{-\frac{\alpha}{2}}}   e^{ K^2 (2\ep^{-2})^p} = e^{\frac{p}{2}\ep^{-2}+  \ep^{-\frac{\alpha}{2}} +  K^22^p\ep^{-2p} } \le e^{ A\ep^{-q}}    \nn
\end{align}	where $\displaystyle q = \max \left\{2;\,\frac{\alpha}{2};\,2p\right\}  $,\,\,  $ \displaystyle A  = \frac{p}{2}  +1+ K^22^p $.  It follows from $\displaystyle  \mu>\max\{4-\alpha;\,4p-\alpha\} $ that $\displaystyle \frac{\alpha+\mu}{2} \ge   \max \left\{2;\,\frac{\alpha}{2};\,2p\right\} = q$. Hence,  $\displaystyle \ep^{-q} \le \ep^{-\frac{\alpha+\mu }{2}}$   and 
\begin{align} 
\left(e^{\ep^{-2}}\right)^{p/2}   e^{ \epsilon^{-\frac{\alpha}{2}}}   e^{ K^2 (2\ep^{-2})^p} \le e^{ A\ep^{-\frac{\alpha+\mu }{2}}} . \nn
\end{align}
Combining {\color{black} the above arguments}, we derive
	\begin{align}
		4 \left(1+ \epsilon^{-2} \right)^{p/2} \exp\left(    x \epsilon^{-\frac{\alpha}{2}} \cos\frac{\alpha\pi}{4}\right)     \exp\left( K^2 \left(1+ \epsilon^{-2} \right)^p  \right)  \delta  
	 \le   4 e^{ A\ep^{-\frac{\alpha+\mu }{2}}} \delta   .\label{theorem2prove24} 
	\end{align}	
	On the other hand,   we also have 
\begin{align}
\left(1+ \epsilon^{-2} \right)^{p/2} &\,\,\le \left(e^{\ep^{-2}}\right)^{p/2}, \nn\\
\exp\left( (x-\gamma-1)  \ep^{-{\frac{\alpha+\mu }{2}}}  \cos{\frac{\alpha \pi}{4}}\right)     &\,\,\le \exp\left(- \gamma \cos{\frac{\alpha \pi}{4}}. \ep^{-{\frac{\alpha+\mu }{2}}}  \right), \nn\\
\exp\left( K^2 \left(1+ \epsilon^{-2} \right)^p  \right) &\,\,\le e^{ K^2(2\ep^{-2})^p}, \nn
\end{align}
and
\begin{align} 
  \left(e^{\ep^{-2}}\right)^{p/2}   \exp\left(- \gamma \cos{\frac{\alpha \pi}{4}}\ep^{-{\frac{\alpha+\mu }{2}}}  \right) e^{ K^2 (2\ep^{-2})^p}  &= e^{ \frac{p}{2}\ep^{-2}+ K^22^p\ep^{-2p} } \exp\left(- \gamma \cos{\frac{\alpha \pi}{4}} \ep^{-{\frac{\alpha+\mu }{2}}}  \right)  \nn\\
& \le e^{ A\ep^{-q} - 2B\ep^{-\frac{\alpha+\mu }{2}}},    \nn
\end{align} where $\displaystyle B=\frac{1}{2}\gamma \cos\frac{\alpha \pi}{4}.$	
From  (\ref{deltachoosing}), we have $\displaystyle \epsilon  < \left(\frac{B}{A}\right)^{\frac{1}{\frac{\alpha+\mu}{2}-q}} $. So 
$
A\ep^{-q} \le  B\ep^{-\frac{\alpha+\mu }{2}}. $
We conclude that the following inequality holds  
\begin{align} 
\left(e^{\ep^{-2}}\right)^{p/2}    \exp\left(- \gamma \cos{\frac{\alpha \pi}{4}} \ep^{-{\frac{\alpha+\mu }{2}}}  \right) e^{ K^2 (2\ep^{-2})^p} &\,\,\le  e^{ -  B\ep^{-\frac{\alpha+\mu }{2}}}. \nn
\end{align}  \\
{\color{black} The above arguments imply that}
	\begin{align}
	 2M_2 \left(1+\epsilon^{-2} \right)^{p/2}   \exp\left( (x-\gamma-1)  \ep^{-{\frac{\alpha+\mu }{2}}}  \cos{\frac{\alpha \pi}{4}}\right)  \exp\left(  K^2 \left(1+ \epsilon^{-2} \right)^p   \right) \le   2M_2   e^{ -  B\ep^{-\frac{\alpha+\mu }{2}}}. \label{theorem2prove5}
 	\end{align}	
	 From (\ref{hpstep2b}), (\ref{theorem2prove24}), and (\ref{theorem2prove5}), we derive
	\begin{align}
		 \|u_{\epsilon}^\delta(x,.)-u(x,.)\| _{H^p(\mathbb{R})}  \le   4 e^{ A\ep^{-\frac{\alpha+\mu }{2}}} \delta    + 2M_2   e^{ -  B\ep^{-\frac{\alpha+\mu }{2}}} . \label{mainresult32}
	\end{align}  
{\color{black} By}  some simple computations, we  get
\begin{align}
	  \|u_{\epsilon}^\delta(x,.)-u(x,.)\|_{H^p(\mathbb{R})}  
		  \le    C_2\delta^{\frac{B}{A+B}}.   \nn
	\end{align} where $\displaystyle C_2= 4+2M_2 $, i.e., the inequality (\ref{result2b}) is proved.	
\end{proof}

\vspace*{0.3cm}

\begin{remark}

To obtain the convergence, we need {\color{black} the strong  assumption} (\ref{assumptionu2}) on the exact solution $u(x,t)$. The techniques are not new and come  from applying    Gronwall's inequality. 
In the next theorem, we will present  a new  way to deal with a weaker assumption (\ref{assumptionu3}) on the exact solution $u(x,t)$.  Indeed, the assumption (\ref{assumptionu3}) is much better than the assumption (\ref{assumptionu2}) of the previous Theorem. 
\end{remark}

%%%%%%%%%%%%%%%%%%%%%%%%%%%%%%%%%
%%%%%%%%%%%%%%%%%%%%%%%%%%%%%%%%%
%%%%%%%%%%%%%%%%%%%%%%%%%%%%%%%%%
\begin{theorem}Assume  that the problem (\ref{exsolution}) has a  unique (exact) solution $u$ such that
	\begin{align}
	\int_{-\infty}^{+\infty}  \exp\Big(2(1+\gamma-x)\Re(k(\omega)) \Big) \left|\hat{u}(x,\omega)  \right|^2 d\omega < M_3^2, \quad 0\le x <1, \label{assumptionu3}
	\end{align} 
	for $M_3>0$. 
	 Let us choose  the regularization parameter  $\displaystyle \ep $  such that
\begin{align}
\displaystyle  \ep < \left[ \frac{\alpha\gamma \cos\frac{\alpha\pi}{4}}{2p} \right]^\frac{2}{\alpha} \label{deltachoosing4}
\end{align} then
	\begin{align}
		&   \|u_{\epsilon}^\delta(x,.)-u(x,.)\| _{H^p(\mathbb{R})} 	\le  C_3 \left(1+\epsilon^{-2} \right)^{p}   \exp\left( x \epsilon^{-\frac{\alpha}{2}} \cos\frac{\alpha\pi}{4}\right) \delta \nn\\
	&\hspace*{1.2cm}	+  C_3 \left(1+\epsilon^{-2} \right)^{p}      \exp\Big(  (x-\gamma-1)\ep^{-{\frac{\alpha}{2}}} \cos{\frac{\alpha \pi}{4}} \Big)   , \label{theorem3ra}
	\end{align}
{\color{black} for all $0\le x <1$} where $$C_3= \max\Big\{ 5e^{K^2};\, 3M_3\left(e^{K^2}+1\right) \Big\} .$$   	 
\end{theorem}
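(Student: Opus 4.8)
The plan is to follow the three-step architecture of Theorem \ref{theorem2} (triangle inequality (\ref{etrinhp}), then separate estimates for $u_{\epsilon}^\delta-u_\epsilon$ and $u_\epsilon-u$), but to replace the direct use of Gronwall's inequality \emph{in the $H^p$-norm}. In Theorem \ref{theorem2} that direct use forced the super-exponential factor $\exp\!\big(K^2(1+\epsilon^{-2})^p\big)$, which is why the strong Gevrey-type assumption (\ref{assumptionu2}) (with the large $\mu$) was needed to beat it down through the choice of $\epsilon$. The key new idea here is that the Lipschitz bound (\ref{f2}) is an $L^2$-bound on $f$, so if every nonlinear contribution is reduced to the $L^2$-norm of a solution difference, Gronwall is invoked only in $L^2$, where (exactly as in Theorem \ref{theorem1}) it produces the harmless constant $e^{2K^2}$ instead of a quantity blowing up in $\epsilon^{-1}$. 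The polynomial weight $(1+\omega^2)^p$ is then handled separately and only where it is genuinely unavoidable, namely on the high-frequency tail.

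Concretely, I would first record two $L^2$-estimates (using the $u_\epsilon$ and $\mathcal{P}_\epsilon(u)$ already constructed via Lemma \ref{uniqlemma} in the proof of Theorem \ref{theorem1}). For the data part, $\|u_{\epsilon}^\delta(x,.)-u_\epsilon(x,.)\|_{L^2(\mathbb{R})}^2 \le 8e^{2K^2}\exp\!\big(2x\epsilon^{-\alpha/2}\cos\tfrac{\alpha\pi}{4}\big)\delta^2$ is precisely (\ref{pl2step1e}), whose derivation does not use any a priori assumption. For the truncation part I would re-run Step 2 of Theorem \ref{theorem1} verbatim, with the weight $(1-x)$ replaced by the $(1+\gamma-x)$ coming from (\ref{assumptionu3}); this gives $\|\mathcal{P}_\epsilon(u)(x,.)-u(x,.)\|_{L^2(\mathbb{R})}^2 \le M_3^2\exp\!\big(2(x-\gamma-1)\epsilon^{-\alpha/2}\cos\tfrac{\alpha\pi}{4}\big)$ and, after the $L^2$-Gronwall step, $\|u_\epsilon(x,.)-u(x,.)\|_{L^2(\mathbb{R})}^2 \le 2M_3^2e^{2K^2}\exp\!\big(2(x-\gamma-1)\epsilon^{-\alpha/2}\cos\tfrac{\alpha\pi}{4}\big)$.

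With these in hand I would estimate the two $H^p$-pieces. In each nonlinear integral the integrand in $\omega$ is supported in $[-\tfrac1\epsilon,\tfrac1\epsilon]$, so $(1+\omega^2)^p\le(1+\epsilon^{-2})^p$ may be pulled out as a single factor; applying Hölder's inequality and Lemma \ref{ineqlemma}(b) exactly as for $J_2$ in Theorem \ref{theorem2}, and then \emph{substituting the two $L^2$-bounds above} (so that the $z$-integral becomes elementary and no further Gronwall is required), yields $\|u_{\epsilon}^\delta(x,.)-u_\epsilon(x,.)\|_{H^p(\mathbb{R})}^2 \lesssim (1+\epsilon^{-2})^p\exp\!\big(2x\epsilon^{-\alpha/2}\cos\tfrac{\alpha\pi}{4}\big)\delta^2$ and $\|u_\epsilon(x,.)-\mathcal{P}_\epsilon(u)(x,.)\|_{H^p(\mathbb{R})}^2 \lesssim (1+\epsilon^{-2})^p\exp\!\big(2(x-\gamma-1)\epsilon^{-\alpha/2}\cos\tfrac{\alpha\pi}{4}\big)$.

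The one place where the weight cannot be pulled out is the truncation tail $\|\mathcal{P}_\epsilon(u)(x,.)-u(x,.)\|_{H^p(\mathbb{R})}^2=\int_{|\omega|>1/\epsilon}(1+\omega^2)^p|\hat u(x,\omega)|^2\,d\omega$, where $(1+\omega^2)^p$ is unbounded; this is \textbf{the main obstacle}. It is resolved exactly by Lemma \ref{ineqlemma2} with $\xi=\alpha/2$: writing $|\hat u(x,\omega)|^2 = \exp(-2(1+\gamma-x)\Re(k(\omega)))\,\exp(2(1+\gamma-x)\Re(k(\omega)))\,|\hat u(x,\omega)|^2$ and using $\Re(k(\omega))=|\omega|^{\alpha/2}\cos\tfrac{\alpha\pi}{4}$, the hypothesis (\ref{deltachoosing4}) is precisely condition (\ref{lemmamonoas2}) for $\xi=\alpha/2$, so $(1+\omega^2)^p\exp(-2(1+\gamma-x)\Re(k(\omega)))$ is maximized at $|\omega|=1/\epsilon$; together with (\ref{assumptionu3}) this gives $\|\mathcal{P}_\epsilon(u)(x,.)-u(x,.)\|_{H^p(\mathbb{R})}^2\le M_3^2(1+\epsilon^{-2})^p\exp\!\big(2(x-\gamma-1)\epsilon^{-\alpha/2}\cos\tfrac{\alpha\pi}{4}\big)$. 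Finally I would feed all pieces into (\ref{etrinhp}), take square roots using $\sqrt{a+b}\le\sqrt a+\sqrt b$ together with the crude bound $(1+\epsilon^{-2})^{p/2}\le(1+\epsilon^{-2})^p$, and collect the numerical constants into $C_3=\max\{5e^{K^2};\,3M_3(e^{K^2}+1)\}$, obtaining (\ref{theorem3ra}).
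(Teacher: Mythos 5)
Your proposal is correct, and it is essentially the paper's own argument: the paper likewise performs Gronwall only in $L^2$ (reusing (\ref{pl2step1e}) and rerunning the Theorem \ref{theorem1} tail estimate with the weight $(1+\gamma-x)$ of (\ref{assumptionu3}), which is (\ref{pihpg})), and it resolves what you correctly single out as the main obstacle --- the unbounded weight $(1+\omega^2)^p$ on the high-frequency tail --- by exactly your device, Lemma \ref{ineqlemma2} with $\xi=\frac{\alpha}{2}$, the hypothesis (\ref{deltachoosing4}) being condition (\ref{lemmamonoas2}) verbatim. The only divergence is bookkeeping for the band-limited pieces: the paper uses the three-term splitting (\ref{pihpa}) together with the one-line remark that a function whose Fourier transform is supported in $\left[-\frac{1}{\ep};\frac{1}{\ep}\right]$ has $H^p$-norm at most $\left(1+\ep^{-2}\right)^{p/2}$ times its $L^2$-norm, and it controls $\|u_\ep(x,.)-\mathcal{P}_\ep(u)(x,.)\|_{L^2(\mathbb{R})}$ by a further triangle inequality through $u$ (step (\ref{pihpb})), whereas you re-expand the Duhamel-type integrals, pull out $(1+\omega^2)^p\le\left(1+\ep^{-2}\right)^p$, and substitute the two $L^2$-bounds under the $z$-integral so that no second Gronwall is needed. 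Both executions are sound and yield the same convergence rate, but note one cosmetic discrepancy: your substitution keeps the explicit Lipschitz prefactor $K^2$ in front of the $z$-integral, so your final coefficients come out of order $Ke^{K^2}$ (roughly $2\sqrt{2}\,KM_3e^{K^2}$ on the tail term and $4\sqrt{2}\,Ke^{K^2}$ on the data term) rather than the literal $C_3=\max\Big\{5e^{K^2};\,3M_3\left(e^{K^2}+1\right)\Big\}$ of the statement; to obtain that exact constant you should replace the substitution step by the Fourier-support remark combined with (\ref{pihpb}), which keeps $K$ confined to the exponential factor produced by the $L^2$ Gronwall estimates.
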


%If  we let $\displaystyle \delta $ be enough small such that
%\begin{align}
%\displaystyle  \displaystyle \left[ \frac{2\cos\frac{\alpha\pi}{4}}{\ln  \frac{1}{\delta}  } \right]^{\frac{2}{\alpha}} < 1 \label{deltachoosing4}
%\end{align}
%and choose $\displaystyle \epsilon=    \left[ \frac{ 2\cos\frac{\alpha\pi}{4}}{\ln  \frac{1}{\delta}  } \right]^{\frac{2}{\alpha}}$ then 
%	\begin{align}
%	 \|u_{\epsilon}^\delta(x,.)-u(x,.)\|_{L^2(\mathbb{R})}
%	\le   C_4 \sqrt{\delta^{1-x}} ,\quad 0\le x <1. \label{mainresult2}
%	\end{align}

\begin{proof}
Using  the triangle inequality, we have 
\begin{align}
	  \|u_{\epsilon}^\delta(x,.)-u(x,.)\|^2_{H^p(\mathbb{R})} &\le  3\|u_{\epsilon}^\delta(x,.)-{u_\epsilon}(x,.)\|^2_{H^p(\mathbb{R})} +  3\|u_\ep(x,.)-\mathcal{P}_\ep(u)(x,.)\|^2_{H^p(\mathbb{R})}  \nn\\
	&  + 3  \|\mathcal{P}_\ep(u)(x,.)-u(x,.)\|^2_{H^p(\mathbb{R})}   \le   3\left(1+\epsilon^{-2} \right)^p\|u_{\epsilon}^\delta(x,.)-{u_\epsilon}(x,.)\|^2_{L^2(\mathbb{R})} \nn\\
	&  +  3\left(1+\epsilon^{-2} \right)^p \|u_\ep(x,.)-\mathcal{P}_\ep(u)(x,.)\|^2_{L^2(\mathbb{R})}       + 3  \|\mathcal{P}_\ep(u)(x,.)-u(x,.)\|^2_{H^p(\mathbb{R})} . \label{pihpa}
\end{align}	
Moreover, we have 
\begin{align}
  \|u_\ep(x,.)-\mathcal{P}_\ep(u)(x,.)\|^2_{L^2(\mathbb{R})}   &\le  \|u_\ep(x,.)-u(x,.)\|^2_{L^2(\mathbb{R})} + \| \mathcal{P}_\ep(u)(x,.) - u(x,.)\|^2_{L^2(\mathbb{R})} \nn\\
  &  \le \|u_\ep(x,.)-u(x,.)\|^2_{L^2(\mathbb{R})} + \| \mathcal{P}_\ep(u)(x,.) - u(x,.)\|^2_{H^p(\mathbb{R})}. \label{pihpb}
\end{align} Therefore, by combining (\ref{pihpa}) and (\ref{pihpb}), we  obtain the following inequality 
\begin{align}
		 \|u_{\epsilon}^\delta(x,.)-u(x,.)\|^2_{H^p(\mathbb{R})} &\le 3\left(1+\epsilon^{-2} \right)^p\|u_{\epsilon}^\delta(x,.)-{u_\epsilon}(x,.)\|^2_{L^2(\mathbb{R})} \nn\\
		&  +  3\left(1+\epsilon^{-2} \right)^p \|u_\ep(x,.)-u(x,.)\|^2_{L^2(\mathbb{R})}   \nn\\
		& + 3\left[\left(1+\epsilon^{-2} \right)^p+1\right]  \|\mathcal{P}_\ep(u)(x,.)-u(x,.)\|^2_{H^p(\mathbb{R})} . \label{pihpc}
\end{align}	
The  term  $\displaystyle  \|u_{\epsilon}^\delta(x,.)-{u_\epsilon}(x,.)\|^2_{L^2(\mathbb{R})}$  can be similarly estimated as  (\ref{pl2step1e}) 
\begin{align}
	\|u_{\epsilon}^\delta(x,.)-{u_\epsilon}(x,.)\|^2_{L^2(\mathbb{R})} &\,\, \le 8  e^{2K^2 }  \exp\left(2x \epsilon^{-\frac{\alpha}{2}} \cos\frac{\alpha\pi}{4}\right) \delta^2.\label{pihpd}
	\end{align}
Next, we  divide this proof into three steps. The first step is  estimating $\displaystyle \|u_\ep(x,.)-u(x,.)\|^2_{L^2(\mathbb{R})}$, the second step is estimating $\displaystyle \|\mathcal{P}_\ep(u)(x,.)-u(x,.)\|^2_{H^p(\mathbb{R})}$, and the last step is obtaining an estimate for $\displaystyle \|u_{\epsilon}^\delta(x,.)-u(x,.)\|^2_{H^p(\mathbb{R})}$. \\

\noindent {\bf Step 1: }Estimate  $\displaystyle \|u_\ep(x,.)-u(x,.)\|^2_{L^2(\mathbb{R})}$. We have   
\begin{align}
	\|u_\ep(x,.)-u(x,.)\|^2_{L^2(\mathbb{R})}\le &\,\,    2\|u_\ep(x,.)-\mathcal{P}_\ep(u)(x,.)\|^2_{L^2(\mathbb{R})}  + 2  \|\mathcal{P}_\ep(u)(x,.)-u(x,.)\|^2_{L^2(\mathbb{R})}. \label{pihptri2}
\end{align}	In addition, the inequality (\ref{pl2step2aa})  also holds under the assumption \eqref{assumptionu3}
\begin{align}
\,\,& \|u_\ep(x,.)-\mathcal{P}_\ep(u)(x,.)\|^2_{L^2(\mathbb{R})} \le   K^2 \exp\left( 2x\ep^{-{\frac{\alpha}{2}}} \cos{\frac{\alpha \pi}{4}} \right) \int_0^x \mathcal{Z}_1(z)dz, \label{pihpe}
\end{align}
where $\displaystyle \mathcal{Z}_1(z):=\exp\left(-2z\epsilon^{-\frac{\alpha}{2}} \cos{\frac{\alpha \pi}{4}}\right) \| u_{\epsilon} (z,.) - u (z,.) \|_{L^2(\mathbb{R})}^2 \ge 0$ for all $0\le z\le x$.  It follows from (\ref{pl2nopeumu}) that  
\begin{align}
   \mathcal{P}_\ep(u)(x,.)-u(x,.)    =  \mathcal{F}^{-1}\left( -   \hat{u}(x,\omega)   \chi_{\mathbb{R} \setminus \left[-\frac{1}{\epsilon};\frac{1}{\epsilon}\right]}(\omega) \right). \label{pihppeumu}
\end{align} 
Using the assumption (\ref{assumptionu3}), we get
\begin{align}
 & \hspace*{4 cm} \norm{\mathcal{P}_\ep(u)(x,.)-u(x,.)}^2_{L^2(\mathbb{R})} \nn\\
&=    \int_{\mathbb{R} \setminus \left[-\frac{1}{\epsilon};\frac{1}{\epsilon}\right] }  \exp\Big(2(1+\gamma-x)\Re(k(\omega)) \Big) \left|\hat{u}(x,\omega)  \right|^2\exp\Big(2(x-\gamma-1)\Re(k(\omega)) \Big) d\omega \nn\\
&\le   \exp\Big(2(x-\gamma-1)\ep^{-{\frac{\alpha}{2}}} \cos{\frac{\alpha \pi}{4}} \Big) \int_{\mathbb{R} \setminus \left[-\frac{1}{\epsilon};\frac{1}{\epsilon}\right] }  \exp\Big(2(1+\gamma-x)\Re(k(\omega)) \Big) \left|\hat{u}(x,\omega)  \right|^2  d\omega \nn \\
& \le   M_3^2  \exp\Big(2(x-\gamma-1)\ep^{-{\frac{\alpha}{2}}} \cos{\frac{\alpha \pi}{4}} \Big).    \label{pihpf}
\end{align} It follows from (\ref{pihptri2}), (\ref{pihpe}), and (\ref{pihpf}) that
\begin{align}
	\|u_\ep(x,.)-u(x,.)\|^2_{L^2(\mathbb{R})}&\le 2K^2 \exp\left( 2x\ep^{-{\frac{\alpha}{2}}} \cos{\frac{\alpha \pi}{4}} \right) \int_0^x \mathcal{Z}_1(z)dz  \nn\\
	&+ 2  M_3^2  \exp\Big(2(x-\gamma-1)\ep^{-{\frac{\alpha}{2}}} \cos{\frac{\alpha \pi}{4}} \Big). \nn
\end{align}
Multiplying two sides of the above inequality by $\displaystyle \exp\left( 2x\ep^{-{\frac{\alpha}{2}}} \cos{\frac{\alpha \pi}{4}} \right)$, we obtain  
\begin{align}
	\mathcal{Z}_1(x) \le 2  M_3^2  \exp\Big(-2(\gamma+1)\ep^{-{\frac{\alpha}{2}}} \cos{\frac{\alpha \pi}{4}} \Big) +   2K^2   \int_0^x \mathcal{Z}_1(z)dz .\nn
\end{align}
By  applying the Gronwall inequality, we derive
\begin{align}
	\mathcal{Z}_1(x) \le 2  M_3^2  \exp\Big(-2(\gamma+1)\ep^{-{\frac{\alpha}{2}}} \cos{\frac{\alpha \pi}{4}} \Big)e^{2K^2x}, \nn
\end{align}
which leads to  
\begin{align}
	\|u_\ep(x,.)-u(x,.)\|^2_{L^2(\mathbb{R})} \le 2  M_3^2  \exp\Big( 2(x-\gamma-1)\ep^{-{\frac{\alpha}{2}}} \cos{\frac{\alpha \pi}{4}} \Big)e^{2K^2}. \label{pihpg}
\end{align}
\noindent {\bf Step 2: }Estimate $\displaystyle \|\mathcal{P}_\ep(u)(x,.)-u(x,.)\|^2_{H^p(\mathbb{R})}$. It follows from (\ref{pihppeumu}) that 
\begin{align}
		 & \hspace*{4.9cm} \norm{  \mathcal{P}_\ep(u)(x,.)-u(x,.) }^2_{H^p(\mathbb{R})} \nn\\
		  &   =\int_{\mathbb{R} \setminus \left[-\frac{1}{\epsilon};\frac{1}{\epsilon}\right] }   \exp\Big(2(1+\gamma-x)\Re(k(\omega)) \Big) \left|\hat{u}(x,\omega)  \right|^2  	(1+\omega^2)^p \exp\Big(2(x- \gamma-1) \Re(k(\omega)) \Big)  d\omega. \nn
		\end{align}	
Since  $\ep$ satisfies (\ref{deltachoosing4}) and (\ref{lemmamonoas2}) , we   apply  the Lemma \ref{ineqlemma2} for   $\displaystyle \xi=\frac{\alpha }{2}>0$ in order  to obtain
\begin{align}
  \hspace*{0.6cm} (1+\omega^2)^p \exp\Big(2(x- \gamma-1) \Re(k(\omega)) \Big) 
&  =   (1+\omega^2)^p \exp\left(2(x- \gamma-1)|\omega|^{{\frac{\alpha }{2}}} \cos{\frac{\alpha \pi}{4}}\right) \nn\\
& \le  \left(1+\epsilon^{-2} \right)^p  \exp\left(2(x- \gamma-1)\ep^{-{\frac{\alpha  }{2}}} \cos{\frac{\alpha \pi}{4}}\right), \nn
\end{align}
for all $\displaystyle |\omega| \le \frac{1}{\epsilon}$. On the other hand, since  (\ref{assumptionu3}),  we have 
$$\int_{\mathbb{R} \setminus \left[-\frac{1}{\epsilon};\frac{1}{\epsilon}\right] }   \exp\Big(2(1+\gamma-x)\Re(k(\omega)) \Big) \left|\hat{u}(x,\omega)  \right|^2d\omega \le M_3^2.$$ Thus, 
\begin{align}
		\norm{  \mathcal{P}_\ep(u)(x,.)-u(x,.) }^2_{H^p(\mathbb{R})}  \le   M_3^2 \left(1+\epsilon^{-2} \right)^p  \exp\left(2(x- \gamma-1)\ep^{-{\frac{\alpha  }{2}}} \cos{\frac{\alpha \pi}{4}}\right)          . \label{pihph}
	\end{align}

\noindent {\bf Step 3:} Estimate $\displaystyle \|u_{\epsilon}^\delta(x,.)-u(x,.)\|^2_{H^p(\mathbb{R})}$. Combining (\ref{pihpc}), (\ref{pihpd}), (\ref{pihpg}), and (\ref{pihph}), we get 
\begin{align}
	  \|u_{\epsilon}^\delta(x,.)-u(x,.)\|^2_{H^p(\mathbb{R})} 
	&	\le   24 e^{2K^2 }\left(1+\epsilon^{-2} \right)^{ p}   \exp\left(2x \epsilon^{-\frac{\alpha}{2}} \cos\frac{\alpha\pi}{4}\right) \delta^2 \nn\\
	&  	+  6M_3^2e^{2K^2}\left(1+\epsilon^{-2} \right)^{ p}      \exp\Big( 2(x-\gamma-1)\ep^{-{\frac{\alpha}{2}}} \cos{\frac{\alpha \pi}{4}} \Big)  \nn\\
	&   + 3 M_3^2 \left[\left(1+\epsilon^{-2} \right)^{ p} + 1 \right]\left(1+\epsilon^{-2} \right)^{ p}  \exp\left(2(x- \gamma-1)\ep^{-{\frac{\alpha  }{2}}} \cos{\frac{\alpha \pi}{4}}\right)  . \nn
\end{align}   
The inequality  $  \left(1+\epsilon^{-2} \right)^{ p} +1 \le 2\left(1+\epsilon^{-2} \right)^{ p}$ implies that
\begin{align}
 \|u_{\epsilon}^\delta(x,.)-u(x,.)\| _{H^p(\mathbb{R})} 
&		\le  5 e^{ K^2 }\left(1+\epsilon^{-2} \right)^{ p/2}   \exp\left( x \epsilon^{-\frac{\alpha}{2}} \cos\frac{\alpha\pi}{4}\right) \delta  \nn\\
	&  	+  3 M_3 e^{ K^2}\left(1+\epsilon^{-2} \right)^{ p/2}      \exp\Big(  (x-\gamma-1)\ep^{-{\frac{\alpha}{2}}} \cos{\frac{\alpha \pi}{4}} \Big)  \nn\\
	&   + 3 M_3   \left(1+\epsilon^{-2} \right)^{ p}     \exp\left( (x- \gamma-1)\ep^{-{\frac{\alpha  }{2}}} \cos{\frac{\alpha \pi}{4}}\right) . \label{pihpi}
\end{align}
 The inequality (\ref{pihpi})  leads to 
\begin{align}
  \|u_{\epsilon}^\delta(x,.)-u(x,.)\| _{H^p(\mathbb{R})} 
&		\le  C_3 \left(1+\epsilon^{-2} \right)^{p}   \exp\left( x \epsilon^{-\frac{\alpha}{2}} \cos\frac{\alpha\pi}{4}\right) \delta \nn\\
	&  	+  C_3 \left(1+\epsilon^{-2} \right)^{p}      \exp\Big(  (x-\gamma-1)\ep^{-{\frac{\alpha}{2}}} \cos{\frac{\alpha \pi}{4}} \Big)  ,\nn
\end{align} where $C_3= \max\Big\{ 5e^{K^2};\, 3M_3\left(e^{K^2}+1\right) \Big\} $. 
\end{proof}

{\color{black}
	\begin{remark}
		In the case  {of} finite time, the problem can be solved by using the   {Fourier truncation method}. In the future, we will investigate this method to solve the problem. 
	\end{remark}
	
\begin{remark} The boundary conditions (1.3)-(1.4) are given on the left boundary. In the case  {that the boundary conditions are given on the right boundary,} i.e., $$u(1,t)=g_1(t), \quad \textrm{and} \quad  u_x(1,t)=h_1(t).$$ Multiplying the first equation of  (2.9) by $\displaystyle \frac{ \sinh {\Big (k(\omega)(z-x)\Big) }  }{k(\omega)}$    and integrating two sides on $[x;1]$, we derive
	\begin{align}
		\widehat{u}(x,\omega)&= \cosh \Big(k(\omega)(1-x)\Big) \widehat{g_1}(\omega) +  \frac{\sinh \big(k(\omega) (1-x)\Big)}{k(\omega)} \widehat{h_1}(\omega) \nonumber\\
		& -    \int_x^1 \frac{ \sinh {\Big (k(\omega)(z-x)\Big) }  }{k(\omega)} \widehat{f}(z,\omega,u(z,\omega))  dz,  \label{solutionrb} 
	\end{align}
	for $x\ge 0$, $\omega\in\mathbb{R}$. Therefore, the problem corresponding to  {the} right-boundary conditions can be treated  {in} the same way as the problem corresponding to  {the} left-boundary conditions. 
\end{remark}
}

{\color{black}
\section{Numerical example}

\noindent In this section, we present a simple numerical example  to show  {the} efficiency of the method. The  numerical  example  is  implemented  for $t\in [0,  2 \pi]$.  Consider the inverse problem (1.1)-(1.4) according to $g(t)=t^2$, $h(t)=-2t^2$ and  
\begin{align}
f(x,t,u(x,t)) =\,& \frac{u(x,t)}{1+u^2(x,t)} + \tilde{f}(x,t)  \nonumber 
\end{align}
where $$\displaystyle \tilde{f}(x,t)=e^{-2x} \left(    \frac{2 t^{2-\alpha}}{\Gamma(3-\alpha)} - \frac{ t^2}{1+e^{-4x}t^4}  - 4 t^2 \right).$$  Note that $f$  {satisfies} the Lipschitz condition
\begin{align}
	|f(x,t,v_1)-f(x,t,v_2)|\,\,\,\le\,\,\,&  |v_1-v_2|, \quad 0\le x \le 1, \quad t\in \mathbb{R}, \quad v_1, v_2\in \mathbb{R}. \nn
	\end{align}
Moreover, we can easily verify  {that}
$$\displaystyle u(x,t)=e^{-2x}t^2$$ is the exact solution of  {the} problem (1.1)-(1.4). The problem  {is} numerically solved for $x\in (0,1)$ and $t>0$. The noisy data $g^\delta$, $h^\delta$ are given by 
\begin{align}
g^\delta(t)=g(t)\left(1+ \frac{1}{\sqrt{\pi}}~\delta ~ \mathrm{rand}(\mathrm{size(.)}) \right),  \nn\\
h^\delta(t)=h(t)\left(1+ \frac{1}{\sqrt{\pi}}~\delta ~ \mathrm{rand}(\mathrm{size(.)}) \right),  \nn
\end{align} where $\delta$ is the noisy level and $\mathrm{rand}(\mathrm{size(.)})$  {is a} random matrix with elements in  $[-1;1]$. By using the formula (3.41), we can compute the regularized solution $u_\epsilon^\delta$ with respect to the noisy data $g^\delta$, $h^\delta$ and the regularized parameter $\epsilon$ ($\omega_{\max}$).    {The relative error} between the exact solution $u$ and the regularized solution  $u_\epsilon^\delta$  is  {approximated as}
\begin{align}
\mathrm{Error}  = \left( \frac{\sum_{\ell=0}^N |u(x,t_\ell)-u_\epsilon^\delta (x,t_\ell)|^2 }{\sum_{\ell=0}^N |u(x,t_\ell) |^2}  \right)^{1/2} , \nn
\end{align} for fixed space point $x\in (0,1)$. Here
\[
t_{l} = l\Delta t,\quad \Delta t=\frac{2\pi}{N},\quad l=\overline{0,N}.
\]
and we choose $N=512$. 
 Table \eqref{table1} and \eqref{table2} show   {the errors} between  {the} exact solution and the regularized solution  for $\alpha=0.4$ and $\alpha=0.7${.} We can see that the errors are increasing  {when} $\omega_\mathrm{max}$ and $x$  {are} increasing. So the regularized parameter should  {be chosen} larger  {to} get more  {exact results}.  {The figures \ref{figures1} show the regularized solutions for some values of the noisy level $\delta$}. These images show that the errors are  {smaller} when the noisy level $\delta$  {is} smaller.

\begin{table}
		\noindent \begin{centering}
		\caption{  The relative error between $u$ and $u^{\epsilon}_{\omega}$ for $\alpha = 0.4$ with $\omega_{\max1} = 16.9339$, $\omega_{\max2} = 20.9183$, $\omega_{\max3} = 24.9027$ and $\omega_{\max4} = 31.8755$.
			\label{table1}}
			\begin{tabular}{ |l|l|l|l|l|l|l|l|l|l| }
				\hline
				\multicolumn{1}{ |c| }{$x$}  &\multicolumn{4}{|c|}{ $\|u-u_{\alpha}^{\epsilon}\|$ }  \\ \hline
				&  $\omega_{\max1}$ & $\omega_{\max2}$ &  $\omega_{\max3}$ & $\omega_{\max4}$    \\  \hline
				{\tt x = 0.15} & {\tt     0.1378}	  &  {\tt 0.0941}	   & {\tt 0.0756}	  & {\tt 0.0580}	 \\ \hline 
				{\tt x = 0.25} & {\tt     0.1581}	  &  {\tt 0.1015}	  &  {\tt 0.0821} &	    {\tt 0.0639} \\ \hline
				{\tt x = 0.35}&{\tt     0.1472}	  &  {\tt 0.1157}	  &  {\tt 0.0942}	 &   {\tt 0.0758}\\ \hline
				{\tt x = 0.45}&{\tt     0.1869} &	 {\tt   0.1386} &	    {\tt 0.1138} &	 {\tt    0.0966} \\ \hline
				{\tt x = 0.55} & {\tt     0.2160}	 &   {\tt 0.1688}	  &  {\tt 0.1411}	   & {\tt 0.1270}  \\ \hline
				{\tt x = 0.65} & {\tt    0.2396}	  &  {\tt 0.2089}	  &  {\tt 0.1729}	  &  {\tt 0.1631} \\ \hline
				{\tt x = 0.75} & {\tt    0.2729	}  &  {\tt 0.2478}	  &  {\tt 0.2034}	 & {\tt  0.1971
				} \\ \hline
				{\tt x = 0.85} & {\tt    0.2872	} &   {\tt 0.2777}	  &  {\tt 0.2268}	  & {\tt 0.2240
				} \\ \hline
				{\tt x = 0.95} & {\tt    0.3292}	  &  {\tt 0.3022}	 &   {\tt 0.2447}	  &  {\tt 0.2430} \\ \hline
			\end{tabular} \vspace*{0.3cm}
			\par \end{centering}
\end{table}
%\noindent \begin{center}
	\begin{table}
	\caption{The relative error between $u$ and $u^{\epsilon}_{\omega}$ for $\alpha = 0.7$ with $\omega_{\max1} = 16.9339$, $\omega_{\max2} = 20.9183$, $\omega_{\max3} = 24.9027$ and $\omega_{\max4} = 31.8755$.
			\label{table2}}
		\noindent \begin{centering}
			\begin{tabular}{ |l|l|l|l|l|l|l|l|l|l| }
				\hline
				\multicolumn{1}{ |c| }{$x$}  &\multicolumn{4}{|c|}{ $\|u-u_{\alpha}^{\epsilon}\|$ }  \\ \hline
				&  $\omega_{\max1}$ & $\omega_{\max2}$ &  $\omega_{\max3}$ & $\omega_{\max4}$    \\  \hline
				{\tt x = 0.15} & {\tt     0.1358}	  &  {\tt 0.0996}	   & {\tt 0.0805}	  & {\tt 0.0622}	 \\ \hline 
				{\tt x = 0.25} & {\tt         0.1562}	  &  {\tt 0.1244}	  &  {\tt 0.1024}	  &  {\tt 0.0817} \\ \hline
				{\tt x = 0.35}&{\tt         0.2175}	&   {\tt 0.1779	}  &  {\tt 0.1502}	 &   {\tt 0.1255 }\\ \hline
				{\tt x = 0.45}&{\tt         0.2891}	 &   {\tt 0.2620}	 &   {\tt 0.2249}	 &   {\tt 0.1946} \\ \hline
				{\tt x = 0.55} & {\tt         0.3785}	 &   {\tt 0.3583}	 &   {\tt 0.3032	}  &  {\tt 0.2690}  \\ \hline
				{\tt x = 0.65} & {\tt        0.4551}	  &  {\tt 0.4230}	  &  {\tt 0.3585}	  &  {\tt 0.3245} \\ \hline
				{\tt x = 0.75} & {\tt        0.5118	}  &  {\tt 0.4682}	  &  {\tt 0.3916}	  &  {\tt 0.3604} \\ \hline
				{\tt x = 0.85} & {\tt        0.5652}	  &  {\tt 0.4989}	  &  {\tt 0.4153}	  &  {\tt 0.3874} \\ \hline
				{\tt x = 0.95} & {\tt        0.5902	}  &  {\tt 0.5256}	  &  {\tt 0.4372}	 &   {\tt 0.4127} \\ \hline
			\end{tabular}
			\par \end{centering}\vspace*{0.3cm}
	\end{table}
%	\par \end{center}
%%%%%%%%%%%%%%%%%%%%%%%%%%%%%%%%%%%%%%%%%%%%%%%%%%%%%%%%%%%%%%%%%%%%%%%%%
%\clearpage
\begin{figure}[h!]
	\begin{center}
		\begin{center}
			\subfigure[$\delta_{1} = 0.5775$]{	\includegraphics[scale=0.42]{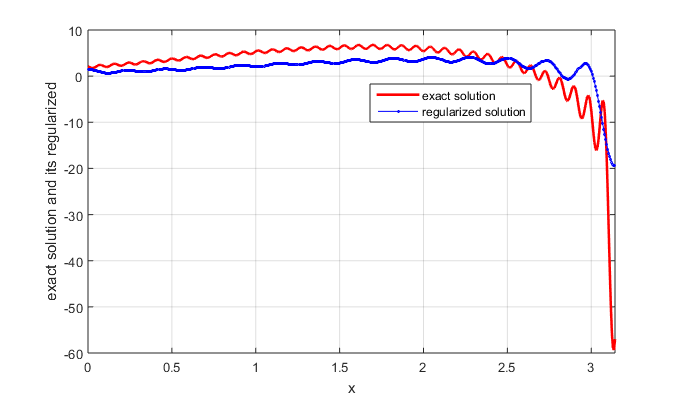}} 
			\subfigure[$\delta_{2} = 0.0543$]{	\includegraphics[scale=0.42]{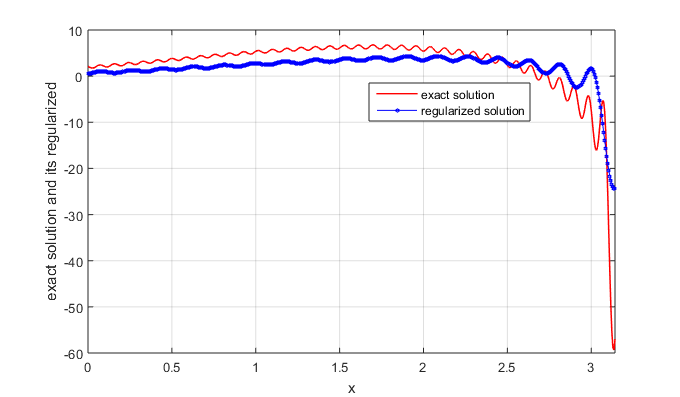} }
			\subfigure[$\delta_{3} = 0.0057$]{	\includegraphics[scale=0.42]{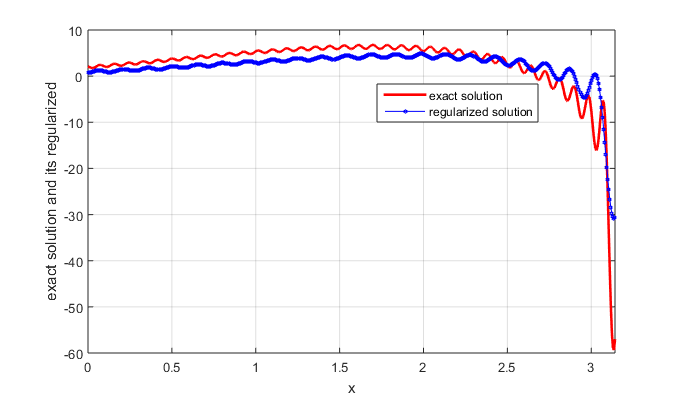} }
			\subfigure[$\delta_{4} = 0.0005$]{	\includegraphics[scale=0.42]{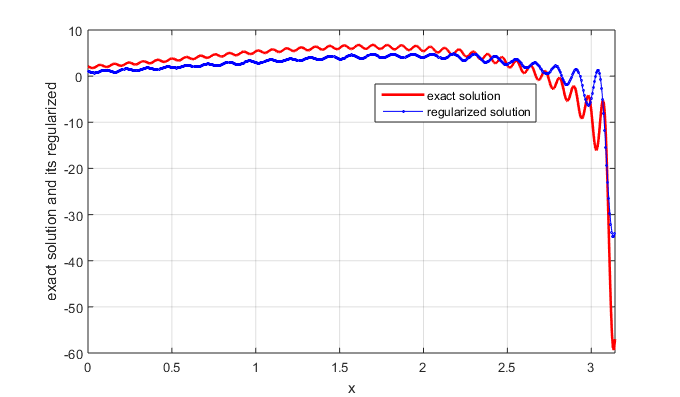} }
		\end{center}
		\caption{A comparison between the exact solution and its computed regularization solution corresponding to $\delta$ .}
		\label{figures1}
	\end{center}	
\end{figure}

\subsection*{Acknowledgements}
This research was supported by Vietnam National Foundation for Science and Technology
Development (NAFOSTED) under grant number 101.02-2019.09. 
The authors would like to thank the editor and two referees
for their  comments and corrections that improved this article.
The author gratefully acknowledge stimulating discussions on numerical results with Le Dinh Long.

\end{document}